\documentclass{amsart}
\usepackage{amsthm,amsmath,amssymb}
\usepackage{mathrsfs}
\usepackage{color}
\usepackage{tikz}
\usepackage{enumitem}
\usepackage{amsmath, bm, bbm}
\usepackage{graphicx}
\usetikzlibrary{lindenmayersystems}

\vfuzz2pt 
\hfuzz2pt 
\newtheorem{theorem}{Theorem}[section]
\newtheorem{corollary}[theorem]{Corollary}
\newtheorem{lemma}[theorem]{Lemma}
\newtheorem{proposition}[theorem]{Proposition}
\theoremstyle{definition}
\newtheorem{definition}[theorem]{Definition}
\theoremstyle{remark}
\newtheorem{remark}[theorem]{Remark}
\numberwithin{equation}{section}



\title{random $\beta$-transformation on fat Sierpinski gasket}
\author{Tingyu Zhang\textsuperscript{1}, Karma Dajani\textsuperscript{2} and Wenxia Li\textsuperscript{3}}
\thanks{1,3: Department of Mathematics, Shanghai Key Laboratory of PMMP, East China Normal
University, Shanghai 200241, People's Republic of China.}
\thanks{2: Department of Mathematics, Utrecht University, Fac Wiskunde en informatica and MRI, Budapestlaan 6, P.O. Box 80.000, 3508 TA Utrecht, the Netherlands.}
\thanks{Email addresses: tingyuzhangecnu@163.com(T. Zhang), k.dajani1@uu.nl(K. Dajani), \\ wxli@math.ecnu.edu.cn(W. Li).}

\begin{document}
\maketitle

\begin{abstract}
We consider the iterated function system (IFS)
$$f_{\vec{q}}(\vec{z})=\frac{\vec{z}+\vec{q}}{\beta},\vec{q}\in\{(0,0),(1,0),(0,1)\}.$$
As is well known, for $\beta = 2$ the attractor, $S_\beta$, is a fractal called the Sierpi\'nski gasket(or sieve) and for $\beta>2$ it is also a fractal. Our goal is to study greedy, lazy and random $\beta$-transformations on the attractor for this IFS with $1<\beta<2$.
For $1<\beta\leq 3/2$, $S_\beta$ is a triangle and
it is shown that the greedy transformation $T_\beta$ and the lazy transformation $L_\beta$ are isomorphic and they both admit an absolutely continuous invariant measure.
We show that all $\beta$-expansions of a point $\vec{z}$ in $S_\beta$ can be generated by a random map $K_\beta$ defined on $\{0,1\}^\mathbb{N}\times\{0,1,2\}^\mathbb{N}\times S_\beta$
and $K_\beta$ has a unique invariant measure of maximal entropy when $1<\beta\leq\beta_*$,
where $\beta_*\approx 1.4656$ is the root of $x^3-x^2-1=0$.
We also show existence of a $K_\beta$-invariant probability measure, absolutely continuous with respect to $m_1\otimes m_2 \otimes \lambda_2$,
where $m_1, m_2$ are product measures on $\{0,1\}^\mathbb{N},\{0,1,2\}^\mathbb{N}$, respectively,
and $\lambda_2$ is the normalized Lebesgue measure on $S_\beta$.
For $3/2<\beta\leq \beta^*$, where $\beta^*\approx 1.5437$ is the root of $x^3-2x^2+2x=2$, there are radial holes in $S_\beta$.
In this case, $K_\beta$ is defined on $\{0,1\}^\mathbb{N}\times S_\beta$.
We also show that it has a unique invariant measure of maximal entropy.

~\\
\textbf{MSC2020:}{ 37H12, 37A44, 37A05, 11K55}\\
\textbf{Keywords:} fat Sierpinski gasket, random $\beta$-transformation, absolutely continuous invariant measures, measures of maximal entropy.
\end{abstract}

\section{Introduction}
Let $\beta>1$ and consider the $iterated\ function\ system$(IFS):
\begin{equation*}
f_{\vec{q}_0}(\vec{z})=\frac{\vec{z}+\vec{q}_0}{\beta},f_{\vec{q}_1}(\vec{z})=\frac{\vec{z}+\vec{q}_1}{\beta},f_{\vec{q}_2}(\vec{z})=\frac{\vec{z}+\vec{q}_2}{\beta},
\end{equation*}
 where the coordinates of the three points $\vec{q}_0,\vec{q}_1,\vec{q}_2$ are $(0,0),(1,0),(0,1)$, respectively.
It is well known that there exists a unique non-empty compact set $S_\beta\subset \mathbb{R}^2$ such that
$S_\beta=\cup_{i=0}^2f_{\vec{q}_i}(S_\beta)$; see \cite{F} for further details. The $attractor$ for the IFS, $S_\beta$, is a Sierpinski gasket. Denote the convex hull of $S_\beta$ by $\Delta$ which is a triangle with vertices at $(0,0),(\frac{1}{\beta-1},0)$ and $(0,\frac{1}{\beta-1})$. For every point $\vec{z}\in S_\beta$, there exists a sequence $(a_i)_{i=1}^\infty \in\{\vec{q}_0,\vec{q}_1,\vec{q}_2\}^\mathbb{N}$ such that
$$\vec{z}=\lim_{n\rightarrow\infty}f_{a_1}\circ\cdots\circ f_{a_n}(\vec{q}_0)=\sum_{i=1}^\infty \frac{a_i}{\beta^{i}}.$$
We call $(a_i)_{i=1}^\infty $ a $coding$ of $\vec{z}$ and $\sum_{i=1}^\infty a_i\beta^{-i}$ a $representation$ of $\vec{z}$ in base $\beta$.

Let $i\in \{0,1,2\}$.
For $\beta>2$, the images $f_{\vec{q}_i}(\Delta)$ are disjoint.
In the case the IFS $\{f_{\vec{q}_i}\}$ satisfies the strong separation condition and each point in $S_\beta$ has a unique coding.
For $\beta=2$, the sets $f_{\vec{q}_i}(\Delta)$ overlap only at the vertices.
Therefore only countably many points in $S_\beta$ have two codings, and all other points have a unique coding.

When $\beta\in(1,2)$, we call $S_\beta$ a $fat\ Sierpinski\ gasket$.
For $1<\beta\leq 3/2$, we have a non-empty triple overlap $C_{012}:=f_{\vec{q}_0}(\Delta)\cap f_{\vec{q}_1}(\Delta)\cap f_{\vec{q}_1}(\Delta)$(see Figure \ref{P1}),
and $S_\beta=\Delta$. In this case Lebesgue almost every point in $S_\beta$ has a continuum of
codings (see \cite[Theorem3.5]{S2007}).
For $3/2<\beta< 2$, there are holes in $S_\beta$ as well as overlaps (see Figure \ref{radial} for example), which make its structure more complex. In \cite{BMS}, Broomhead et al. described two special types of structures: those in which holes are radially distributed and those that are totally self-similar.
We are more interested in $S_\beta$ with the former characteristics. For more results of the Hausdorff dimension of the attractors, see \cite{KL,HP,SS,JP,H}.

In this article, we shall consider the random transformation on the two-dimensional Sierpinski gasket $S_\beta$.
The main motivation of this consideration is from the random $\beta$-expansion in an interval.
In \cite{DK2003}, Dajani and Kraaikamp introduced a random $\beta$-transformation $K_\beta$ on
$\{0,1\}^\mathbb{N}\times[0,[\beta]/(\beta-1)]$ associated with the `greedy' map and the `lazy' map.
They showed that the greedy expansion is isomorphic to the lazy expansion(see \cite{DK2002}).
Dajani and de Vries\cite{DV2005} showed that $K_\beta$ has a unique measure of maximal entropy.
In \cite{DV2007} they also proved the existence and uniqueness of a $K_\beta$-invariant measure, absolutely continuous with respect to $m_p\otimes \lambda$, where $m_p$ is the Bernoulli measure on $\{0, 1\}^\mathbb{N}$ and  $\lambda$ is the normalized Lebesgue measure on $[0,[\beta]/(\beta-1)]$. Inspired by these results, we consider transformations which are defined on fat Sierpinski triangles with overlapping structures.

The rest of the article is organized as follows.
We suppose $\beta\in(1,3/2]$ in sections 3, 4 and 5.
In section 2, we focus on some basic concepts and notations needed in the rest of the paper.
In section 3, we introduce the greedy and lazy transformations on $S_\beta$ and prove that they are isomorphic.
We also prove the existence of invariant measures that are absolutely continuous with respect to Lebesgue measure (acim) for these two transformations.
In section 4, we give the definition of the random transformation $K_\beta$ on $\{0,1\}^\mathbb{N}\times\{0,1,2\}^\mathbb{N}\times S_\beta$
and prove that it has a unique invariant measure of maximal entropy
for $1<\beta\leq\beta_*$, where $\beta_*\approx 1.4656$ is the root of $x^3-x^2-1=0$.
In section 5, we give a position dependent random map $R$ on $S_\beta$.
With two skew product transformations, we establish a connection between $R$ and $K_\beta$,
and finally prove that $K_\beta$ has an invariant measure of the form $m_1\otimes m_2 \otimes \mu_\beta$,
where $m_1$ is the product measure on $\{0,1\}^\mathbb{N}$ with weights $\{p,1-p\}$,
$m_2$ is the product measure on $\{0,1,2\}^\mathbb{N}$ with weights $\{s,t,1-s-t\}$,
and $\mu_\beta$ is $R$-invariant and absolutely continuous with respct to $\lambda_2$, the normalized Lebesgue measure on $S_\beta$.
In section 6, we modify the definition of the random transformation $K_\beta$ for $3/2<\beta\leq\beta^*$ and prove that it also has a unique invariant measure of maximal entropy, where $\beta^*\approx 1.5437$ is the root of $x^3-2x^2+2x=2$.
\section{preliminary}
Given $1<\beta<2$, recall that the fat Sierpinski gasket $S_\beta$ is the self-similar set in $\mathbb{R}^2$ generated
by the IFS

\begin{equation}\label{IFS}
f_{\vec{q}_0}(\vec{z})=\frac{\vec{z}+\vec{q}_0}{\beta},f_{\vec{q}_1}(\vec{z})=\frac{\vec{z}+\vec{q}_1}{\beta},f_{\vec{q}_2}(\vec{z})=\frac{\vec{z}+\vec{q}_2}{\beta},
\end{equation}
 where $\vec{q}_0,\vec{q}_1,\vec{q}_2$ are $(0,0),(1,0),(0,1)$, respectively.
For every point $\vec{z}\in S_\beta$, there exists a sequence $(a_i)_{i=1}^\infty \in\{\vec{q}_0,\vec{q}_1,\vec{q}_2\}^\mathbb{N}$ such that
$$\vec{z}=\sum_{i=1}^\infty \frac{a_i}{\beta^{i}}.$$
Recall that $\Delta$ is the convex hull of $S_\beta$ which is an isosceles right triangle.

\medskip
Consider the ordering of points on the plane. We write $(x_1,y_1)<(x_2,y_2)$ if $x_1+y_1< x_2+y_2$, or $x_1+y_1= x_2+y_2$ and $ y_1<y_2$.
Notice that $\vec{q}_0 < \vec{q}_1< \vec{q}_2$.

\medskip
Let $ \Omega= \{0,1\}^\mathbb{N}$ with the product $\sigma$-algebra $\mathcal{A}$ and $\Upsilon=\{0,1,2\}^\mathbb{N}$ with the product $\sigma$-algebra $\mathcal{B}$.
Define metrics $d$ and $\rho$ on $\Omega$ and $\Upsilon$ respectively by
$$ d((\omega_i),(\omega'_i))=2^{-min\{k:\omega_k\neq \omega'_k\}} $$
and
$$ \rho((\upsilon_i),(\upsilon'_i))=3^{-min\{k:\upsilon_k\neq \upsilon'_k\}} .$$

Throughout the article, the lexicographical ordering on $\Omega$, $\Upsilon$ and $\{\vec{q}_0,\vec{q}_1,\vec{q}_2\}^\mathbb{N}$ are all denoted by $\prec$ and $\preceq$.
More precisely, for two sequences $(c_i), (d_i)$ we write $(c_i) \prec (d_i)$
if $c_1 < d_1$, or there exists $k \geq 2$ such that $c_i = d_i$ for all $1 \leq i < k$ and $c_k < d_k$.
Similarly, we write $(c_i) \preceq  (d_i)$ if $(c_i) \prec (d_i)$ or $(c_i) = (d_i)$.

We will use the following concepts and properties.
\begin{definition}\cite{SSM}
A topological space X is called $separable$ if it has a countable dense set.
A topological space is called $completely\ metrizable$ if its topology is
induced by a complete metric.
A $Polish\ space$ is a separable, completely metrizable topological space.
\end{definition}
There are some elementary observations.
\begin{itemize}[leftmargin=*]
\item[] (i) The real line $\mathbb{R}$ with the usual topology is Polish.
\item[] (ii) Any countable discrete space is Polish. In particular, $\{0, 1,2\}$ with the discrete topology is Polish.
\item[] (iii) The product of countably many Polish spaces is Polish. In particular,
$\Omega=\{0,1\}^\mathbb{N}$ and $\Upsilon=\{0,1,2\}^\mathbb{N}$ are Polish.
\end{itemize}
\begin{theorem}\cite{SSM}\label{polish}
Let $X, Y$ be Polish spaces, $A$ a Borel subset of $X$, and
$f : A\rightarrow Y$ a one-to-one Borel map. Then $f(A)$ is Borel.
\end{theorem}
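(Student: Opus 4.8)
The statement is the classical Lusin--Souslin theorem, so the plan is to reproduce its standard descriptive--set--theoretic proof, whose engine is the Lusin separation theorem. The first move is a reduction to a continuous injection on a Polish domain. Using the change--of--topology results for Polish spaces, I would refine the topology of $X$ so that the Borel set $A$ becomes clopen (hence $A$ is itself Polish) while leaving the Borel $\sigma$-algebra unchanged, and then refine the topology of $A$ once more so that the Borel map $f$ becomes continuous, again without altering the Borel sets. After these reductions we may assume $Z:=A$ is a Polish space and $f\colon Z\to Y$ is a continuous injection; since only the topology of the domain was changed, showing that $f(Z)$ is Borel is exactly the desired conclusion.

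Next I would record the two imported facts. First, a continuous image of a Polish (equivalently, Borel) set is analytic; in particular $f(B)$ is analytic for every Borel $B\subseteq Z$. Second, the Lusin separation theorem together with its routine iteration: any sequence of pairwise disjoint analytic sets $(A_n)$ in a Polish space admits pairwise disjoint Borel supersets $(C_n)$ with $A_n\subseteq C_n$. This separation principle is the crucial tool that upgrades ``analytic'' to ``Borel'' once disjointness is available, and it is exactly where the injectivity of $f$ will be exploited.

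The heart of the argument is the construction of a Suslin (Lusin) scheme. Fixing a complete compatible metric on $Z$ and using continuity of $f$, I would build Borel sets $B_s$ indexed by finite strings $s\in\mathbb{N}^{<\mathbb{N}}$ with $B_{\varnothing}=Z$, with $B_s=\bigsqcup_n B_{s^{\frown}n}$ a disjoint union, and with both $\mathrm{diam}(B_s)\le 2^{-|s|}$ in $Z$ and $\mathrm{diam}(f(B_s))\le 2^{-|s|}$ in $Y$; the second bound is achievable precisely because $f$ is continuous, so one partitions $Z$ into small Borel pieces whose $f$-images are also small. Because $f$ is injective and the $B_{s^{\frown}n}$ are disjoint, the analytic sets $f(B_{s^{\frown}n})$ are pairwise disjoint, so the separation principle yields Borel sets $C_s$ with $f(B_s)\subseteq C_s\subseteq\overline{f(B_s)}$, with siblings $C_{s^{\frown}n}$ pairwise disjoint and, after intersecting, $C_{s^{\frown}n}\subseteq C_s$. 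The diameter control then forces $\mathrm{diam}(C_s)\le 2^{-|s|}$.

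Finally I would prove the Borel identity $f(Z)=\bigcap_{n}\bigcup_{|s|=n}C_s$, which exhibits $f(Z)$ as Borel. The inclusion $\subseteq$ is immediate from $f(Z)=\bigcup_{|s|=n}f(B_s)$. For $\supseteq$, a point $y$ in the right-hand side lies in some $C_s$ at each level $n$; the disjointness of siblings forces these to lie along a single infinite branch $\alpha$, and completeness together with $\mathrm{diam}(B_{\alpha|n})\to 0$ gives a unique $x\in\bigcap_n\overline{B_{\alpha|n}}$, while $\mathrm{diam}(C_{\alpha|n})\to 0$ forces $y=f(x)\in f(Z)$. I expect the main obstacle to be precisely this step: guaranteeing the simultaneous diameter control in both spaces and then verifying that disjointness collapses the otherwise merely analytic projection into the countable Borel combination above. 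This is the point at which the separation theorem and the injectivity of $f$ together do the real work, turning a Suslin operation into a genuine Borel set.
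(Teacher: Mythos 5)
This statement is imported by the paper from the cited reference (Srivastava, \emph{A Course on Borel Sets}) without any proof being given, so there is nothing internal to compare against; your argument is the standard Lusin--Souslin proof (change of topology to make $A$ clopen and $f$ continuous, a Luzin scheme with diameter control in both spaces, the iterated Lusin separation theorem applied to the disjoint analytic images, and the identity $f(Z)=\bigcap_n\bigcup_{|s|=n}C_s$), which is exactly the proof found in the cited source. The sketch is correct as it stands.
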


\begin{definition}\cite{DC}
$A\ dynamical\ system$ is a quadruple $(X,\mathcal{F},\mu,Q)$,
where $X$ is a non-empty set, $\mathcal{F}$ is a $\sigma$-algebra on $X$, $\mu$ is a probability
measure on $(X,\mathcal{F}$) and $Q: X\rightarrow X$ is a surjective $\mu$-measure preserving
transformation.
\end{definition}
If $(X,\mathcal{F},\mu,Q)$ is a dynamical system, and $x \in X$, we call the
sequence
$$x,Qx, \ldots,Q^nx,\ldots$$
the $Q$-orbit of $x$.
\begin{definition}\cite{DC}
Two dynamical systems $(X,\mathcal{F},\mu,Q)$ and $(Y,\mathcal{G},\nu,U)$ are \textit{isomorphic} if there exist measurable sets $N \subset X$ and $M\subset Y$ with $\mu(N) =
0 =\nu(M)$ and $Q(X \setminus N)\subset X \setminus N, U(Y \setminus M) \subset Y \setminus M,$ and finally if there exists a measurable map $\psi: X \setminus N \rightarrow Y \setminus M$ such that (i)–(iv) are
satisfied for the systems restricted to $X \setminus N$ and $Y \setminus M$:\\
(i) $\psi$ is one-to-one and onto,\\
(ii) $\psi$ is measurable, i.e., $\psi^{-1}(G) \in \mathcal{F}$ for all $G\in\mathcal{G}$,\\
(iii) $\psi$ preserves the measures, i.e., $\nu(G) =\mu(\psi^{-1}(G))$,\\
(iv) $\psi$ preserves the dynamics of $Q$ and $U$, i.e., $\psi\circ Q = U \circ \psi$,\\
 The map $\psi$ is called an $isomorphism$.
\end{definition}
\begin{proposition}\cite{DC}
Entropy is isomorphism invariant.
\end{proposition}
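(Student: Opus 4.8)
The plan is to prove the sharper, quantitative statement that the Kolmogorov--Sinai (measure-theoretic) entropy $h_\mu(Q)$ is preserved by any isomorphism, which immediately yields the proposition. Recall that for a finite measurable partition $\mathcal{P}$ of $X$ one sets $H_\mu(\mathcal{P})=-\sum_{P\in\mathcal{P}}\mu(P)\log\mu(P)$, defines the dynamical entropy of the partition by
$$h_\mu(Q,\mathcal{P})=\lim_{n\to\infty}\frac{1}{n}H_\mu\Big(\bigvee_{i=0}^{n-1}Q^{-i}\mathcal{P}\Big),$$
and then sets $h_\mu(Q)=\sup_{\mathcal{P}}h_\mu(Q,\mathcal{P})$, the supremum ranging over all finite partitions. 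The idea is to use the isomorphism $\psi:X\setminus N\to Y\setminus M$ to transport partitions between the two systems and to verify that every ingredient of the definition is left unchanged by this transport.

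First I would fix a finite partition $\mathcal{P}=\{P_1,\dots,P_k\}$ of $Y\setminus M$ and form its pullback $\psi^{-1}\mathcal{P}=\{\psi^{-1}P_1,\dots,\psi^{-1}P_k\}$, which is a finite measurable partition of $X\setminus N$ because $\psi$ is a measurable bijection. Two elementary facts drive the argument. By the measure-preservation property (iii), $\mu(\psi^{-1}P_j)=\nu(P_j)$ for every $j$, so $H_\mu(\psi^{-1}\mathcal{P})=H_\nu(\mathcal{P})$. By the conjugacy property (iv), $\psi\circ Q=U\circ\psi$ gives $\psi^{-1}\circ U^{-i}=Q^{-i}\circ\psi^{-1}$, so the pullback commutes with refinement:
$$\psi^{-1}\Big(\bigvee_{i=0}^{n-1}U^{-i}\mathcal{P}\Big)=\bigvee_{i=0}^{n-1}Q^{-i}\big(\psi^{-1}\mathcal{P}\big).$$
Applying the first fact to this common refinement yields $H_\mu\big(\bigvee_{i=0}^{n-1}Q^{-i}(\psi^{-1}\mathcal{P})\big)=H_\nu\big(\bigvee_{i=0}^{n-1}U^{-i}\mathcal{P}\big)$ for every $n$; dividing by $n$ and letting $n\to\infty$ gives $h_\mu(Q,\psi^{-1}\mathcal{P})=h_\nu(U,\mathcal{P})$.

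Taking the supremum over all finite partitions $\mathcal{P}$ of $Y$ and noting that each $\psi^{-1}\mathcal{P}$ is one particular partition of $X$ gives
$$h_\mu(Q)=\sup_{\mathcal{Q}}h_\mu(Q,\mathcal{Q})\ \geq\ \sup_{\mathcal{P}}h_\mu(Q,\psi^{-1}\mathcal{P})=\sup_{\mathcal{P}}h_\nu(U,\mathcal{P})=h_\nu(U).$$
For the reverse inequality I would invoke the symmetry of the isomorphism relation: $\psi^{-1}:Y\setminus M\to X\setminus N$ is itself an isomorphism, so running the identical argument with the roles of $(X,Q)$ and $(Y,U)$ exchanged gives $h_\nu(U)\geq h_\mu(Q)$, and hence equality. (Alternatively, since $\psi$ is a bijection modulo null sets, $\mathcal{P}\mapsto\psi^{-1}\mathcal{P}$ is already a bijection between the partitions of the two spaces, so the two suprema coincide outright.) The only point requiring genuine care---and the main technical obstacle---is the bookkeeping with the invariant null sets $N$ and $M$: one must confirm that discarding them affects none of the entropies, which holds because $\mu(N)=\nu(M)=0$ and because the hypotheses $Q(X\setminus N)\subset X\setminus N$ and $U(Y\setminus M)\subset Y\setminus M$ ensure the restricted systems are well defined and share the entropy of the originals.
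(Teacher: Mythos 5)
Your argument is correct and is the standard textbook proof of this fact; the paper itself offers no proof, simply citing Dajani--Kraaikamp, and the intended argument is exactly the partition-transport computation you carry out (pull back partitions, use measure preservation for $H_\mu(\psi^{-1}\mathcal{P})=H_\nu(\mathcal{P})$, use the conjugacy to commute pullback with the join, pass to the supremum, and symmetrize). The only point worth flagging is that the symmetry step requires $\psi^{-1}$ to be measurable, which the paper's restated definition of isomorphism omits but which is part of the standard definition (and is automatic for injective Borel maps between Polish spaces by the Lusin--Souslin-type theorem the paper itself quotes).
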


\section{greedy and lazy transformation for $1<\beta\leq 3/2$ }
Let $1<\beta\leq 3/2$, then $S_\beta=\Delta$. Divide $S_\beta$ into the following sets according to the overlapping structure of $f_{\vec{q}_i}(S_\beta)$(see Figure \ref{P1}).
\begin{equation}\label{partition}
\begin{split}
&E_0=[0,\frac{1}{\beta})\times [0,\frac{1}{\beta}),\\
&E_1=\{(x,y):0 \leq y < \frac{1}{\beta},  \frac{1}{\beta(\beta-1)} < x+y  \leq  \frac{1}{\beta-1}\},\\
&E_2=\{(x,y):0 \leq x < \frac{1}{\beta},  \frac{1}{\beta(\beta-1)}<x+y \leq \frac{1}{\beta-1}\},\\
&C_{01}=\{(x,y):x \geq \frac{1}{\beta},0 \leq y < \frac{1}{\beta},   x+y\leq \frac{1}{\beta(\beta-1)}\},\\
&C_{12}=\{(x,y):x\geq \frac{1}{\beta},  y\geq \frac{1}{\beta},  \frac{1}{\beta(\beta-1)} < x+y \leq \frac{1}{\beta-1}\},\\
&C_{02}=\{(x,y):0 \leq x <  \frac{1}{\beta}, y\geq \frac{1}{\beta}, x+y \leq \frac{1}{\beta(\beta-1)}\},\\
&C_{012}=\{(x,y): x\geq \frac{1}{\beta},  y\geq \frac{1}{\beta}, x+ y \leq \frac{1}{\beta(\beta-1)}\}.
\end{split}
\end{equation}
Notice that $C_{012}$ is a single point set if $\beta=3/2$.
Let
$$C=C_{01}\cup C_{12}\cup C_{02},\ \ E=\cup_{i=0}^2E_i.$$

\begin{definition}\label{Tmap}
The greedy transformation $T_\beta$ from $S_\beta$ into $S_\beta$ is given by
\begin{equation}\label{T1}
T_\beta(\vec{z})=
\begin{cases}
\beta \vec{z}, &\mbox{if }\vec{z}\in E_0,\\
\beta \vec{z}-\vec{q}_1, &\mbox{if }\vec{z}\in C_{01}\cup E_1,\\
\beta \vec{z}-\vec{q}_2,  &\mbox{if }\vec{z}\in C_{012}\cup C_{12}\cup C_{02}\cup E_2.
\end{cases}
\end{equation}
The lazy transformation $L_\beta$ from $S_\beta$ into $S_\beta$ is given by
\begin{equation}\label{L}
L_\beta(\vec{z})=
\begin{cases}
\beta \vec{z}, &\mbox{if }\vec{z}\in C_{012}\cup C_{01}\cup C_{02}\cup E_0,\\
\beta \vec{z}-\vec{q}_1, &\mbox{if }\vec{z}\in  C_{12}\cup E_1,\\
\beta \vec{z}-\vec{q}_2,  &\mbox{if }\vec{z}\in E_2.
\end{cases}
\end{equation}
\end{definition}

\begin{figure}[h]
\begin{tikzpicture}[scale=2.6,very thick]
\pgfsetfillopacity{0.6}
\fill[fill=blue!50,draw=black,thin] (0,0) -- (64/33,0)node[below]{$\frac{1}{\beta(\beta-1)}$}-- (0,64/33)node[left]{$\frac{1}{\beta(\beta-1)}$}--cycle;
\fill[fill=blue!50,draw=black,thin] (8/11,0)node[below]{$\frac{1}{\beta}$}--(8/3,0)node[below]{$\frac{1}{\beta-1}$}--(8/11,64/33)--cycle;
\fill[fill=blue!50,draw=black,thin] (0,8/11)node[left]{$\frac{1}{\beta}$} -- (64/33,8/11)-- (0,8/3)node[left]{$\frac{1}{\beta-1}$}--cycle;
\draw [->] (-0.3,0) -- (3,0) node[at end, below] {$x$};
\draw [->] (0,-0.15) -- (0,3) node[at end, left] {$y$};
\node[below] at (1/3,1/4) {$E_0$};
\node[below] at (40/33,1/4) {$C_{01}$};
\node[below] at (24/11,1/4) {$E_1$};
\node[below] at (1/3,40/33) {$C_{02}$};
\node[below] at (40/33,40/33) {$C_{12}$};
\node[below] at (1/3,23/11) {$E_2$};
\node[below] at (29/33,31/33) {$C_{012}$};
\end{tikzpicture}
\caption{$S_\beta$ for $1<\beta\leq \frac{3}{2}$}
\label{P1}
\end{figure}
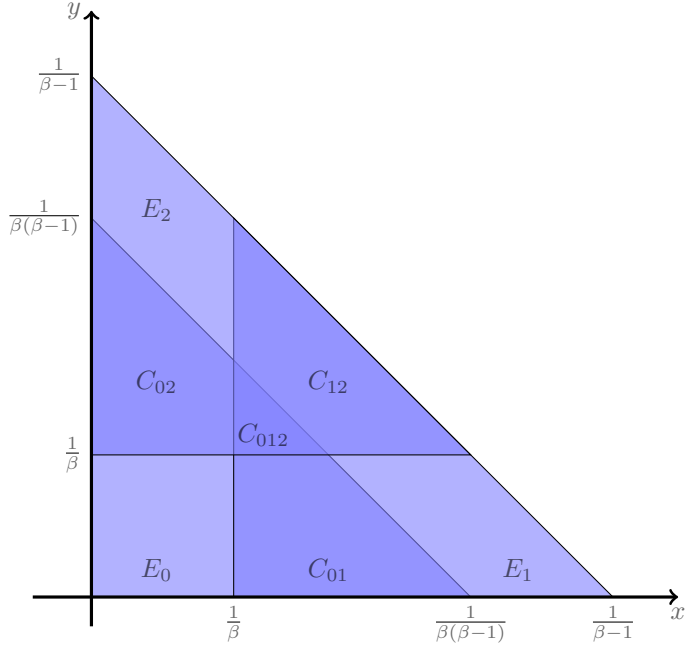
Notice that
\begin{equation*}
\begin{split}
&T_\beta|_{E_i}=L_\beta|_{E_i}=f_{\vec{q}_i}^{-1} \text{ for } i=\{0,1,2\},\\
&T_\beta|_{C_{ij}}=f_{\vec{q}_{j}}^{-1},L_\beta|_{C_{ij}}=f_{\vec{q}_i}^{-1}\text{ for } ij \in \{01,12,02\},\\
&T_\beta|_{C_{012}}=f_{\vec{q}_2}^{-1},L_\beta|_{C_{012}}=f_{\vec{q}_0}^{-1},
\end{split}
\end{equation*}
and
\begin{equation*}
\begin{split}
&f_{\vec{q}_0}(S_\beta)=E_0\cup C_{01}\cup C_{02}\cup C_{012},\\
&f_{\vec{q}_1}(S_\beta)=E_1\cup C_{01}\cup C_{12}\cup C_{012},\\
&f_{\vec{q}_2}(S_\beta)=E_2\cup C_{12}\cup C_{02}\cup C_{012}.
\end{split}
\end{equation*}
Then it is easy to verify that $T_\beta$ and $L_\beta$ are well defined.

\medskip
Denote the Borel $\sigma$-algebra on $S_\beta$ by $\mathcal{S}$. Let $\mu$ be an arbitrary $T_\beta$-invariant probability measure on $(S_\beta,\mathcal{S})$. Let $\psi: S_\beta\rightarrow S_\beta$ be given by
$$\psi(\vec{z})=\begin{bmatrix}
              1 & 0 \\
              -1 &-1 \\
    \end{bmatrix}\vec{z}+\begin{bmatrix}
               0 \\
              \frac{1}{\beta-1} \\
    \end{bmatrix}.$$
\begin{theorem}\label{iso}
For $\beta\in(1,3/2]$, the systems $(S_\beta, \mathcal{S},\mu,T_\beta)$ and $(S_\beta, \mathcal{S},\nu,L_\beta)$ are isomorphic, where $\nu=\mu\circ\psi^{-1}.$
\end{theorem}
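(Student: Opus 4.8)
The plan is to exhibit $\psi$ itself as the required isomorphism, with empty exceptional sets $N=M=\emptyset$, so that conditions (i)–(iv) can be checked directly. Writing $\psi(\vec z)=M\vec z+\vec b$ with
\[
M=\begin{bmatrix}1&0\\-1&-1\end{bmatrix},\qquad \vec b=\begin{bmatrix}0\\ \tfrac{1}{\beta-1}\end{bmatrix},
\]
one has $M^2=I$ and $M\vec b+\vec b=\vec 0$, so $\psi\circ\psi=\mathrm{id}$ and $\psi=\psi^{-1}$. Moreover $\psi$ permutes the vertices of $\Delta$ (it swaps $(0,0)$ and $(0,\tfrac{1}{\beta-1})$ and fixes $(\tfrac{1}{\beta-1},0)$), hence maps $S_\beta=\Delta$ homeomorphically onto itself. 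In particular $\psi$ is a continuous, therefore Borel, bijection, which gives (i) and (ii) at once, while (iii) holds by the very definition $\nu=\mu\circ\psi^{-1}$.

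The heart of the matter is condition (iv), $\psi\circ T_\beta=L_\beta\circ\psi$, which I would prove to hold \emph{everywhere} on $S_\beta$. First I would record the conjugation identity for the inverse branches $f_{\vec q_i}^{-1}(\vec z)=\beta\vec z-\vec q_i$. A direct matrix computation, using $M^2=I$ and $(\beta-1)\vec b=\vec q_2$ (so that the affine parts match as $\vec q_{\sigma(i)}=\vec q_2+M\vec q_i$), yields
\[
\psi\circ f_{\vec q_i}^{-1}=f_{\vec q_{\sigma(i)}}^{-1}\circ\psi,\qquad \sigma:\,0\leftrightarrow 2,\ \ 1\mapsto 1.
\]
Thus conjugation by $\psi$ interchanges the digit-$0$ and digit-$2$ branches and fixes the digit-$1$ branch, which is precisely the exchange between the greedy rule (largest admissible digit) and the lazy rule (smallest admissible digit).

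Second, I would verify that $\psi$ permutes the seven defining regions compatibly with $\sigma$. Encoding each region by the sign pattern of the three conditions $x\lessgtr\tfrac1\beta$, $y\lessgtr\tfrac1\beta$, $x+y\lessgtr\tfrac{1}{\beta(\beta-1)}$, and using $\psi(x,y)=(x,\tfrac{1}{\beta-1}-x-y)$ together with the identity $\tfrac{1}{\beta-1}-\tfrac1\beta=\tfrac{1}{\beta(\beta-1)}$, one checks that $\psi$ leaves the first condition untouched and interchanges the other two (each with a reversal). Keeping track of the half-open boundaries in \eqref{partition}, this gives the exact set identities
\[
\psi(E_0)=E_2,\quad \psi(C_{01})=C_{12},\quad \psi(E_1)=E_1,\quad \psi(C_{02})=C_{02},\quad \psi(C_{012})=C_{012},
\]
with $\psi(E_2)=E_0$ and $\psi(C_{12})=C_{01}$ following since $\psi=\psi^{-1}$. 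Hence $\psi$ sends the greedy digit-$i$ region onto the lazy digit-$\sigma(i)$ region, and combining this with the branch identity gives $\psi\circ T_\beta=L_\beta\circ\psi$ on all of $S_\beta$.

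To finish, from (iv) and the $T_\beta$-invariance of $\mu$ I would deduce that $\nu=\mu\circ\psi^{-1}$ is $L_\beta$-invariant, since $\nu(L_\beta^{-1}B)=\mu(\psi^{-1}L_\beta^{-1}B)=\mu(T_\beta^{-1}\psi^{-1}B)=\mu(\psi^{-1}B)=\nu(B)$, and that $L_\beta=\psi\circ T_\beta\circ\psi^{-1}$ is surjective; so $(S_\beta,\mathcal S,\nu,L_\beta)$ is a genuine dynamical system and (i)–(iv) exhibit $\psi$ as an isomorphism. The step I expect to be the main obstacle is the \emph{exactness} of the region identities on their shared boundaries. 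Because $\mu$ is an arbitrary $T_\beta$-invariant measure, not assumed absolutely continuous, the boundary segments cannot be discarded as null sets; one must confirm that the half-open conventions in \eqref{partition} are preserved by $\psi$ and that the digit assignment agrees identically, rather than only Lebesgue-almost everywhere.
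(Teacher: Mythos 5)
Your proposal is correct and follows essentially the same route as the paper: verify that the involution $\psi$ permutes the seven regions ($E_0\leftrightarrow E_2$, $C_{01}\leftrightarrow C_{12}$, fixing $E_1$, $C_{02}$, $C_{012}$) and conjugates each greedy branch to the corresponding lazy branch, yielding $\psi\circ T_\beta=L_\beta\circ\psi$ everywhere. Your factoring of the computation through the single identity $\psi\circ f_{\vec q_i}^{-1}=f_{\vec q_{\sigma(i)}}^{-1}\circ\psi$ with $\sigma:0\leftrightarrow 2$, and your explicit attention to the half-open boundary conventions (needed since $\mu$ is arbitrary), are minor organizational improvements over the paper's region-by-region coordinate check, not a different argument.
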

\begin{proof}
Denote $\begin{bmatrix}
               x' \\
              y' \\
    \end{bmatrix}=\psi(\begin{bmatrix}
               x \\
              y \\
    \end{bmatrix})$. Then
$$x'=x,\ y'=\frac{1}{\beta-1}-(x+y),\ x'+y'=\frac{1}{\beta-1}-y.$$
Therefore, we can obtain
\begin{align*}
\psi(E_0)=&\psi\{(x,y):0\leq x < \frac{1}{\beta},0\leq y < \frac{1}{\beta}\}\\
      =&\{(x',y'):0\leq x' < \frac{1}{\beta},\frac{1}{\beta-1}- \frac{1}{\beta} <x'+y' \leq \frac{1}{\beta-1}\}\\
=&\{(x',y'):0\leq x' < \frac{1}{\beta},\frac{1}{\beta(\beta-1)} <x'+y' \leq \frac{1}{\beta-1}\}\\
=&E_2.
\end{align*}
By doing the same calculation, we can also obtain
$$\psi(E_1)=E_1,
\psi(C_{01})=C_{12},
\psi(C_{02})=C_{02},
\psi(C_{012})=C_{012}.$$
Notice that $\psi^{-1}=\psi$ and $\psi(S_\beta)=S_\beta$, then we can get $\psi(E_2)=E_0$ and $\psi(C_{12})=C_{01}$. Now we show $\psi\circ T_\beta=L_\beta \circ \psi$. For $(x,y)\in E_0$,
$$\psi\circ T_\beta(\begin{bmatrix}
               x \\
              y \\
    \end{bmatrix})=\psi(\begin{bmatrix}
               \beta x \\
              \beta y \\
    \end{bmatrix})=\begin{bmatrix}
               \beta x \\
         \frac{1}{\beta-1}-   \beta x -  \beta y \\
    \end{bmatrix}.$$
Since $\psi((x,y))\in E_2$, then
$$L_\beta \circ \psi(\begin{bmatrix}
               x \\
              y \\
    \end{bmatrix})=\beta \begin{bmatrix}
              x \\
            \frac{1}{\beta-1}- x-y \\
    \end{bmatrix}-\begin{bmatrix}
               0 \\
              1 \\
    \end{bmatrix}=\begin{bmatrix}
              \beta x \\
            \frac{\beta }{\beta-1}-\beta  x-\beta y -1\\
    \end{bmatrix}
=\psi\circ T_\beta(\begin{bmatrix}
               x \\
              y \\
       \end{bmatrix}).$$
Similarly, we have
\begin{align*}
\psi\circ T_\beta(\begin{bmatrix}
               x \\
              y \\
    \end{bmatrix})&=\begin{bmatrix}
               \beta x -1 \\
         \frac{\beta}{\beta-1}-   \beta x -  \beta y\\
    \end{bmatrix}=L_\beta \circ \psi(\begin{bmatrix}
               x \\
              y \\
    \end{bmatrix}),  \text{ for }(x,y)\in C_{01}\cup E_1,\\
\psi\circ T_\beta(\begin{bmatrix}
               x \\
              y \\
    \end{bmatrix})&=\begin{bmatrix}
               \beta x \\
         \frac{\beta}{\beta-1}-   \beta x -  \beta y\\
    \end{bmatrix}=L_\beta  \circ \psi(\begin{bmatrix}
               x \\
              y \\
    \end{bmatrix}), \text{ for } (x,y)\in C_{012}\cup C_{12}\cup C_{02}\cup E_2.
\end{align*} Since $\psi:S_\beta\rightarrow S_\beta$ is a bijection, then it follows from $\nu=\mu\circ\psi^{-1}$ that $\psi$ is an isomorphism.
\end{proof}

We end this section by recalling some definitions and results in \cite{GB} due to Boyarsky, A. and G\'ora, P., but rephrased to our setting.
Let $S$ be a bounded region in $\mathbb{R}^N$($N\geq 2$ is an integer) and let $\mathcal{P}=\{S_1,S_2,\ldots,S_m\}$ be a partition of $S$, where $m>0$ is an integer.
 Let $\tau$ be a transformation from $S$ into $S$. We say $\tau$ is $piecewise\ C^2\ and\ expanding$ with respect to $\mathcal{P}$ if:

\smallskip
(a) each $S_i$ is a bounded closed domain having a piecewise $C^2$ boundary of finite $(N-1)$-dimensional measure;

(b) $\tau_i=\tau|_{S_i}$ is a $C^2$, 1-1 transformation from int($S_i$) onto its image and can
be extended as a $C^2$ transformation onto $S_i, i = 1, 2 ,\ldots, m$;

(c) there exists $0 < c < 1$ such that for any $i = 1, 2 , \ldots, m$,
\begin{equation*} \Vert D\tau_i^{-1}\Vert <c,
\end{equation*}
where $D\tau_i^{-1}$ is the derivative matrix of $\tau_i^{-1}$ and $\Vert \cdot \Vert$ is the euclidean matrix norm, i.e., $\Vert A\Vert=(\sum_{i=1}^m\sum_{j=1}^na_{ij}^2)^\frac{1}{2}$ for $m\times n$ order matrix $A=(a_{ij})$.

\smallskip
Assume that the faces of $\partial S_i$ meet at angles bounded uniformly away from 0. Fix $1\leq i \leq m$.
 Let $F$ denote the set of singular points of $\partial S_i$. At any $x \in F$ we construct the largest cone having a vertex at $x$ and which lies completely in $S_i$. Let $\theta (x)$ denote the angle subtended at the vertex of this cone.
Then define
\begin{equation}\label{b}
\gamma(S_i)=\min_{x\in F}\theta(x).
\end{equation}
Since the faces of $\partial S_i$ meet at angles bounded uniformly away from 0, $\gamma(S_i)>0.$ Let
$\alpha(S_i)=\pi/2+\gamma(S_i)$ and
\begin{equation}\label{a}
a(S_i)=|\cos(\alpha(S_i))|.
\end{equation}
Assume $a=\min\{a(S_i):i=1,\ldots,m\}$.
\begin{corollary}\label{tacim}\cite[Corollary 1]{GB}
Let $\tau: S \rightarrow S, S\subset \mathbb{R}^N$, be piecewise $C^2$ and such that some
iterate $\tau^k$ satisfies $c(1+ 1/a)< 1$($c$ and $a$ corresponds to $\tau^k$), then $\tau$ admits an absolutely continuous invariant measure (acim).
\end{corollary}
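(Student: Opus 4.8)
The plan is to treat Corollary~\ref{tacim} as a consequence of the transfer-operator (Perron--Frobenius) method for multidimensional piecewise-expanding maps, combined with an orbit-averaging step that passes from the iterate $\tau^k$ to $\tau$ itself. The natural function space is the space $BV(S)$ of functions of bounded variation on $S$, where for $f\in L^1(S)$ one sets
$$V(f)=\sup\Bigl\{\int_S f\,\operatorname{div}g\,d\lambda:\ g\in C^1_0(S,\mathbb{R}^N),\ \|g\|_\infty\le 1\Bigr\},$$
and the Perron--Frobenius operator $P$ of a piecewise $C^2$ map acts on $L^1(S)$ by $Pf(x)=\sum_{y\in\tau^{-1}\{x\}}f(y)/|\det D\tau(y)|$. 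The fixed points of $P$ in the cone of nonnegative densities are precisely the densities of $\tau$-invariant measures absolutely continuous with respect to Lebesgue measure $\lambda$, so the whole problem reduces to producing such a fixed point.

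First I would record the elementary properties of $P$: it is positive, an $L^1$-contraction, and integral-preserving, so it maps probability densities into probability densities. The expansion bound $\|D\tau_i^{-1}\|<c<1$ forces $|\det D\tau_i|>1$, so each inverse branch is Lipschitz and $\tau$ is nonsingular (preimages of $\lambda$-null sets are $\lambda$-null); this makes $P$ well defined and guarantees that pushforwards of absolutely continuous measures under $\tau$ stay absolutely continuous, a fact I will need at the end.

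The decisive step, and the one I expect to be the main obstacle, is the Lasota--Yorke inequality
$$V(Pf)\le c\Bigl(1+\tfrac{1}{a}\Bigr)V(f)+K\|f\|_1$$
for a constant $K=K(\tau)$. Here the variation of $Pf$ splits into an interior part, where the gradient of each $f\circ\tau_i^{-1}$ is contracted by $\|D\tau_i^{-1}\|<c$, and a boundary part coming from the jumps of the indicators $\mathbf{1}_{\tau(\operatorname{int}S_i)}$ across the images of $\partial S_i$. Bounding the boundary contribution is exactly where the geometry of the partition enters: at each singular point of $\partial S_i$ the largest inscribed cone subtends an angle at least $\gamma(S_i)$, and a geometric lemma trades the codimension-one surface variation along $\partial S_i$ for the factor $1/a$ times the bulk variation, with $a=\min_i|\cos(\pi/2+\gamma(S_i))|$ as in \eqref{a}. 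The $C^2$ regularity of the branches bounds the distortion of $|\det D\tau_i|$ and absorbs the remaining lower-order terms into $K\|f\|_1$. Carrying out these multidimensional estimates carefully, rather than the one-dimensional bookkeeping of the classical argument, is the technical core.

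Finally I would apply the inequality to $\tau^k$, which is again piecewise $C^2$ and expanding with respect to the refined partition $\bigvee_{j=0}^{k-1}\tau^{-j}\mathcal{P}$, and for which $c(1+1/a)=:\alpha<1$ by hypothesis. Iterating gives $V(P_{\tau^k}^n f)\le\alpha^n V(f)+\frac{K}{1-\alpha}\|f\|_1$, so the Cesàro averages $\frac1n\sum_{j=0}^{n-1}P_{\tau^k}^j\mathbf{1}_S$ have uniformly bounded variation. Since the unit ball of $BV(S)$ is relatively compact in $L^1(S)$, I extract an $L^1$-limit $h\ge0$ of a subsequence and check that $P_{\tau^k}h=h$, so $d\mu_k=h\,d\lambda$ is a $\tau^k$-invariant acim. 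To descend to $\tau$ I average along the orbit, setting $\mu=\frac1k\sum_{j=0}^{k-1}(\tau^j)_*\mu_k$; each summand is absolutely continuous by nonsingularity, and because $(\tau^k)_*\mu_k=\mu_k$ a one-line telescoping computation gives $\tau_*\mu=\mu$, so $\mu$ is the desired $\tau$-invariant acim.
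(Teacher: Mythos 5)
This corollary is quoted verbatim from \cite{GB} and the paper gives no proof of it, so the only internal point of comparison is the parallel argument the authors run themselves in Theorem \ref{T} and Theorem \ref{Racim}. Your reconstruction is correct and follows the standard route: the multidimensional Lasota--Yorke inequality $V(Pf)\le c(1+1/a)V(f)+K\Vert f\Vert_1$ (whose proof you rightly flag as the technical core --- it is the main theorem of \cite{GB}, quoted in the paper in its random-map form as Theorem \ref{VF}, so it is reasonable to take it as given here), compactness of the unit ball of $BV$ in $L^1$, a fixed density for $P_{\tau^k}$, and the telescoping average $\mu=\frac{1}{k}\sum_{j=0}^{k-1}(\tau^j)_*\mu_k$; your observation that the Lipschitz inverse branches make $\tau$ nonsingular is exactly what is needed to keep the summands absolutely continuous, and the verification that the extracted limit $h$ is a nonzero fixed point of $P_{\tau^k}$ goes through because the Ces\`aro averages all have integral one. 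The one place you genuinely diverge from how the paper handles the analogous situation is the passage from $\tau^k$ back to $\tau$: in Theorem \ref{Racim} the authors write $n=jl+i$, use $P_{R^n}=(P_{R^l})^jP_{R^i}$ to bound $\Vert P_{R^n}\mathbf{1}\Vert_{BV}$ uniformly over all large $n$, and invoke Kakutani--Yosida directly, producing a fixed point of $P_R$ itself rather than of the iterate. The two endgames are interchangeable: yours is the shorter derivation of the corollary from the theorem and needs the pushforward-averaging step, while theirs avoids that step at the cost of tracking the constants $C_1,C_2$ controlling the intermediate iterates $P_{R^i}$, $i<l$.
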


Using the above corollary, we can prove the following theorem.
\begin{theorem}\label{T}
Let $T_\beta$ be defined as in (\ref{T1}). Then $T_\beta$ admits an acim. And so does $L_\beta$ which is defined as in (\ref{L}).
\end{theorem}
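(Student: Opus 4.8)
The plan is to verify the hypotheses of Corollary \ref{tacim} for a suitable \emph{iterate} of $T_\beta$, and then to deduce the statement for $L_\beta$ from the isomorphism of Theorem \ref{iso}. Since each branch of $T_\beta$ in \eqref{T1} has the form $\vec z \mapsto \beta \vec z - \vec q$ with $\vec q \in \{\vec q_0, \vec q_1, \vec q_2\}$, the map is piecewise affine, hence piecewise $C^2$, with respect to the three-cell partition $\mathcal P = \{E_0,\ C_{01}\cup E_1,\ C_{012}\cup C_{12}\cup C_{02}\cup E_2\}$ (refining each cell into its connected polygonal components if one insists on connected domains). On each cell $DT_\beta=\beta I$, so $D(T_\beta^k)^{-1}=\beta^{-k}I$ on every cell of the refined partition $\bigvee_{j=0}^{k-1}T_\beta^{-j}\mathcal P$, and in the Frobenius norm of the definition this gives $c=c_k=\sqrt2\,\beta^{-k}$, which tends to $0$ as $k\to\infty$.

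The key geometric observation controls the constant $a$ for the iterate. Each branch is a homothety $\vec z\mapsto\beta\vec z-\vec q$, whose inverse sends every line to a parallel line; hence the pullback under $T_\beta^{-j}$ of any boundary segment of $\mathcal P$ keeps its direction. The boundaries appearing in \eqref{partition} lie along the lines $x=\mathrm{const}$, $y=\mathrm{const}$ and $x+y=\mathrm{const}$, i.e. in exactly three directions (normals $(1,0),(0,1),(1,1)$), and the outer edges of $\Delta$ share these same directions. Therefore every cell of $\bigvee_{j=0}^{k-1}T_\beta^{-j}\mathcal P$, for every $k$, is a polygon all of whose edges are horizontal, vertical, or of slope $-1$, so each interior angle lies in $\{\pi/4,\ \pi/2,\ 3\pi/4\}$. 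Thus the minimal cone angle in \eqref{b} satisfies $\gamma(S_i)\geq\pi/4$, and by \eqref{a},
\[
a=a_k=\min_i\bigl|\cos(\tfrac{\pi}{2}+\gamma(S_i))\bigr|=\min_i\sin\gamma(S_i)\geq\sin\tfrac{\pi}{4}=\tfrac{1}{\sqrt2},
\]
uniformly in $k$.

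Combining the two facts,
\[
c_k\Bigl(1+\frac{1}{a_k}\Bigr)\leq\frac{\sqrt2}{\beta^{k}}\,(1+\sqrt2)=\frac{2+\sqrt2}{\beta^{k}},
\]
which is $<1$ as soon as $\beta^k>2+\sqrt2$. Since $\beta>1$, any integer $k>\log(2+\sqrt2)/\log\beta$ works, so Corollary \ref{tacim} applies and $T_\beta$ admits an acim $\mu$. For the lazy map, Theorem \ref{iso} gives that $(S_\beta,\mathcal S,\mu,T_\beta)$ and $(S_\beta,\mathcal S,\nu,L_\beta)$ are isomorphic with $\nu=\mu\circ\psi^{-1}$, where $\psi$ is the affine bijection of $S_\beta$ whose linear part has determinant $-1$. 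An affine bijection pushes the normalized Lebesgue measure $\lambda_2$ to a constant multiple of itself, hence maps $\lambda_2$-null sets to $\lambda_2$-null sets; so $\mu\ll\lambda_2$ forces $\nu=\mu\circ\psi^{-1}\ll\lambda_2$, and $\nu$ is an $L_\beta$-invariant acim.

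The main obstacle is precisely the uniform lower bound on $a_k$. A priori, refining the partition for high iterates could create thin polygonal slivers whose interior angles tend to $0$, in which case $1/a_k$ would blow up and the criterion $c_k(1+1/a_k)<1$ could fail even though $c_k\to0$. The observation that $T_\beta$, being a homothety on each branch, confines every partition edge to the three fixed directions (horizontal, vertical, slope $-1$) is what rules this out and pins $a_k$ below by $1/\sqrt2$ for all $k$; this is the single step that uses more than the bare affine/expanding structure and deserves the most care.
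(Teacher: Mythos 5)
Your proof of the $T_\beta$ half is essentially the paper's own argument: apply Corollary \ref{tacim} to an iterate $T_\beta^k$ with $c_k=\sqrt2/\beta^k$ and $a=\sqrt2/2$ for the refined partition $\bigvee_{j=0}^{k-1}T_\beta^{-j}\mathcal P$; your explicit justification that $a$ stays bounded below uniformly in $k$ (all edges remain horizontal, vertical, or of slope $-1$ because each inverse branch is a homothety) is correct and is a welcome addition, since the paper simply asserts $a=\sqrt2/2$ without comment. The one genuine difference is the $L_\beta$ half: the paper reruns the same estimate with the partition $\mathcal Q=\{C_{012}\cup C_{01}\cup C_{02}\cup E_0,\ C_{12}\cup E_1,\ E_2\}$, whereas you transport the acim through the isomorphism of Theorem \ref{iso}, using that $\psi$ is an affine bijection of determinant $-1$ and hence preserves $\lambda_2$-null sets; this is valid and slightly more economical, at the cost of invoking Theorem \ref{iso} where the paper's route for $L_\beta$ is self-contained.
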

\begin{proof}

Since $\beta>1$, $\lim_{n\rightarrow\infty}\beta^n=\infty$. We can choose an integer $k$ such that $$\beta^k>2(2+\sqrt{2}).$$
Use the partition $\mathcal{P}:\{S_1,S_2,S_3\}$, where
$$S_1=E_0,S_2=C_{01}\cup E_1,S_3=C_{012}\cup C_{12}\cup C_{02}\cup E_2. $$
Consider the iterate $T_\beta^k$ and the corresponding partition $\vee_{i=0}^{k-1}T_\beta^{-i}(\mathcal{P})$.
For $P_i\in \vee_{i=0}^{k-1}T_\beta^{-i}(\mathcal{P})$,
let $T^k_{\beta,i}=T_\beta^k|_{P_i}$.
Since the derivative matrix of $(T_{\beta,i}^{k})^{-1}$ is
$\begin{bmatrix}
              \frac{1}{\beta^k} & 0 \\
              0 & \frac{1}{\beta^k} \\
    \end{bmatrix}, $
then the Euclidean matrix norm, $$\Vert D(T_{\beta,i}^k)^{-1}\Vert=\frac{\sqrt{2}}{\beta^k}< \frac{2\sqrt{2}}{\beta^k}:=c<1.$$

For the partition $\vee_{i=0}^{k-1}T_\beta^{-i}(\mathcal{P})$, we have $a=\sqrt{2}/2,$
which implies
$$c(1 + \frac{1}{a})=\frac{2\sqrt{2}}{\beta^k}(1+\sqrt{2})<\frac{2\sqrt{2}}{2(2+\sqrt{2})}(1+\sqrt{2})=1.$$
By Corollary \ref{tacim}, $T_\beta$ admits an acim.

\smallskip
For $L_\beta$, we use the partition $\mathcal{Q}=\{C_{012}\cup C_{01}\cup C_{02}\cup E_0,C_{12}\cup E_1,E_2\}$. In a similar way we can also prove that $L_\beta$ has an acim.
\end{proof}

\section{Random transformation for $1<\beta\leq 3/2$}
\subsection{$K_\beta$ and some properties} From Definition \ref{Tmap}, one can see that on $E_i$ both the greedy and lazy maps are identical  which means that they assign the same digits. On $C_{ij}$, the greedy map assigns the digit $\vec{q}_j$, while the lazy map assigns the digit $\vec{q}_i$.
On $C_{012}$, the digit $\vec{q}_1$ cannot be assigned by the two maps.
Now, in order to define a random transformation, we need to randomly select the map used in the switch regions $C_{ij}$ and $C_{012}$. When $\vec{z}$ belongs to $C_{ij}$ we flip a 2-sided coin to decide which map will be applied to $\vec{z}$, while we flip a 3-sided `coin' when $\vec{z}$ belongs to $C_{012}$.

Recall $\Omega=\{0,1\}^\mathbb{N}$ and $\Upsilon=\{0,1,2\}^\mathbb{N}$.
Let $\sigma:\Omega\rightarrow\Omega$ and $\sigma':\Upsilon\rightarrow\Upsilon$ be the left shifts.
Define $ K_\beta: \Omega \times \Upsilon \times S_\beta \rightarrow  \Omega \times \Upsilon\times S_\beta$ by

\begin{equation*}
K_\beta(\omega,\upsilon,\vec{z})=
\begin{cases}
(\omega,\upsilon,\beta\vec{z}-\vec{q}_i), &\mbox{if $\vec{z}\in E_i,i=0,1,2$},\\
(\sigma\omega,\upsilon,\beta \vec{z}-\vec{q}_i), &\mbox{if $\omega_1=0$ and $\vec{z}\in C_{ij},\ ij\in\{01,12,02\}$},\\
(\sigma\omega,\upsilon,\beta \vec{z}-\vec{q}_j),  &\mbox{if $\omega_1=1$ and $\vec{z}\in C_{ij},\ ij\in\{01,12,02\}$},\\
(\omega,\sigma'\upsilon,\beta \vec{z}-\vec{q}_i),  &\mbox{if $\vec{z}\in C_{012}$ and $\upsilon_1=i\in\{0,1,2\}$}.
\end{cases}
\end{equation*}
The digits are given by
\begin{equation*}
d_1 = d_1(\omega,\upsilon,\vec{z})=
\begin{cases}
\vec{q}_i,  &\mbox{if $\vec{z}\in E_i,i=0,1,2$},\\
\   &\mbox{or $(\omega,\upsilon,\vec{z})\in \Omega\times \{\upsilon_1=i\}\times C_{012}$},\\
\  &\mbox{or $(\omega,\upsilon,\vec{z})\in \{\omega_1=0\}\times \Upsilon\times C_{ij},\ ij\in\{01,12,02\}$},\\
\vec{q}_j, &\mbox{if $(\omega,\upsilon,\vec{z})\in \{\omega_1=1\}\times \Upsilon\times C_{ij},\ ij\in\{01,12,02\}$}.
\end{cases}
\end{equation*}
Then
\begin{equation*}
K_\beta(\omega,\upsilon,\vec{z})=
\begin{cases}
(\omega,\upsilon,\beta\vec{x}-d_1), &\mbox{if $\vec{z}\in E_0\cup E_1\cup E_2$},\\
(\sigma\omega,\upsilon,\beta \vec{z}-d_1), &\mbox{if $\vec{z}\in C_{01}\cup C_{12}\cup C_{02}$},\\
(\omega,\sigma'\upsilon,\beta \vec{z}-d_1), &\mbox{if $\vec{z}\in C_{012}$}.
\end{cases}
\end{equation*}

Set $d_n=d_n(\omega,\upsilon,\vec{z})=d_1(K_\beta^{n-1}(\omega,\upsilon,\vec{z}))$, and $\pi_3:\Omega \times \Upsilon \times  S_\beta \rightarrow S_\beta$ be the canonical projection onto the third coordinate. Then

$$\pi_3(K_\beta^{n}(\omega,\upsilon,\vec{z}))=\beta^n \vec{z}-\beta^{n-1}d_1-\cdots-\beta d_{n-1}-d_n,$$
and rewriting yiels
$$\vec{z}=\frac{d_1}{\beta}+\frac{d_2}{\beta^2}+\cdots+\frac{d_n}{\beta^n}+\frac{\pi_3(K_\beta^{n}(\omega,\upsilon,\vec{z}))}{\beta^n}.$$
Since $\pi_3(K_\beta^{n}(\omega,\upsilon,\vec{z}))\in S_\beta$ and $S_\beta$ is a bounded set in $\mathbb{R}^2$, it follows that
$$\Vert\vec{z}-\sum_{i=1}^n\frac{d_i}{\beta^i}\Vert_1=\frac{\Vert\pi_3(K_\beta^{n}(\omega,\upsilon,\vec{z}))\Vert_1}{\beta^n}\rightarrow 0,$$
where $\parallel \cdot \parallel_1$ denotes the $L_1$ norm, i.e. the sum of the absolute values of the vector elements.
This shows that for all $\omega\in\Omega$ , $\upsilon\in\Upsilon$ and for all $\vec{z}\in S_\beta$ one has
$$\vec{z}=\sum_{i=1}^\infty \frac{d_i}{\beta^i}= \sum_{i=1}^\infty \frac{d_i(\omega,\upsilon,\vec{z})}{\beta^i}.$$

For each point $\vec{z}\in S_\beta$, consider the set
$$D_{\vec{z}}=\{(d_1(\omega,\upsilon,\vec{z}),d_2(\omega,\upsilon,\vec{z}),\ldots): \omega\in\Omega,\upsilon\in\Upsilon\}.$$
Recall that  $\prec$ and $\preceq$ are the lexicographical ordering on $\Omega$, $\Upsilon$ and $\{\vec{q}_0,\vec{q}_1,\vec{q}_2\}^\mathbb{N}$. The following theorem shows how the lexicographical ordering on $\Omega$ and $\Upsilon$ affect the ordering of the elements in $D_{\vec{z}}$.
\begin{theorem}\label{3.1}
Suppose $\omega,\omega'\in\Omega, \upsilon, \upsilon' \in \Upsilon$ are such that $\omega\prec\omega'$ and $\upsilon \prec \upsilon'$. Then for $\vec{z}\in S_\beta$,
$$(d_1(\omega,\upsilon,\vec{z}),d_2(\omega,\upsilon,\vec{z}),\ldots)\preceq (d_1(\omega',\upsilon',\vec{z}),d_2(\omega',\upsilon',\vec{z}),\ldots).$$
\end{theorem}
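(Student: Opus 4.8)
The plan is to reduce the lexicographic comparison to a coordinate-by-coordinate statement and then exploit the fact that in each switch region a larger coin value produces a larger digit. Concretely, I would first observe that to prove $(d_i(\omega,\upsilon,\vec{z}))\preceq(d_i(\omega',\upsilon',\vec{z}))$ it suffices to show the conditional statement: for every $n\geq 1$, if $d_l(\omega,\upsilon,\vec{z})=d_l(\omega',\upsilon',\vec{z})$ for all $1\leq l\leq n-1$, then $d_n(\omega,\upsilon,\vec{z})\leq d_n(\omega',\upsilon',\vec{z})$ in the ordering $\vec{q}_0<\vec{q}_1<\vec{q}_2$. Indeed, either the two digit sequences coincide, in which case $\preceq$ is immediate, or they have a first disagreement at some index $n$, where this conditional inequality sharpens to the strict inequality required for $\prec$.

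Fixing such an $n$ and assuming digit agreement through index $n-1$, the first step is that agreement of digits forces agreement of orbits. Using the identity $\pi_3(K_\beta^{m}(\cdot))=\beta^{m}\vec{z}-\beta^{m-1}d_1-\cdots-d_m$ established above, the hypothesis $d_l=d_l'$ for $l\leq m$ gives $\pi_3(K_\beta^{m}(\omega,\upsilon,\vec{z}))=\pi_3(K_\beta^{m}(\omega',\upsilon',\vec{z}))$ for all $m\leq n-1$. Writing $\vec{w}_m$ for this common position, both orbits visit the same region of the partition at every time $m\leq n-1$ and sit at the common point $\vec{w}_{n-1}$ at time $n$. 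Consequently the two orbits consume coins in lockstep: letting $p$ (resp.\ $r$) be the number of indices $m\in\{1,\dots,n-1\}$ with $\vec{w}_{m-1}\in C_{01}\cup C_{12}\cup C_{02}$ (resp.\ $\vec{w}_{m-1}\in C_{012}$), the coin states driving step $n$ are $(\sigma^{p}\omega,(\sigma')^{r}\upsilon)$ and $(\sigma^{p}\omega',(\sigma')^{r}\upsilon')$ for the two orbits.

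The main point, and the step I expect to require the most care, is to show that digit agreement up to time $n-1$ forces the \emph{consumed} coins to agree, that is $\omega_l=\omega_l'$ for $1\leq l\leq p$ and $\upsilon_l=\upsilon_l'$ for $1\leq l\leq r$. This holds because the coin-to-digit rule is injective within each switch region: on $C_{ij}$ the digit equals $\vec{q}_i$ precisely when the consumed $\omega$-coin is $0$ and equals $\vec{q}_j$ precisely when it is $1$, so equal digits force the consumed coin values to be equal; on $C_{012}$ the digit $\vec{q}_{\upsilon_1}$ determines the consumed $\upsilon$-coin outright. Running this over each of the $p$ (resp.\ $r$) switch visits in $\{1,\dots,n-1\}$ yields the claimed coordinatewise equalities.

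Finally I would conclude by cases on the region of $\vec{w}_{n-1}$. If $\vec{w}_{n-1}\in E_i$ then $d_n=d_n'=\vec{q}_i$ regardless of the coins. If $\vec{w}_{n-1}\in C_{ij}$ with $i<j$, then $d_n$ is determined by $\omega_{p+1}$ and $d_n'$ by $\omega'_{p+1}$; since $\omega\prec\omega'$ and $\omega_l=\omega_l'$ for $l\leq p$, the first position at which $\omega,\omega'$ differ is $\geq p+1$, whence $\omega_{p+1}\leq\omega'_{p+1}$, and because $0\mapsto\vec{q}_i$, $1\mapsto\vec{q}_j$ with $\vec{q}_i<\vec{q}_j$ this gives $d_n\leq d_n'$. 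The region $C_{012}$ is handled identically using $\upsilon\prec\upsilon'$ together with the monotone rule $\upsilon_{r+1}\mapsto\vec{q}_{\upsilon_{r+1}}$. This establishes the conditional inequality for every $n$, and hence the theorem.
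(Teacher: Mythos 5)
Your proof is correct and follows essentially the same route as the paper's: both arguments track how many coins from $\Omega$ and $\Upsilon$ have been consumed up to the first time the digit sequences could disagree, and both conclude via the monotone coin-to-digit assignment on the switch regions ($0\mapsto\vec{q}_i$, $1\mapsto\vec{q}_j$ on $C_{ij}$ and $\upsilon_k\mapsto\vec{q}_{\upsilon_k}$ on $C_{012}$). The only cosmetic difference is that you deduce agreement of the consumed coins from agreement of the digits (via injectivity of the coin-to-digit rule), whereas the paper runs the implication the other way by locating the first disagreeing coin directly; your write-up is, if anything, slightly more careful about why the two orbits stay synchronized.
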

\begin{proof}
Let $m:=\inf\{i:\omega_i<\omega'_i\},\ n:=\inf\{i:\upsilon_i<\upsilon'_i\}$. Then we have $\omega_m<\omega'_m$ and $\upsilon_n<\upsilon'_n$.
Denote by $t_i$ the time of the $i^{\text{th}}$ visit to the region $\Omega\times\Upsilon\times C$ of the orbit of $(\omega,\upsilon,\vec{z})$ under $K_\beta$.
Denote by $s_j$ the time of the $j^{\text{th}}$ visit to the region $\Omega\times\Upsilon\times C_{012}$ of the orbit of $(\omega,\upsilon,\vec{z})$ under $K_\beta$. One can see that $K_\beta^{t_i}(\omega,\upsilon,\vec{z})$ hits $ \Omega\times\Upsilon\times C$ for the  $i^{\text{th}}$ time and $K_\beta^{s_j}(\omega,\upsilon,\vec{z})$ hits $ \Omega\times\Upsilon\times C_{012}$ for the  $j^{\text{th}}$ time.

Let $l=\min\{t_m,s_n\}$.
Then $\pi_3(K_\beta^{i}(\omega,\upsilon,\vec{z}))=\pi_3(K_\beta^{i}(\omega',\upsilon',\vec{z}))$ for $i=0,\ldots,l$. It follows that $d_i(\omega,\upsilon,\vec{z})=d_i(\omega',\upsilon',\vec{z})$ for $i=0,\ldots,l$.

If $l=\infty$, then $d_{i}(\omega,\upsilon,\vec{z})=d_{i}(\omega',\upsilon',\vec{z})$ for all $i$. If $l<+\infty$, then $K_\beta^{l}(\omega,\upsilon,\vec{z})=K_\beta^{l}(\omega',\upsilon',\vec{z})$ hits $ \Omega\times\Upsilon\times C$ for the  $m^{\text{th}}$ time or $ \Omega\times\Upsilon\times C_{012}$ for the  $n^{\text{th}}$ time.
Since $\omega_m<\omega'_m$ and $\upsilon_n<\upsilon'_n$, then $$d_{l+1}(\omega,\upsilon,\vec{z})=d_{1}(K_\beta^{l}(\omega,\upsilon,\vec{z}))< d_1(K_\beta^{l}(\omega',\upsilon',\vec{z}))=d_{l+1}(\omega',\upsilon',\vec{z}).$$
\end{proof}

Now we show that any representation of $\vec{z}$ can be generated from the map $K_\beta$ by choosing appropriate $\omega\in\Omega$ and $\upsilon\in\Upsilon$. We need the following lemma.
\begin{lemma}\label{delta1}
Let $\beta\in(1,3/2]$. Let $(x,y)\in S_\beta$ and $(x,y)=\sum_{i=1}^\infty a_i\beta^{-i}$ with $a_i\in\{\vec{q}_0,\vec{q}_1,\vec{q}_2\}$ be a representation of $(x,y)$ in base $\beta$. One has,
\begin{itemize}[leftmargin=*]
\item[](i) If $(x,y)\in E_i$ for some $i\in\{0,1,2\}$, then $a_1=\vec{q}_i$;
\item[](ii) If $(x,y)\in C_{ij}$ for some $ij \in \{01,12,02\}$, then $a_1\in\{\vec{q}_i,\vec{q}_j\}$;
\item[](iii) If $(x,y)\in C_{012}$, then $a_1\in\{\vec{q}_0,\vec{q}_1,\vec{q}_2\}$.
\end{itemize}
\end{lemma}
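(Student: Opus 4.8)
The plan is to collapse all three parts into a single membership statement: the first digit $a_1$ of a representation of $(x,y)$ can equal $\vec{q}_k$ only when $(x,y)\in f_{\vec{q}_k}(S_\beta)$. First I would isolate the tail of the representation by setting
$$w:=\beta(x,y)-a_1=\sum_{i=1}^\infty a_{i+1}\beta^{-i}.$$
Since $(a_{i+1})_{i=1}^\infty\in\{\vec{q}_0,\vec{q}_1,\vec{q}_2\}^\mathbb{N}$ is again an admissible digit sequence, $w$ is by definition a representation in base $\beta$ of a point of the attractor, so $w\in S_\beta$; concretely, $w$ is the limit of the nested compact sets $f_{a_2}\circ\cdots\circ f_{a_n}(S_\beta)\subset S_\beta$. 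Consequently $(x,y)=(a_1+w)/\beta=f_{a_1}(w)\in f_{a_1}(S_\beta)$, which is exactly the claim that $a_1=\vec{q}_k$ forces $(x,y)\in f_{\vec{q}_k}(S_\beta)$.

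With this in hand the three parts follow by inspection of the decompositions recorded in Section~3, namely
$$f_{\vec{q}_0}(S_\beta)=E_0\cup C_{01}\cup C_{02}\cup C_{012},\quad f_{\vec{q}_1}(S_\beta)=E_1\cup C_{01}\cup C_{12}\cup C_{012},\quad f_{\vec{q}_2}(S_\beta)=E_2\cup C_{12}\cup C_{02}\cup C_{012},$$
together with the fact that $\{E_0,E_1,E_2,C_{01},C_{12},C_{02},C_{012}\}$ partitions $S_\beta$. I would argue contrapositively: if $(x,y)$ lies in a prescribed region, then $a_1$ cannot be any $\vec{q}_k$ whose image $f_{\vec{q}_k}(S_\beta)$ misses that region. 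For $(x,y)\in E_i$ only $f_{\vec{q}_i}(S_\beta)$ contains $E_i$, forcing $a_1=\vec{q}_i$, which gives (i). For $(x,y)\in C_{ij}$ the cell $C_{ij}$ occurs precisely in $f_{\vec{q}_i}(S_\beta)$ and $f_{\vec{q}_j}(S_\beta)$, so $a_1\in\{\vec{q}_i,\vec{q}_j\}$, which is (ii); and $C_{012}$ lies in all three images, which is (iii).

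The only step requiring genuine care—and which I would flag as the main, if modest, obstacle—is verifying that the tail $w$ lands in $S_\beta$ itself rather than merely in the convex hull $\Delta$. This is precisely where the description of $S_\beta$ as the attractor of the IFS, equivalently as the set of points admitting a coding in $\{\vec{q}_0,\vec{q}_1,\vec{q}_2\}^\mathbb{N}$, is invoked. Everything after that reduction is bookkeeping against the partition, so no estimates beyond the convergence of $\sum_{i\geq1}a_{i+1}\beta^{-i}$ are needed.
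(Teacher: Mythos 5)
Your proposal is correct. The single reduction you make --- the tail $w=\beta(x,y)-a_1=\sum_{i\ge 1}a_{i+1}\beta^{-i}$ lies in $S_\beta$ (as the intersection point of the nested compacts $f_{a_2}\circ\cdots\circ f_{a_n}(S_\beta)$), hence $(x,y)=f_{a_1}(w)\in f_{a_1}(S_\beta)$ --- is sound, and combined with the decompositions $f_{\vec{q}_0}(S_\beta)=E_0\cup C_{01}\cup C_{02}\cup C_{012}$, etc., and the pairwise disjointness of the seven cells, it does deliver all three parts. The paper argues differently in form: it works contrapositively with the raw coordinate estimates that the choice of $a_1$ forces ($a_1=\vec{q}_1\Rightarrow x\ge 1/\beta$, $a_1=\vec{q}_2\Rightarrow y\ge 1/\beta$, $a_1=\vec{q}_0\Rightarrow x+y\le \tfrac{1}{\beta(\beta-1)}$) and checks these directly against the defining inequalities of each cell, never invoking the displayed decompositions. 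Your version is the more structural packaging of the same three one-line estimates: the containments you need (that $f_{\vec{q}_k}(S_\beta)$ misses a given cell) are exactly those inequalities, so the mathematical content is identical, but your route has the advantage of making the mechanism reusable --- it is precisely the argument the paper itself deploys later in the proof of Lemma \ref{lem w1} for $3/2<\beta\le\beta^*$, where $f_{\vec{q}_i}^{-1}(\vec{z})=\sum_{i\ge1}a_{i+1}\beta^{-i}\in S_\beta$ is the key step. Two small remarks: the distinction you flag between landing in $S_\beta$ versus in $\Delta$ is vacuous here, since $S_\beta=\Delta$ for $1<\beta\le 3/2$; and your argument silently leans on the decomposition identities and the partition property, which the paper asserts but does not verify --- a fully self-contained write-up would still have to check them, which is where the hypothesis $\beta\le 3/2$ actually enters (e.g.\ $E_0\subseteq f_{\vec{q}_0}(\Delta)$ needs $2/\beta\le\tfrac{1}{\beta(\beta-1)}$).
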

\begin{proof}
(i) Suppose $a_1\neq \vec{q}_0$. From $a_1\in\{\vec{q}_1,\vec{q}_2\}$ and $(x,y)=\sum_{i=1}^\infty a_i\beta^{-i}$ we have
$$x\geq\frac{1}{\beta}\ \text{or}\ y\geq\frac{1}{\beta}.$$
Then $(x,y)\notin E_0$.

Suppose $a_1\neq \vec{q}_1$. If $a_1=\vec{q}_0$, then
$$x+y=\sum_{i=2}^\infty\frac{\parallel a_i\parallel_1}{\beta^i}\leq\frac{1}{\beta(\beta-1)}.$$
 If $a_1=\vec{q}_2$, then  $y\geq\frac{1}{\beta}.$ In both cases, $(x,y)\notin E_1$.

Suppose $a_1\neq \vec{q}_2$. By a similar proof we have $(x,y)\notin E_2$.

(ii) Suppose $a_1\notin\{\vec{q}_i,\vec{q}_j\}$.
If $a_1=\vec{q}_2$, then $y\geq\frac{1}{\beta}$, which implies $(x,y)\notin C_{01}$.
If $a_1=\vec{q}_0$, then $x+y\leq\frac{1}{\beta(\beta-1)}$, which implies $(x,y)\notin C_{12}$.
If $a_1=\vec{q}_1$, then $x\geq\frac{1}{\beta}$, which implies $(x,y)\notin C_{02}$.

(iii) holds trivially.
\end{proof}
\begin{theorem}\label{thm w}
For $\beta\in(1,3/2]$, let $\vec{z}\in S_\beta$ and $\vec{z}=\sum_{i=1}^\infty a_i \beta^{-i}$ with $a_i\in\{\vec{q}_0,\vec{q}_1,\vec{q}_2\}$ be a representation of $\vec{z}$ in base $\beta.$ Then there exists an  $\omega\in\Omega$ and an $\upsilon\in\Upsilon$ such that $a_i=d_i(\omega,\upsilon,\vec{z})$.
\end{theorem}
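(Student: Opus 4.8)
The plan is to construct $\omega$ and $\upsilon$ coordinate-by-coordinate by following the orbit of $\vec{z}$ under $K_\beta$ and, at each visit to a switch region, reading off from the prescribed representation which coin value is needed to reproduce the demanded digit. The engine of the argument is Lemma \ref{delta1}, which guarantees that the digit $a_n$ required by the representation is always an admissible choice in whatever region the current point occupies; without it the construction could stall.

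Concretely, I would set $\vec{z}_0=\vec{z}$ and, for $n\geq 1$, $\vec{z}_n=\beta\vec{z}_{n-1}-a_n$. A short computation gives $\vec{z}_{n-1}=\sum_{j=1}^\infty a_{n-1+j}\beta^{-j}$, so each $\vec{z}_{n-1}$ is itself a point of $S_\beta$ carrying the tail representation $(a_n,a_{n+1},\ldots)$. The construction then proceeds by induction on $n$, maintaining the invariant that $\pi_3(K_\beta^{n-1}(\omega,\upsilon,\vec{z}))=\vec{z}_{n-1}$ and $d_i(\omega,\upsilon,\vec{z})=a_i$ for all $i<n$. The base case $n=1$ is immediate since $\pi_3(K_\beta^0(\omega,\upsilon,\vec{z}))=\vec{z}=\vec{z}_0$.

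For the inductive step I would split on the region containing $\vec{z}_{n-1}$, which by the partition (\ref{partition}) is exactly one of $E_i$, $C_{ij}$, or $C_{012}$. If $\vec{z}_{n-1}\in E_i$, then Lemma \ref{delta1}(i) forces $a_n=\vec{q}_i$, and $K_\beta$ assigns $d_n=\vec{q}_i=a_n$ while consuming no coin. If $\vec{z}_{n-1}\in C_{ij}$, then Lemma \ref{delta1}(ii) gives $a_n\in\{\vec{q}_i,\vec{q}_j\}$; since this is, say, the $k$-th visit of the orbit to $C=C_{01}\cup C_{12}\cup C_{02}$, I set $\omega_k=0$ when $a_n=\vec{q}_i$ and $\omega_k=1$ when $a_n=\vec{q}_j$, which forces $d_n=a_n$. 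If $\vec{z}_{n-1}\in C_{012}$, then writing $a_n=\vec{q}_i$ and letting this be the $l$-th visit to $C_{012}$, I set $\upsilon_l=i$, again forcing $d_n=a_n$. In every case $\beta\vec{z}_{n-1}-a_n=\vec{z}_n$, so $\pi_3(K_\beta^n(\omega,\upsilon,\vec{z}))=\vec{z}_n$ and the invariant is preserved. Coordinates of $\omega$ or $\upsilon$ never selected by this process, which happens if the orbit meets $C$ or $C_{012}$ only finitely often, are assigned arbitrarily, say $0$.

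The point requiring care---though it is not truly deep---is the self-consistency of this recursive assignment: I must verify that the region into which $\vec{z}_{n-1}$ falls, and hence the decision of which coin to read and how to set it, depends only on data already fixed. This holds because $\vec{z}_{n-1}$ is determined solely by the representation $(a_i)$ through the recursion $\vec{z}_n=\beta\vec{z}_{n-1}-a_n$, with no reference to the coin sequences, while the coordinate of $\omega$ (resp. $\upsilon$) consumed at step $n$ is always the next previously-unused one, being indexed by the running count of visits to $C$ (resp. $C_{012}$). Thus each coordinate is assigned at most once and no circularity arises, and the resulting $\omega\in\Omega$, $\upsilon\in\Upsilon$ satisfy $a_i=d_i(\omega,\upsilon,\vec{z})$ for all $i$, as required.
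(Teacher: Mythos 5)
Your proposal is correct and follows essentially the same route as the paper's proof: both use Lemma \ref{delta1} to guarantee that the demanded digit $a_n$ is admissible in the region containing the tail point $\vec z_{n-1}$, and both fix the next unused coordinate of $\omega$ or $\upsilon$ according to the running count of visits to $C$ and $C_{012}$. The only cosmetic difference is that the paper records the construction as a decreasing sequence of cylinder sets $\Omega_n,\Upsilon_n$ and takes their (non-empty) intersections, whereas you build the sequences directly and fill the never-consumed coordinates arbitrarily; these are equivalent.
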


\begin{proof} Let $\vec{z}_n=\sum_{i=1}^\infty\frac{a_{i+n-1}}{\beta^i}.$ Notice that $\vec{z}_1=\vec{z}.$
Denote by $k_n(\vec{z})$ and $l_n(\vec{z})$ the number of times we flip a 2-sided coin and a 3-sided coin, respectively, which are
$$k_n(\vec{z})=\sum_{i=1}^n\textbf{1}_C(\vec{z}_n),\ \ l_n(\vec{z})=\sum_{i=1}^n\textbf{1}_{C_{012}}(\vec{z}_n).$$
\begin{itemize}[leftmargin=*]
\item[-]  If $\vec{z}\in E_i$, then $k_1(\vec{z})=l_1(\vec{z})=0$. It follows from Lemma \ref{delta1} that $a_1=\vec{q}_i$. Let $\Omega_1=\Omega,\Upsilon_1=\Upsilon.$
\item[-]  If $\vec{z}\in C_{ij},ij \in \{01,12,02\}$, then $k_1(\vec{z})=1,\ l_1(\vec{z})=0$. Let $\Upsilon_1=\Upsilon.$
It follows from Lemma \ref{delta1} that $a_1\in\{\vec{q}_i,\vec{q}_j\}$.
\begin{itemize}
\item If $a_1=\vec{q}_i$, we let $\Omega_1=\{\omega\in\Omega:\omega_1=0\}$.
\item If $a_1=\vec{q}_j$, we let $\Omega_1=\{\omega\in\Omega:\omega_1=1\}$.
\end{itemize}
\item[-]  If $\vec{z}\in C_{012}$, then $k_1(\vec{z})=0,l_1(\vec{z})=1$ and by Lemma \ref{delta1}, $a_1=\vec{q}_i,\ i\in\{0,1,2\}$. We let $\Omega_1=\Omega,\ \ \Upsilon_1=\{\upsilon\in\Upsilon:\upsilon_1=i\}.$
\end{itemize}
Obviously $\Omega_1$ and $\Upsilon_1$ are cylinders of length $k_1(\vec{z})$ and $l_1(\vec{z})$ respectively, and $d_1(\omega,\upsilon,\vec{z}) = a_1$ for all $\omega\in\Omega_1,\ \upsilon\in\Upsilon_1$. By a cylinder of length $0$ we mean the whole space.

Now suppose we have obtained cylinders $\Omega_n\subseteq\cdots\subseteq\Omega_1\subseteq\Omega$ and $\Upsilon_n\subseteq\cdots\subseteq\Upsilon_1\subseteq\Upsilon$.
$\Omega_n$ and $\Upsilon_n$ are cylinders of length $k_n(\vec{z})$ and $l_n(\vec{z})$ respectively.
For all $\omega\in\Omega_n,\ \upsilon\in\Upsilon_n$, 	we have $d_1(\omega,\upsilon,\vec{z}) = a_1,\cdots, d_n(\omega,\upsilon,\vec{z}) = a_n$ and $\vec{z}_{n+1}= \pi_3(K_\beta^n(\omega,\upsilon,\vec{z})$.

\begin{itemize}[leftmargin=*]
\item[-] If $\vec{z}_{n+1}\in E_i$, then $k_{n+1}(\vec{z})=k_n(\vec{z}),\ l_{n+1}(\vec{z})=l_n(\vec{z})$ and by Lemma \ref{delta1}, $a_{n+1}=\vec{q}_i$. We let $\Omega_{n+1}=\Omega_n,\Upsilon_{n+1}=\Upsilon_n.$
\item[-] If $\vec{z}_{n+1}\in C_{ij},ij \in \{01,12,02\}$, then $k_{n+1}(\vec{z})=k_n(\vec{z})+1,\ l_{n+1}(\vec{z})=l_n(\vec{z})$. We let $\Upsilon_{n+1}=\Upsilon_n.$
 By Lemma \ref{delta1}, $a_{n+1}\in\{\vec{q}_i,\vec{q}_j\}$.
\begin{itemize}
\item If $a_{n+1}=\vec{q}_i$, we let $\Omega_{n+1}=\{\omega\in\Omega_n:\omega_{k_{n+1}}=0\}$.
\item If $a_{n+1}=\vec{q}_j$, we let $\Omega_{n+1}=\{\omega\in\Omega_n:\omega_{k_{n+1}}=1\}$.
\end{itemize}
\item[-] If $\vec{z}_{n+1}\in C_{012}$, then $k_{n+1}(\vec{z})=k_n(\vec{z}),\ l_{n+1}(\vec{z})=l_n(\vec{z})+1$. By Lemma \ref{delta1}, $a_{n+1}=\vec{q}_i,\ i\in\{0,1,2\}$. Let $\Omega_{n+1}=\Omega_n$ and $\Upsilon_{n+1}=\{\upsilon\in\Upsilon_n:\upsilon_{l_{n+1}}=i\}.$
\end{itemize}
We can see that $\Omega_{n+1}$ is a cylinder of length  $k_{n+1}(\vec{z})$ and $\Upsilon_{n+1}$ is a cylinder of length  $l_{n+1}(\vec{z})$. For all $\omega\in\Omega_{n+1},\ \upsilon\in\Upsilon_{n+1}$,
 $$d_1(\omega,\upsilon,\vec{z}) = a_1,\cdots, d_{n+1}(\omega,\upsilon,\vec{z}) = a_{n+1}.$$

If the map $K_\beta$ hits $\Omega\times\Upsilon \times C$ infinitely many times, then $k_n(\vec{z})\rightarrow \infty$ and $\cap\Omega_n$ consists of a single point.
If this happens only finitely many times, then the set $\{k_n(\vec{z}):n\in\mathbb{N}\}$ is finite and $\cap\Omega_n$ is a cylinder set. For $\cap\Upsilon_n$, it has the same property according to how many times $K_\beta$ hits $\Omega\times\Upsilon \times C_{012}$.

In all cases $\cap\Omega_n$ and $\cap\Upsilon_n$ are non-empty, and for $\omega\in \cap\Omega_n, \upsilon\in\cap\Upsilon_n$, we have $d_j(\omega,\upsilon,\vec{z})=a_j$ for all $j\geq 1$.
\end{proof}

\subsection{Unique $K_\beta$-invariant measure of maximal entropy for $\beta\in(1,\beta_*)$}
Equip $\Upsilon$ with the uniform product measure $\mathbb{P}$ and recall that $\sigma'$ is the left shift on $\Upsilon$. On the set $\Omega\times\Upsilon\times S_\beta$ we consider the product $\sigma$-algebra $\mathcal{A\times B \times S}$. Define the function
$\rho_1: \Omega \times \Upsilon\times S_\beta\rightarrow \{\vec{q}_0,\vec{q}_1,\vec{q}_2\}^\mathbb{N}$ by
$$\rho_1(\omega,\upsilon,\vec{z})=(d_1(\omega,\upsilon,\vec{z}), d_2(\omega,\upsilon,\vec{z}),\ldots).$$
Define the function
$\rho_2: \{\vec{q}_0,\vec{q}_1,\vec{q}_2\}^\mathbb{N} \rightarrow \Upsilon$ by
$$\rho_2(\vec{q}_{b_1},\vec{q}_{b_2},\vec{q}_{b_3},\ldots)=(b_1,b_2,b_3,\ldots).$$
Denote by $\varphi=\rho_2\circ\rho_1$ which is the function from $\Omega \times\Upsilon\times S_\beta$ to $\Upsilon$. Then $\varphi\circ K_\beta=\sigma'\circ \varphi$, and $\varphi$ is surjective from Theorem \ref{thm w}.

It is easily seen that $\varphi$ is measurable. In fact, the inverse image of the cylinder set with the first digit fixed is measurable in $\Omega \times\Upsilon\times S_\beta$:
\begin{align*}
\varphi^{-1}(\{(b_1,b_2,\ldots)\in \Upsilon: b_1=0\})=&(\Omega\times\Upsilon\times E_0)\cup(\{\omega\in\Omega:\omega_1=0\}\times\Upsilon\times (C_{01}\cup C_{02}))\\
&\cup(\Omega\times\{\upsilon\in\Upsilon:\upsilon_1=0\}\times C_{012}),\\
\varphi^{-1}(\{(b_1,b_2,\ldots)\in \Upsilon: b_1=1\})=&(\Omega\times\Upsilon\times E_1)\cup(\{\omega\in\Omega:\omega_1=1\}\times\Upsilon\times C_{01})\\
&\cup (\{\omega\in\Omega:\omega_1=0\}\times\Upsilon\times C_{12})\cup(\Omega\times\{\upsilon\in\Upsilon:\upsilon_1=1\}\times C_{012}),\\
\varphi^{-1}(\{(b_1,b_2,\ldots)\in \Upsilon: b_1=2\})=&(\Omega\times\Upsilon\times E_2)\cup(\{\omega\in\Omega:\omega_1=1\}\times\Upsilon\times (C_{02}\cup C_{12}))\\
&\cup(\Omega\times\{\upsilon\in\Upsilon:\upsilon_1=2\}\times C_{012}).
\end{align*}

To show that $\varphi$ is an isomorphism, let
\begin{align*}
Z_1&=\{(\omega,\upsilon,\vec{z})\in\Omega\times \Upsilon\times S_\beta: K_\beta^n(\omega,\upsilon,\vec{z})\in\Omega\times \Upsilon\times C \text{ infinitely often}\},\\
Z_2&=\{(\omega,\upsilon,\vec{z})\in\Omega\times \Upsilon\times S_\beta: K_\beta^n(\omega,\upsilon,\vec{z})\in\Omega\times \Upsilon\times C_{012} \text{ infinitely often}\},\\
D_1&=\{(b_1,b_2,\ldots)\in \Upsilon: \sum_{i=1}^\infty\frac{\vec{q}_{b_{j+i-1}}}{\beta^i}\in C \text{ for infinitely many } j\text{'s}\},\\
D_2&=\{(b_1,b_2,\ldots)\in \Upsilon: \sum_{i=1}^\infty\frac{\vec{q}_{b_{j+i-1}}}{\beta^i}\in C_{012} \text{ for infinitely many } j\text{'s}\}.
\end{align*}
Notice that
$$ Z_1=\cap_{n=1}^\infty \cup_{m=n}^\infty K_\beta^{-m}(\Omega\times \Upsilon \times C)$$
and
$$ Z_2=\cap_{n=1}^\infty \cup_{m=n}^\infty K_\beta^{-m}(\Omega\times \Upsilon \times C_{012}),$$
which imply that $Z_1$ and $Z_2$ are Borel sets in $\Omega\times \Upsilon \times S_\beta$.
Let $Z=Z_1\cap Z_2,\ D=D_1\cap D_2$,
then we have $K_\beta^{-1}(Z)=Z,\ (\sigma')^{-1}(D)=D$ and $\varphi(Z)=D$. Let $\varphi'=\varphi|_Z$.

\begin{lemma}\label{bi'}
The map $\varphi':Z\rightarrow D$ is a bimeasurable bijection.
\end{lemma}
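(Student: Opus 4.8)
The plan is to verify the four ingredients of a bimeasurable bijection separately: $\varphi'$ is Borel, injective, surjective onto $D$, and has a Borel inverse. Borel measurability of $\varphi'$ is inherited from $\varphi$: the computation preceding the lemma exhibits the $\varphi$-preimage of each first-coordinate cylinder $\{b_1=i\}$ as a finite union of products of cylinders with the Borel pieces $E_i$, $C_{ij}$, $C_{012}$, so $\varphi$ is Borel on all of $\Omega\times\Upsilon\times S_\beta$; since $Z=Z_1\cap Z_2$ is Borel (via $Z_j=\cap_n\cup_{m\ge n}K_\beta^{-m}(\cdots)$), the restriction $\varphi'=\varphi|_Z$ is Borel as well. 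Surjectivity onto $D$ is the content of the asserted identity $\varphi(Z)=D$: given $b=(b_i)\in D$, put $\vec z=\sum_{i\ge1}\vec q_{b_i}\beta^{-i}$, which is a representation of a point of $S_\beta$, and apply Theorem~\ref{thm w} to produce $\omega,\upsilon$ with $d_i(\omega,\upsilon,\vec z)=\vec q_{b_i}$, hence $\varphi(\omega,\upsilon,\vec z)=b$; the membership $b\in D_1\cap D_2$ forces the $K_\beta$-orbit of $(\omega,\upsilon,\vec z)$ to meet $\Omega\times\Upsilon\times C$ and $\Omega\times\Upsilon\times C_{012}$ infinitely often, so $(\omega,\upsilon,\vec z)\in Z$.

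The crux is injectivity, which I would obtain by a deterministic reconstruction of $(\omega,\upsilon,\vec z)$ from its image. Suppose $\varphi'(\omega,\upsilon,\vec z)=\varphi'(\omega',\upsilon',\vec z')=b$. Since the common digit sequence determines the third coordinate through $\vec z=\sum_{i\ge1}d_i(\omega,\upsilon,\vec z)\beta^{-i}$, we get $\vec z=\vec z'$. Now run the map forward using only $\vec z$ and $b$: setting $\vec z_1=\vec z$ and $\vec z_{n+1}=\beta\vec z_n-\vec q_{b_n}$ recovers the entire orbit $(\vec z_n)$, and with it the region $E_i$, $C_{ij}$, or $C_{012}$ containing each $\vec z_n$. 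At a visit to some $C_{ij}$ the known digit $\vec q_{b_n}$ equals $\vec q_i$ or $\vec q_j$ precisely according as the next unused coordinate of $\omega$ is $0$ or $1$, so that coordinate is read off; at a visit to $C_{012}$ the digit $\vec q_{b_n}=\vec q_i$ reveals the next unused coordinate of $\upsilon$ to be $i$; at a visit to $E_i$ no coordinate is consumed. Because $(\omega,\upsilon,\vec z)\in Z_1\cap Z_2$, there are infinitely many visits to $C$ and infinitely many to $C_{012}$, so every coordinate of $\omega$ and of $\upsilon$ is recovered from $b$ alone. Hence $\omega=\omega'$ and $\upsilon=\upsilon'$, and $\varphi'$ is one-to-one. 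This is exactly where the restriction to $Z$ is indispensable: off $Z_1$ or $Z_2$ some coordinates of $\omega$ or $\upsilon$ are never consumed, and the map fails to be injective.

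Finally, for bimeasurability of the inverse I would invoke the Lusin--Souslin theorem in the form of Theorem~\ref{polish}. The space $\Omega\times\Upsilon\times S_\beta$ is Polish, being a product of the Polish spaces $\Omega$, $\Upsilon$ and the compact set $S_\beta\subset\mathbb R^2$, and $Z$ is a Borel subset of it. Applying Theorem~\ref{polish} to the injective Borel map $\varphi'$ on the Borel set $Z$ shows first that $\varphi'(Z)=D$ is Borel, and, applied to an arbitrary Borel $B\subseteq Z$, that $\varphi'(B)=((\varphi')^{-1})^{-1}(B)$ is Borel; thus $(\varphi')^{-1}:D\to Z$ is measurable. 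Combined with the previous two paragraphs, this shows $\varphi'$ is a bimeasurable bijection. The only genuine difficulty is the injectivity step, where the bookkeeping of which index of $\omega$ or $\upsilon$ is consumed at each visit must be matched correctly against the $Z_1\cap Z_2$ condition; the remaining points are formal once the Polish-space framework and Theorem~\ref{thm w} are in place.
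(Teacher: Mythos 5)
Your proposal is correct and follows essentially the same route as the paper: the key step in both is the deterministic reconstruction of the coordinates of $\omega$ and $\upsilon$ from the digit sequence by tracking the successive visit times of the orbit $\vec z_{n+1}=\beta\vec z_n-\vec q_{b_n}$ to $C$ and to $C_{012}$ (the paper packages this as an explicit construction of $(\varphi')^{-1}$ on $D$, you package it as injectivity plus surjectivity via Theorem~\ref{thm w}), and both conclude measurability of the inverse from Theorem~\ref{polish}. Your write-up is if anything slightly more careful about why the constructed preimage actually lands in $Z$ and why the restriction to $Z$ is what makes the reconstruction exhaustive.
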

\begin{proof}
For any sequence $(b_1,b_2,\ldots)\in D $, we can obtain a point $\vec{z}=\sum_{i=1}^\infty \vec{q}_{b_i}\beta^{-i}$.
To determine $\omega$ and $\upsilon$, we could define
\begin{align*}
&r_1=\min\{j\geq 1:\sum_{i=1}^\infty\frac{\vec{q}_{b_{j+i-1}}}{\beta^i}\in C\},\ \
r_k=\min\{j>r_{k-1}:\sum_{i=1}^\infty\frac{\vec{q}_{b_{j+i-1}}}{\beta^i}\in C\},\\
&s_1=\min\{j\geq 1:\sum_{i=1}^\infty\frac{\vec{q}_{b_{j+i-1}}}{\beta^i}\in C_{012}\},\ \
s_k=\min\{j>s_{k-1}:\sum_{i=1}^\infty\frac{\vec{q}_{b_{j+i-1}}}{\beta^i}\in C_{012}\}.
\end{align*}

\begin{itemize}[leftmargin=*]
\item[-] If $\sum_{i=1}^\infty\vec{q}_{b_{r_k+i-1}}\beta^{-i} \in C_{ij}, ij\in\{01,12,02\}$, then
$b_{r_k}\in\{i,j\}$ by Lemma \ref{delta1}.
\begin{itemize}
\item If $b_{r_k}=i$, we let $\omega_k=0$.
\item If $b_{r_k}=j$, we let $\omega_k=1$.
\end{itemize}
\item[-] If $\sum_{i=1}^\infty\vec{q}_{b_{s_k+i-1}}\beta^{-i} \in  C_{012}$, then $b_{s_k}\in\{0,1,2\}$ by Lemma \ref{delta1}. If $b_{s_k}=i$, we let $\upsilon_k=i, i\in\{0,1,2\}$.
\end{itemize}
Notice that for any $N>0$, there exsit $r,s>N$ such that
$$\sum_{i=1}^\infty \vec{q}_{b_{r+i-1}}\beta^{-i}\in C \text{ and }\sum_{i=1}^\infty \vec{q}_{b_{s+i-1}}\beta^{-i}\in C_{012},$$
which implies that $K_\beta^n(\omega,\upsilon,\vec{z})$ hits both $\Omega\times \Upsilon\times C$ and $\Omega\times \Upsilon\times C_{012}$ infinitely often.
Then the infinite sequences $\omega=(\omega_1,\omega_2,\omega_3,\ldots)\in\Omega$ and $\upsilon=(\upsilon_1,\upsilon_2,\upsilon_3,\ldots)\in\Upsilon$ can be uniquely determined.
Therefore, we can define the inverse of $\varphi'$.
Let $(\varphi')^{-1}:D\rightarrow Z$ be
$$(\varphi')^{-1}((b_1,b_2,\ldots))=(\omega,\upsilon,\sum_{i=1}^\infty\frac{\vec{q}_{b_i}}{\beta^i}).$$
If $(\omega,\upsilon,\vec{z})=(\omega',\upsilon',\vec{z'})$ then $\varphi'(\omega,\upsilon,\vec{z})=\varphi'(\omega',\upsilon',\vec{z'})$.
Since $Z=Z_1\cap Z_2$ is a Borel set in $\Omega\times\Upsilon\times S_\beta$, then we have that $(\varphi')^{-1}$ is measurable by Theorem \ref{polish}.
Hence $\varphi'$ is a bimeasurable bijection.
\end{proof}

Let $\beta_*\approx 1.4656$ be the root of $x^3-x^2-1=0$.
\begin{lemma}\label{m}
If $1<\beta<\beta_*$, then $\mathbb{P}(D)=1$.
\end{lemma}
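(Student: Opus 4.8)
The plan is to handle $D_1$ and $D_2$ separately and then intersect, reducing each to a recurrence statement for the shift $\sigma'$ on the Bernoulli system $(\Upsilon,\mathcal{B},\mathbb{P},\sigma')$. To this end I would introduce the coding map $\Pi:\Upsilon\to S_\beta$, $\Pi(b)=\sum_{i=1}^\infty \vec{q}_{b_i}\beta^{-i}$, which is continuous (hence Borel measurable) and satisfies $\Pi\circ\sigma'(b)=\beta\,\Pi(b)-\vec{q}_{b_1}$. With this notation the point $\sum_{i=1}^\infty \vec{q}_{b_{j+i-1}}\beta^{-i}$ appearing in the definitions of $D_1$ and $D_2$ is exactly $\Pi\big((\sigma')^{j-1}b\big)$, so that
\begin{align*}
D_1&=\{b\in\Upsilon:(\sigma')^{j-1}b\in\Pi^{-1}(C)\text{ for infinitely many }j\},\\
D_2&=\{b\in\Upsilon:(\sigma')^{j-1}b\in\Pi^{-1}(C_{012})\text{ for infinitely many }j\}.
\end{align*}
Both are $\limsup$ (tail) events, and in particular $\sigma'$-invariant, so by ergodicity their measures are $0$ or $1$.

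Next I would invoke ergodicity quantitatively. Since $\mathbb{P}$ is the uniform product measure, $(\Upsilon,\mathcal{B},\mathbb{P},\sigma')$ is a Bernoulli shift and hence ergodic, so Birkhoff's ergodic theorem applied to the indicator $\mathbf{1}_{\Pi^{-1}(C_{012})}$ gives, for $\mathbb{P}$-a.e. $b$,
\begin{align*}
\lim_{N\to\infty}\frac{1}{N}\sum_{j=0}^{N-1}\mathbf{1}_{\Pi^{-1}(C_{012})}\big((\sigma')^{j}b\big)=\mathbb{P}\big(\Pi^{-1}(C_{012})\big).
\end{align*}
Consequently, if $\mathbb{P}(\Pi^{-1}(C_{012}))>0$ then the orbit of a.e. $b$ meets $\Pi^{-1}(C_{012})$ infinitely often, i.e. $\mathbb{P}(D_2)=1$; the identical argument with $C$ replacing $C_{012}$ yields $\mathbb{P}(D_1)=1$, whence $\mathbb{P}(D)=\mathbb{P}(D_1\cap D_2)=1$. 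This reduces the whole lemma to the two positivity statements $\mathbb{P}(\Pi^{-1}(C))>0$ and $\mathbb{P}(\Pi^{-1}(C_{012}))>0$.

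The remaining and genuinely delicate point is the positivity of $\mathbb{P}(\Pi^{-1}(C_{012}))=(\Pi_*\mathbb{P})(C_{012})$. Here I would observe that $\Pi_*\mathbb{P}$ is the self-similar measure $\mu=\tfrac13\sum_{i=0}^2\mu\circ f_{\vec{q}_i}^{-1}$, whose support is the entire attractor $S_\beta=\Delta$. Since a measure of full support assigns positive mass to every nonempty open set, it suffices to check that $C_{012}$ has nonempty interior. Solving the defining inequalities $x\ge 1/\beta$, $y\ge 1/\beta$, $x+y\le 1/(\beta(\beta-1))$ shows that $C_{012}$ is a nondegenerate triangle with legs of length $(3-2\beta)/(\beta(\beta-1))$, which is strictly positive exactly when $\beta<3/2$; hence $(\Pi_*\mathbb{P})(\mathrm{int}\,C_{012})>0$. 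The same (easier) reasoning applies to $C=C_{01}\cup C_{12}\cup C_{02}$, which evidently has nonempty interior. This positivity therefore holds throughout $1<\beta<3/2$, in particular for $1<\beta<\beta_*$ (where $\beta_*$, the root of $x^3-x^2-1$, is the value at which $1/(\beta(\beta-1))=\beta$).

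The hard part, and the step I expect to require the most care, is exactly this positivity input: one must be certain that $C_{012}$ is genuinely two-dimensional, which is where the upper bound on $\beta$ is used, and one must justify the full-support property of $\mu$. If one prefers a self-contained argument avoiding the general support theorem for self-similar measures, the alternative is to exhibit a finite word $w$ with $f_{w}(\Delta)\subseteq C_{012}$; this is possible once $\mathrm{int}\,C_{012}\neq\emptyset$, because the cylinders $f_{w}(\Delta)$ are similar copies of $\Delta$ with diameters shrinking like $\beta^{-|w|}$ and with lower-left corners dense in $\Delta$, so some sufficiently long word fits inside $C_{012}$. Such a containment gives $[w]\subseteq\Pi^{-1}(C_{012})$ with $\mathbb{P}([w])=3^{-|w|}>0$, supplying the required positivity directly.
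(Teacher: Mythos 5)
Your proof is correct, but it takes a genuinely different route from the paper. The paper argues by hand: it exhibits an explicit finite word ($122\,0^{N-3}$ for $C_{012}$ and $1\,0^{L-1}$ for $C_{01}$) whose associated cylinder lands in the target region regardless of the tail, and then shows by a direct independence/Borel--Cantelli computation on blocks that $\mathbb{P}$-a.e.\ sequence contains that word infinitely often; the hypothesis $\beta<\beta_*$ enters precisely in verifying that the chosen word $122\,0^{N-3}$ really lands in $C_{012}$ (it is equivalent to $\frac1\beta+\frac1{\beta^2}+\frac1{\beta^3}<\frac{1}{\beta(\beta-1)}$). You instead observe that $D_1$ and $D_2$ are shift-invariant $\limsup$ sets, invoke ergodicity of the Bernoulli shift together with Birkhoff's theorem to reduce everything to $\mathbb{P}(\Pi^{-1}(C))>0$ and $\mathbb{P}(\Pi^{-1}(C_{012}))>0$, and obtain positivity from the full support of the self-similar measure $\Pi_*\mathbb{P}$ plus the fact that $C_{012}$ has nonempty interior for $\beta<3/2$; your identities ($\Pi\circ\sigma'(b)=\beta\Pi(b)-\vec q_{b_1}$, the leg length $(3-2\beta)/(\beta(\beta-1))$, and the characterization of $\beta_*$ via $\beta^2(\beta-1)=1$) all check out. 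Your approach buys three things: it is conceptually shorter, it shows the lemma actually holds on the larger range $1<\beta<3/2$ (the restriction to $\beta<\beta_*$ is an artifact of the paper's particular word), and it transfers verbatim to an arbitrary nondegenerate product measure $m_2$, which is exactly the extension asserted without proof in Remark 4.7(i). The paper's approach buys elementarity: it needs only a cylinder-measure computation and no ergodic theorem or support theory for self-similar measures. Your closing ``self-contained alternative'' --- finding a word $w$ with $f_w(\Delta)\subseteq C_{012}$ and using $\mathbb{P}([w])=3^{-|w|}>0$ --- is essentially the paper's strategy in abstract form, minus its explicit choice of $w$.
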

\begin{proof}
Let us first prove $\mathbb{P}(D_2)=1$.
For any sequence $(b_1,b_2,\ldots)\in \Upsilon$ and $n\geq 4$, define
$$\vec{z}_{n}=(x_{n},y_{n})=\frac{\vec{q}_1}{\beta}+\frac{\vec{q}_2}{\beta^2}+\frac{\vec{q}_2}{\beta^3}+\frac{\vec{q}_0}{\beta^4}+\cdots+\frac{\vec{q}_0}{\beta^n}+\frac{\vec{q}_{b_1}}{\beta^{n+1}}+\frac{\vec{q}_{b_2}}{\beta^{n+2}}+\cdots.$$
Then we have $x_n\geq\frac{1}{\beta}$, and
\begin{align*}
y_n&\geq \frac{1}{\beta^{2}}+\frac{1}{\beta^{3}},\\
x_n+y_n &\leq \sum_{i=1}^3\frac{1}{\beta^{i}}+\sum_{i=1}^\infty\frac{1}{\beta^{n+i}}.
\end{align*}
If $1<\beta<\beta_*$, then
$$
\frac{1}{\beta^{2}}+\frac{1}{\beta^{3}}\geq \frac{1}{\beta},$$
and
$$\frac{1}{\beta}+\frac{1}{\beta^{2}}+\frac{1}{\beta^{3}}<\frac{1}{\beta(\beta-1)}.$$
 Since $\sum_{i=1}^\infty\frac{1}{\beta^{n+i}}=\frac{1}{\beta^n(\beta-1)}\rightarrow 0$ as $n\rightarrow \infty$, there exists an integer $N>3$ such that for any $n\geq N$,
\begin{align*}
y_n&\geq \frac{1}{\beta},\\
x_n+y_n &\leq \frac{1}{\beta(\beta-1)}.
\end{align*}
It follows that $(x_n,y_n)\in C_{012}$ for any $n\geq N$.

Let $$\boldsymbol{b}=122\underbrace{00\ldots0}_{N-3 \text{ times}}.$$
Let
\begin{align*}
D'&=\{(b_1,b_2,\ldots)\in \Upsilon: b_jb_{j+1}\ldots b_{j+N-1}=\boldsymbol{b} \text{ for infinitely many } j\text{'s}\}
\end{align*}
then $D'\subset D_2$. Now we show that $\mathbb{P}(D')=1$.
Notice that $D'=\cap_{n=1}^\infty\cup_{m=n}^\infty D_m,$ where $D_m=\{(b_1,b_2,\ldots)\in\Upsilon:b_mb_{m+1}\ldots b_{m+N-1}=\boldsymbol{b}\}.$ Let $B_n=\cup_{m=n}^\infty D_m$.
If $(b_1,b_2,\ldots)\in\Upsilon\setminus B_n$, then we have that for any $j\geq n$,
$b_jb_{j+1}\ldots b_{j+N-1}\neq \boldsymbol{b}.$ Clearly,
$$\Upsilon\setminus B_n\subseteq B':=\{(b_1,b_2,\ldots): b_{n+kN}\ldots b_{n+(k+1)N-1}\neq \boldsymbol{b}, k=0,1,2,\ldots\}.$$
Since $\mathbb{P}(\Upsilon\setminus B_n)\leq\mathbb{P}(B')=\lim_{k\rightarrow\infty}(1-1/3^N)^k=0$, then $\mathbb{P}(B_n)=1$.
It follows from $D'=\cap_{n=1}^\infty B_n$ and $B_1\supseteq B_2\supseteq\cdots$ that $\mathbb{P}(D') =\lim_{n\rightarrow\infty}B_n=1$. Then we get $\mathbb{P}(D_2)=1.$

Similarly, define
$$\vec{z}_l=(x_l,y_l)=\frac{\vec{q}_1}{\beta}+\frac{\vec{q}_0}{\beta^2}+\cdots+\frac{\vec{q}_0}{\beta^l}+\frac{\vec{q}_{b_1}}{\beta^{l+1}}+\frac{\vec{q}_{b_2}}{\beta^{l+2}}+\cdots.$$
Then we have $ x_l\geq \frac{1}{\beta}$, and
\begin{align*}
0 &\leq y_l \leq \sum_{i=1}^\infty\frac{1}{\beta^{l+i}}=\frac{1}{\beta^l(\beta-1)},\\
\frac{1}{\beta}&\leq x_l+y_l \leq \frac{1}{\beta}+\sum_{i=1}^\infty\frac{1}{\beta^{l+i}}=\frac{1}{\beta}+\frac{1}{\beta^l(\beta-1)}.
\end{align*}
Since $\lim_{l\rightarrow \infty}\frac{1}{\beta^l(\beta-1)}=0$, then there exists $L>0$ such that for any $l\geq L$,
\begin{equation*}\label{yz}
0 \leq y_l < \frac{1}{\beta},  x_l+y_l \leq \frac{1}{\beta(\beta-1)}.
\end{equation*}
It follows that $(x_l,y_l)\in C_{01}$ for any $l\geq L$. Let
\begin{equation}\label{PD''}
D''=\{(b_1,b_2,\ldots)\in \Upsilon: b_jb_{j+1}\ldots b_{j+L-1}=1\underbrace{00\ldots 0}_{L-1 \text{ times }} \text{ for infinitely many } j\text{'s}\},
\end{equation}
then $D''\subset D_1$. Since $\mathbb{P}(D'')=1$, we have $\mathbb{P}(D_1)=1$.
Therefore, $\mathbb{P}(D)=1$.
\end{proof}

The following theorem can be obtained from Lemmas \ref{bi'} and \ref{m}.
\begin{theorem}\label{4.6}
For $\beta\in(1, \beta_*]$, the dynamical systems
$(\Omega\times\Upsilon\times S_\beta, \mathcal{A\times B \times S},\nu_\beta, K_\beta)$ and
$(\Upsilon,\mathcal{B},\mathbb{P},\sigma')$ are isomorphic, where $\nu_\beta (A)=\mathbb{P}(\varphi(Z \cap A))$.
\end{theorem}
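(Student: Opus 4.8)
The plan is to verify directly the four defining conditions of isomorphism, taking $\psi=\varphi'$ as the conjugating map and declaring the exceptional null sets to be $N=(\Omega\times\Upsilon\times S_\beta)\setminus Z$ and $M=\Upsilon\setminus D$. First I would record that these sets are indeed null. For $M$ this is immediate: Lemma~\ref{m} gives $\mathbb{P}(D)=1$, so $\mathbb{P}(M)=0$. For $N$ I use the very definition $\nu_\beta(A)=\mathbb{P}(\varphi(Z\cap A))$: since $Z\cap N=\emptyset$, we get $\nu_\beta(N)=\mathbb{P}(\varphi(\emptyset))=0$. I would also confirm at this stage that $\nu_\beta$ is a bona fide probability measure: $\nu_\beta(\Omega\times\Upsilon\times S_\beta)=\mathbb{P}(\varphi(Z))=\mathbb{P}(D)=1$ by Lemma~\ref{m}, while countable additivity follows because $\varphi'=\varphi|_Z$ is a bijection (Lemma~\ref{bi'}), so it sends a disjoint union $\bigcup_n(Z\cap A_n)$ to the disjoint union $\bigcup_n\varphi'(Z\cap A_n)$, and images of Borel sets under the bimeasurable bijection $\varphi'$ are again Borel.

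Next, the forward invariance $K_\beta(Z)\subseteq Z$ and $\sigma'(D)\subseteq D$ I would deduce from the already-established identities $K_\beta^{-1}(Z)=Z$ and $(\sigma')^{-1}(D)=D$ (in fact $\sigma'(D)=D$, using surjectivity of $\sigma'$ on $\Upsilon$, and correspondingly $K_\beta|_Z$ is onto $Z$ by the conjugacy). Then the four conditions follow quickly: conditions (i) and (ii)---that $\varphi'$ is a measurable bijection with measurable inverse---are exactly the content of Lemma~\ref{bi'}; condition (iv), $\varphi'\circ K_\beta=\sigma'\circ\varphi'$ on $Z$, is the restriction to $Z$ of the intertwining relation $\varphi\circ K_\beta=\sigma'\circ\varphi$ established before the lemmas. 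For condition (iii) I would compute, for a measurable $G\subseteq D$, that $\nu_\beta((\varphi')^{-1}(G))=\mathbb{P}(\varphi(Z\cap(\varphi')^{-1}(G)))=\mathbb{P}(\varphi'((\varphi')^{-1}(G)))=\mathbb{P}(G)$, using $(\varphi')^{-1}(G)\subseteq Z$ and the surjectivity of $\varphi'$ onto $D$.

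It remains to know that $K_\beta$ actually preserves $\nu_\beta$, so that the quadruple is a genuine dynamical system. I would obtain this by transporting the $\sigma'$-invariance of $\mathbb{P}$ through the conjugacy: for measurable $A$ one checks $\varphi'(Z\cap K_\beta^{-1}(A))=(\sigma')^{-1}(\varphi'(Z\cap A))$ (using bijectivity of $\varphi'$ on $Z$, relation (iv), and $(\sigma')^{-1}(D)=D$), whence $\nu_\beta(K_\beta^{-1}(A))=\mathbb{P}((\sigma')^{-1}(\varphi'(Z\cap A)))=\mathbb{P}(\varphi'(Z\cap A))=\nu_\beta(A)$.

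The steps above are essentially measure-theoretic bookkeeping once Lemmas~\ref{bi'} and~\ref{m} are in hand, so I do not expect a serious obstacle in the interior range $1<\beta<\beta_*$. The one place that genuinely needs care is the endpoint $\beta=\beta_*$ asserted in the statement: Lemma~\ref{m} is proved only for $\beta<\beta_*$, since the inequality $\beta^{-1}+\beta^{-2}+\beta^{-3}<(\beta(\beta-1))^{-1}$ degenerates to equality at $\beta=\beta_*$, so the normalization $\mathbb{P}(D)=1$ is not immediately available there. I would therefore treat $\beta=\beta_*$ separately, either by exhibiting a different admissible word whose cylinder is visited infinitely often $\mathbb{P}$-almost surely and whose shifts land in $C_{012}$ (respectively in $C$) at $\beta=\beta_*$, thereby re-establishing $\mathbb{P}(D)=1$, or by a limiting argument, before concluding the isomorphism on the full range $(1,\beta_*]$.
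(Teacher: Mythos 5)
Your proposal is correct and follows essentially the same route as the paper, which offers no written proof at all but simply derives the theorem from Lemmas~\ref{bi'} and~\ref{m}; the measure-theoretic bookkeeping you spell out (the null sets $N$ and $M$, the four isomorphism conditions via $\varphi'$, and the transport of $\sigma'$-invariance of $\mathbb{P}$ to $K_\beta$-invariance of $\nu_\beta$) is exactly what is left implicit there. Your remark about the endpoint is well taken and is a genuine gap in the paper rather than in your argument: Lemma~\ref{m} is stated only for $1<\beta<\beta_*$, its word $122\,0\cdots0$ indeed fails at $\beta=\beta_*$ because $\beta^{-1}+\beta^{-2}+\beta^{-3}=(\beta(\beta-1))^{-1}$ there, and the paper's own final uniqueness theorem quietly retreats to the open interval $(1,\beta_*)$; your proposed repair (choosing a finite word $w$ with $f_w(\Delta)$ contained in the interior of $C_{012}$, which is still nonempty at $\beta=\beta_*$, and similarly for $C$) does restore $\mathbb{P}(D)=1$ at the endpoint.
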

\begin{remark}\label{remark}
(i) Notice that if we change $\mathbb{P}$ to the general product measure $m_2$ on $\Upsilon$,  Lemma \ref{m} and Theorem \ref{4.6} remain true.\\
(ii) Since $\mathbb{P}$ is the unique measure of maximal entropy on $\Upsilon$, the above theorem implies that any other $K_\beta$-invariant measure with support $Z$ has entropy strictly less than $\log 3$.
We now investigate the entropy of $K_\beta$-invariant measure $\mu$ for which $\mu(Z^c)>0$.
\end{remark}

Divide $Z^c$ into three Borel sets as follows:
\begin{align*}
Z^c&=(Z_1\cap Z_2)^c\\
   &=Z_1^c \cup Z_2^c\\
   &=(Z_1^c \setminus Z_2^c)\cup(Z_2^c \setminus Z_1^c)\cup(Z_1^c \cap Z_2^c)\\
   &:=Z_3 \cup  Z_4 \cup Z_5,
\end{align*}
where
\begin{align*}
Z_3=&(Z_1^c \setminus Z_2^c)\\
=&\{(\omega,\upsilon,\vec{z})\in\Omega\times \Upsilon\times S_\beta: K_\beta^n(\omega,\upsilon,\vec{z})\in\Omega\times \Upsilon\times C \text{ for finitely many } n\text{'s}\\
&\text{and } K_\beta^n(\omega,\upsilon,\vec{z})\in\Omega\times \Upsilon\times C_{012} \text{ infinitely often}\},\\
Z_4=&(Z_2^c \setminus Z_1^c)\\
=&\{(\omega,\upsilon,\vec{z})\in\Omega\times \Upsilon\times S_\beta: K_\beta^n(\omega,\upsilon,\vec{z})\in\Omega\times \Upsilon\times C_{012} \text{ for finitely many } n\text{'s},\\
&\text{and } K_\beta^n(\omega,\upsilon,\vec{z})\in\Omega\times \Upsilon\times C \text{ infinitely often}\},\\
Z_5=&(Z_1^c \cap Z_2^c)\\
=&\{(\omega,\upsilon,\vec{z})\in\Omega\times \Upsilon\times S_\beta: K_\beta^n(\omega,\upsilon,\vec{z}) \in\Omega\times \Upsilon\times C \text{ for finitely many } n\text{'s}, \\
&\text{ and } K_\beta^n(\omega,\upsilon,\vec{z})\in\Omega\times \Upsilon\times C_{012} \text{ for finitely many } n\text{'s}\}.
\end{align*}

We first prove the following lemma.
\begin{lemma}\label{entropy3}
Let $\beta\in(1, \beta_*)$. Let $\mu_3$ be a $K_\beta$-invariant measure for which $\mu_3(Z_3)=1$. Then $h_{\mu_3}(K_\beta)< \log 3$. Similarly, let $\mu_4$ and $\mu_5$ be $K_\beta$-invariant measures for which $\mu_4(Z_4)=\mu_5(Z_5)=1$. Then $h_{\mu_4}(K_\beta),h_{\mu_5}(K_\beta)< \log 3$ also holds.
\end{lemma}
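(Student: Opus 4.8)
The plan is to treat $\mu_3,\mu_4,\mu_5$ in parallel, exploiting that each lives on a set where at least one of the two coins is flipped only finitely often. The guiding idea is that a coordinate whose coin is flipped finitely often is, after discarding a null set, genuinely frozen and therefore carries no entropy, so that what survives is a full-shift factor whose invariant measure can be shown to be non-uniform, hence of entropy below $\log 3$.

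First I would record a recurrence reduction. For $\mu_3$ we have $Z_3\subseteq Z_1^c$, so $\mu_3(Z_1)=0$. Put $A=\Omega\times\Upsilon\times C$; by definition $Z_1=\{x:K_\beta^n x\in A\text{ i.o.}\}$, so Poincar\'e recurrence for the $\mu_3$-preserving map $K_\beta$ forces $\mu_3(A)=0$ (almost every point of $A$ returns to $A$ infinitely often, hence lies in the null set $Z_1$). By invariance $\mu_3(\cup_n K_\beta^{-n}A)=0$, so for $\mu_3$-a.e. point the $\vec z$-orbit never meets $C$; consequently the coordinate $\omega$ is never shifted, i.e. $\omega$ is $K_\beta$-invariant $\mu_3$-a.e. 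The identical argument with $A=\Omega\times\Upsilon\times C_{012}$ and $Z_2$ shows that for $\mu_4$ the coordinate $\upsilon$ is frozen, and for $\mu_5$ (where $Z_5\subseteq Z_1^c\cap Z_2^c$) both $\omega$ and $\upsilon$ are frozen.

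Next I would pass to the reduced system. For $\mu_3$, let $\pi(\omega,\upsilon,\vec z)=(\upsilon,\vec z)$ and let $\bar K$ be the induced map on $\Upsilon\times S_\beta$, which is well defined off the $\pi_*\mu_3$-null set $\Upsilon\times C$ where the action on $(\upsilon,\vec z)$ does not depend on $\omega$. Since $\omega$ is $K_\beta$-invariant and the remaining dynamics is independent of it, the fibres of $\pi$ are invariant and the induced fibre maps are the identity, so the relative entropy over $\pi$ vanishes and $h_{\mu_3}(K_\beta)=h_{\pi_*\mu_3}(\bar K)$. This is the step I expect to be the most delicate to write carefully; it can be justified either by the Abramov--Rokhlin formula or, self-containedly, by disintegrating $\mu_3$ over the invariant $\omega$-coordinate and using affinity of $\nu\mapsto h_\nu(\bar K)$ together with the fact that conditioning on an invariant coordinate yields $\bar K$-invariant conditionals. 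On $\Upsilon\times S_\beta$ the digit map becomes essentially injective: define $\bar\varphi(\upsilon,\vec z)=(b_1,b_2,\dots)$ by reading the digit indices; one recovers $\vec z=\sum_i\vec q_{b_i}\beta^{-i}$, and, because $Z_3\subseteq Z_2$ makes the orbit meet $C_{012}$ infinitely often for $\pi_*\mu_3$-a.e. point, one recovers every coordinate of $\upsilon$ from the successive $C_{012}$-visits. Exactly as in Lemma \ref{bi'} (invoking Theorem \ref{polish}), $\bar\varphi$ is a bimeasurable bijection on a full set conjugating $\bar K$ to $\sigma'$, so $h_{\mu_3}(K_\beta)=h_\eta(\sigma')$ with $\eta:=\bar\varphi_*\pi_*\mu_3$.

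Finally I would show $\eta\neq\mathbb P$ and invoke uniqueness of the measure of maximal entropy on the full $3$-shift. Since $\mu_3$-a.e. orbit avoids $C$, every suffix sum $\sum_i\vec q_{b_{j+i-1}}\beta^{-i}$ of an $\eta$-typical digit string avoids $C$, whence $\eta(D_1)=0$; but $\beta<\beta_*$ gives $\mathbb P(D_1)=1$ by Lemma \ref{m}. Thus $\eta\neq\mathbb P$, and as $\mathbb P$ is the unique measure of maximal entropy of $(\Upsilon,\sigma')$, with value $\log 3$, we conclude $h_{\mu_3}(K_\beta)=h_\eta(\sigma')<\log 3$. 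The cases $\mu_4$ and $\mu_5$ follow by the same template after the appropriate reduction: for $\mu_4$ one freezes $\upsilon$, reduces to a map on $\Omega\times S_\beta$ (where in each $C_{ij}$ only two digits occur, so one even has the cruder bound $\le\log 2$), and uses $\eta'(D_2)=0\neq 1=\mathbb P(D_2)$; for $\mu_5$ one freezes both coordinates, reduces to a deterministic map on $S_\beta$, and uses $\eta''(D_1)=0\neq 1=\mathbb P(D_1)$. In every case the reduced shift measure differs from $\mathbb P$, yielding entropy strictly below $\log 3$.
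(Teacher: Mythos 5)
Your argument is correct and is essentially the paper's proof: restrict to the full-measure set on which the relevant coin is never consulted, discard the frozen coordinate(s) at no cost in entropy, conjugate what remains to the full $3$-shift via the digit map (injectivity coming from the infinitely many visits to the surviving switch region), and rule out $\mathbb P$ as the image measure by comparison with the full-measure sets of Lemma \ref{m}. Two of your steps are genuine (minor) variants worth noting: you obtain the reduction $\mu_3(\Omega\times\Upsilon\times C)=0$ by Poincar\'e recurrence, where the paper instead constructs the set $H_3$ with $\bigcup_n K_\beta^{-n}(\Omega\times\Upsilon\times H_3)=Z_3$ and takes a limit --- both work, and yours is arguably cleaner; and you justify $h_{\mu_3}(K_\beta)=h_{\pi_*\mu_3}(\bar K)$ via Abramov--Rokhlin/disintegration (or, equivalently, subadditivity of entropy for the product map $I_\Omega\times\bar K$ plus the factor inequality), which is more careful than the paper's one-line appeal to ``independence'' of $\omega$ and $(\upsilon,\vec z)$ at the same spot. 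One correction: the parenthetical claim for $\mu_4$ that ``in each $C_{ij}$ only two digits occur, so one even has the cruder bound $\le\log 2$'' does not follow and should be deleted. The number of admissible digits available at each step does not bound the entropy of the digit shift --- the deterministic greedy map already illustrates this, having one admissible digit per step but positive entropy --- and the reduced system for $\mu_4$ is (essentially) the subshift of $\{0,1,2\}^{\mathbb N}$ obtained by forbidding suffix sums from entering $C_{012}$, whose entropy exceeds $\log 2$ once $C_{012}$ is small. Since you do not rely on this aside (your actual argument uses $\eta'(D_2)=0\neq 1=\mathbb P(D_2)$, exactly as the paper does via $D''$), the proof stands.
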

\begin{proof}
Let
\begin{align*}
H_3=\{\vec{z}=\sum_{i=1}^\infty\frac{\vec{q}_{b_i}}{\beta^i} \in S_\beta: &\sum_{i=1}^\infty\frac{\vec{q}_{b_{j+i-1}}}{\beta^i} \text{ belongs to } C_{012} \text{ for infinitely many } j\text{'s,}\\
&\text{ and never belongs to }C \}.
\end{align*}
Then $\Omega\times\Upsilon\times H_3 \subseteq K_\beta^{-1}(\Omega\times\Upsilon\times H_3)$ and
$\cup_{i=0}^\infty K_\beta^{-i}(\Omega\times\Upsilon\times H_3)=Z_3$.
It follows that
$\mu_3(Z_3)=\lim_{i\rightarrow \infty}\mu_3 (K_\beta^{-i}(\Omega\times\Upsilon\times H_3))=1.$
Since  $\mu_3$ is $K_\beta$-invariant, then
$$\mu_3(\Omega\times\Upsilon\times H_3)=\mu_3 (K_\beta^{-1}(\Omega\times\Upsilon\times H_3))=\mu_3 (K_\beta^{-2}(\Omega\times\Upsilon\times H_3))=\cdots=1.$$
Thus it is enough to study the entropy with respect to $\mu_3$ of the map $K_\beta$ restricted to $\Omega\times\Upsilon\times H_3$.
Let $\pi_1,\pi_2,\pi_3$ be the canonical projection onto the three coordinates respectively.
Notice that the action of the transformation $K_\beta$ on the first coordinate is an identity,
which implies that $K_\beta$ is essentially a product transformation $I_\Omega\times K_\beta' $,
where $K_\beta'=(\pi_2\circ K_\beta) \times (\pi_3\circ K_\beta)$ on $\Upsilon\times H_3$ and  $I_\Omega$ is the identity on $\Omega$.
Since $(\upsilon,\vec z)\in\Upsilon\times S_\beta$ and $\omega\in \Omega$ are independent,
it follows from $h_\mu(I_\Omega)=0$ for any measure $\mu$ on $(\Omega, \mathcal{A})$ and that
$$h_{\mu_3}(K_\beta)=h_{\mu_3'}(K_\beta'),$$
where $\mu_3'(B\times H)=\mu_3(\Omega\times B \times H)$ for $B\in \mathcal{B},H\in (H_3\cap\mathcal{S})$.\\

Let
\begin{align*}
D_3=\{(b_1,b_2,\ldots)\in \Upsilon:&\sum_{i=1}^\infty\frac{\vec{q}_{b_{j+i-1}}}{\beta^i} \text{ belongs to } C_{012} \text{ for infinitely many } j\text{'s,}\\
&\text{ and never belongs to }C \}.
\end{align*}
Define a map $\phi$ from $(\Upsilon\times H_3, \mathcal{B}\times (H_3\cap\mathcal{S}), \mu_3',K_\beta')$ to $(D_3,D_3\cap\mathcal{B},\mu_3'\circ \phi^{-1},\sigma')$ as
$$\phi(\upsilon,\vec z)=\rho_2(\rho_1(0^\infty,\upsilon,\vec z)).$$
Since $\vec z \in H_3$, then $\phi$ is well defined and bijective.  $\phi$ is measurable and the inverse is also measurable by Theorem \ref{polish}. Finally, $\phi$ preserves the measure and $\phi\circ K_\beta'=\sigma'\circ \phi$. Then $\phi$ is an isomorphism and it follows that
$$h_{\mu_3}(K_\beta)=h_{\mu_3'}(K_\beta')=h_{\mu_3'\circ \phi^{-1}}(\sigma')\leq h_{\mathbb{P}}(\sigma')=\log 3.$$

Since $\mathbb{P}$ is the unique measure of maximal entropy on $D_3$, to show $h_{\mu_3}(K_\beta)<\log 3$, it is enough to prove that $\mu_3'\circ\phi^{-1}\neq\mathbb{P}$. This is done by contradiction.
If $\mu_3'\circ\phi^{-1}=\mathbb{P}$, then
$$\mathbb{P}(D_3)=\mu_3'(\Upsilon\times H_3)=\mu_3(\Omega\times\Upsilon\times H_3)=1.$$
Since $D_3\subset (D'')^c$, where $D''$ is defined as in (\ref{PD''}), then $\mathbb{P}((D'')^c)=1$, which is a contradiction to $\mathbb{P}(D'')=1$. Therefore, $$h_{\mu_3}(K_\beta)<\log 3.$$
Let
\begin{align*}
H_4=\{\vec{z}=\sum_{i=1}^\infty\frac{\vec{q}_{b_i}}{\beta^i} \in S_\beta: &\sum_{i=1}^\infty\frac{\vec{q}_{b_{j+i-1}}}{\beta^i} \text{ belongs to } C \text{ for infinitely many } j\text{'s,}\\
&\text{ and never belongs to }C_{012} \},\\
H_5=\{\vec{z}=\sum_{i=1}^\infty\frac{\vec{q}_{b_i}}{\beta^i} \in S_\beta: &\sum_{i=1}^\infty\frac{\vec{q}_{b_{j+i-1}}}{\beta^i} \text{ never belongs to } C\\
&\text{ and never belongs to }C_{012} \},\\
=\{\vec{z}\in S_\beta: \vec{z} \text{ has a u}& \text{nique } \beta\text{-expansion}\}.
\end{align*}
Then it follows that
$$\mu_4(\Omega\times\Upsilon\times H_4)=\mu_5(\Omega\times\Upsilon\times H_5)=1.$$
We can also obtain that $h_{\mu_4}(K_\beta),h_{\mu_5}(K_\beta)< \log 3$ using the similar method.
\end{proof}

From the above lemma we can obtain the upper bound of the entropy of $K_\beta$-invariant measure for which $Z^c$ has positive measure.
\begin{lemma}\label{max entropy}
Let $\beta\in(1, 3/2]$. Let $\mu$ be a  $K_\beta$-invariant measure for which $\mu(Z^c)>0$. Then $h_\mu(K_\beta) < \log 3.$
\end{lemma}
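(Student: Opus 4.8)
The plan is to combine the partition of $Z^c$ into the three $K_\beta$-invariant pieces $Z_3,Z_4,Z_5$ (already constructed in the excerpt) with the affinity of the entropy map over the simplex of invariant measures. First I would record that each of these pieces is invariant: since the orbit-recurrence conditions defining $Z_1$ and $Z_2$ satisfy $K_\beta^{-1}(Z_1)=Z_1$ and $K_\beta^{-1}(Z_2)=Z_2$, their Boolean combinations $Z=Z_1\cap Z_2$, $Z_3=Z_1^c\setminus Z_2^c$, $Z_4=Z_2^c\setminus Z_1^c$ and $Z_5=Z_1^c\cap Z_2^c$ are all $K_\beta$-invariant Borel sets, and together they form a partition of $\Omega\times\Upsilon\times S_\beta$. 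Writing $a_0=\mu(Z)$ and $a_i=\mu(Z_i)$ for $i=3,4,5$, and (for each piece $W$ of positive measure) $\mu_W=\mu(\cdot\cap W)/\mu(W)$, the invariance of $W$ together with the invariance of $\mu$ makes each $\mu_W$ a $K_\beta$-invariant probability measure, and
$$\mu=a_0\,\mu_Z+a_3\,\mu_{Z_3}+a_4\,\mu_{Z_4}+a_5\,\mu_{Z_5},$$
where any term with zero weight is simply omitted.

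Next I would invoke the affinity of the Kolmogorov--Sinai entropy on the convex set of $K_\beta$-invariant probability measures, which upgrades the above splitting to
$$h_\mu(K_\beta)=a_0\,h_{\mu_Z}(K_\beta)+a_3\,h_{\mu_{Z_3}}(K_\beta)+a_4\,h_{\mu_{Z_4}}(K_\beta)+a_5\,h_{\mu_{Z_5}}(K_\beta),$$
and it then remains only to bound each summand. For the $Z$-term I would use Lemma \ref{bi'}: on $Z$ the map $\varphi'$ is a bimeasurable bijection intertwining $K_\beta$ with $\sigma'$, so $(Z,\mu_Z,K_\beta)$ is isomorphic to $(D,\varphi'_*\mu_Z,\sigma')$, giving $h_{\mu_Z}(K_\beta)=h_{\varphi'_*\mu_Z}(\sigma')\le\log 3$, since $\log 3$ is the maximal (topological) entropy of the full shift on $\{0,1,2\}^{\mathbb N}$. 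For the three remaining terms I would apply Lemma \ref{entropy3}: whenever $a_i>0$, the measure $\mu_{Z_i}$ satisfies $\mu_{Z_i}(Z_i)=1$, so that lemma yields $h_{\mu_{Z_i}}(K_\beta)<\log 3$ for $i=3,4,5$.

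Finally I would assemble the estimate. Because $\mu(Z^c)=a_3+a_4+a_5>0$, at least one of the weights $a_3,a_4,a_5$ is strictly positive; for that index the corresponding term obeys $a_i\,h_{\mu_{Z_i}}(K_\beta)<a_i\log 3$, while every remaining term is bounded by its weight times $\log 3$. Summing these bounds gives
$$h_\mu(K_\beta)<(a_0+a_3+a_4+a_5)\log 3=\log 3.$$
I expect the only genuinely delicate point to be the \emph{equality} form of the affinity of entropy: one must use that $\mu\mapsto h_\mu(K_\beta)$ is affine, not merely concave, on the space of invariant measures, so that the strict inequality on a single positive-weight piece actually forces strict inequality for $h_\mu(K_\beta)$. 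Everything else---invariance of the $Z_i$, invariance of the conditional measures $\mu_W$, and the factorization through $(\Upsilon,\sigma')$ for the $Z$-term---is routine and already prepared by the earlier lemmas. I would also remark that, since the argument leans on Lemma \ref{entropy3}, it is valid precisely on the range $\beta\in(1,\beta_*)$ where that lemma applies.
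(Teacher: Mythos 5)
Your proof is correct and follows essentially the same route as the paper's: decompose $\mu$ over the invariant pieces $Z,Z_3,Z_4,Z_5$, use affinity of the entropy map to split $h_\mu(K_\beta)$ accordingly, bound the $Z$-term by $\log 3$ via the isomorphism with $(\Upsilon,\sigma')$, and bound the remaining terms strictly below $\log 3$ via Lemma \ref{entropy3}. Your closing remark is apt: since Lemma \ref{entropy3} is stated only for $\beta\in(1,\beta_*)$, the hypothesis $\beta\in(1,3/2]$ in the statement is wider than what this argument (in the paper as well as in your version) actually covers.
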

\begin{proof}
Notice that $Z,Z_3,Z_4 $ and $ Z_5$ are pairwise disjoint and the union is $\Omega \times \Upsilon\times S_\beta$.
Since $Z, Z_3,Z_4$ and $Z_5$ are $K_\beta$-invariant, then there exist $K_\beta$-invariant probability measures $\mu_{12},\mu_3,\mu_4$ and $\mu_5$ concentrated on $Z,Z_3,Z_4$ and $Z_5$, respectively, such that
$$\mu=(1-\alpha_3-\alpha_4-\alpha_5)\mu_{12}+\alpha_3\mu_3+\alpha_4\mu_4+\alpha_5\mu_5,$$
where $ 0\leq \alpha_3,\alpha_4,\alpha_5\leq 1$ and $0<\alpha_3+\alpha_4+\alpha_5\leq 1$.
Then
$$h_\mu(K_\beta)=(1-\alpha_3-\alpha_4-\alpha_5) h_{\mu_{12}}(K_\beta)+\alpha_3 h_{\mu_3}(K_\beta)+\alpha_4 h_{\mu_4}(K_\beta)+\alpha_5 h_{\mu_5}(K_\beta).$$
Since $h_{\mu_{12}}(K_\beta) \leq \log 3$ by Remark \ref{remark} and $h_{\mu_3}(K_\beta), h_{\mu_4}(K_\beta), h_{\mu_5}(K_\beta) <\log 3$ by Lemma \ref{entropy3}, then the result follows.
\end{proof}

Now we obtain the main result in this section.
\begin{theorem}
Let $\beta\in(1, \beta_*)$. The measure $\nu_\beta (A)=\mathbb{P}(\varphi(Z \cap A))$ is the unique $K_\beta$-invariant measure of maximal entropy.
\end{theorem}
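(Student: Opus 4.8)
The plan is to establish two facts: that $\nu_\beta$ attains entropy $\log 3$, and that $\log 3$ is the maximal value of $h_\mu(K_\beta)$ over all $K_\beta$-invariant measures, attained only by $\nu_\beta$. For the first, I would invoke Theorem \ref{4.6}, by which the systems $(\Omega\times\Upsilon\times S_\beta,\mathcal{A\times B\times S},\nu_\beta,K_\beta)$ and $(\Upsilon,\mathcal{B},\mathbb{P},\sigma')$ are isomorphic. Since $\mathbb{P}$ is the uniform product measure on the full shift $\{0,1,2\}^\mathbb{N}$ we have $h_{\mathbb{P}}(\sigma')=\log 3$, and because entropy is an isomorphism invariant this gives $h_{\nu_\beta}(K_\beta)=\log 3$. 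Note also that $\nu_\beta$ is concentrated on $Z$, since $\nu_\beta(Z)=\mathbb{P}(\varphi(Z\cap Z))=\mathbb{P}(D)=1$ by Lemma \ref{m}.

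Next I would show $\log 3$ is an upper bound by splitting on the value of $\mu(Z^c)$. If $\mu(Z^c)>0$, then Lemma \ref{max entropy} immediately yields $h_\mu(K_\beta)<\log 3$. If instead $\mu(Z^c)=0$, then $\mu$ is concentrated on $Z$, and the bimeasurable bijection $\varphi':Z\rightarrow D$ of Lemma \ref{bi'} conjugates $K_\beta|_Z$ to $\sigma'|_D$ and transports $\mu$ to the $\sigma'$-invariant measure $\mu\circ(\varphi')^{-1}$ on $D\subseteq\Upsilon$; isomorphism invariance of entropy then gives $h_\mu(K_\beta)=h_{\mu\circ(\varphi')^{-1}}(\sigma')\le\log 3$. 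In either case $h_\mu(K_\beta)\le\log 3$, so combined with the previous paragraph $\nu_\beta$ is a measure of maximal entropy.

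For uniqueness, suppose $\mu$ is any $K_\beta$-invariant measure with $h_\mu(K_\beta)=\log 3$. The contrapositive of Lemma \ref{max entropy} forces $\mu(Z^c)=0$, so $\mu$ lives on $Z$ and $\mu\circ(\varphi')^{-1}$ is a $\sigma'$-invariant measure on $\Upsilon$ of entropy $\log 3$. As recorded in Remark \ref{remark}, $\mathbb{P}$ is the unique measure of maximal entropy of the full shift $(\Upsilon,\sigma')$, whence $\mu\circ(\varphi')^{-1}=\mathbb{P}$. An identical computation using the definition $\nu_\beta(A)=\mathbb{P}(\varphi(Z\cap A))$ shows $\nu_\beta\circ(\varphi')^{-1}=\mathbb{P}$ as well: for measurable $B\subseteq D$ one has $\nu_\beta((\varphi')^{-1}(B))=\mathbb{P}(\varphi((\varphi')^{-1}(B)))=\mathbb{P}(B)$, since $(\varphi')^{-1}(B)\subseteq Z$ and $\varphi'$ is onto $D$. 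Because $\varphi'$ is a bimeasurable bijection, a measure on $Z$ is determined by its push-forward under $\varphi'$; as $\mu$ and $\nu_\beta$ are both concentrated on $Z$ with the common push-forward $\mathbb{P}$, it follows that $\mu=\nu_\beta$.

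The bulk of the work has already been discharged in the preceding lemmas: the strict entropy drop on $Z^c$ (Lemmas \ref{entropy3} and \ref{max entropy}) and the full-measure statement $\mathbb{P}(D)=1$ (Lemma \ref{m}, which is exactly where the restriction $\beta<\beta_*$ enters). What remains is essentially bookkeeping, and the only point requiring care is the transfer step, namely verifying that $\nu_\beta$ and any competing maximal-entropy measure both push forward to $\mathbb{P}$ under $\varphi'$, so that the bimeasurability of $\varphi'$ pins down the measure on $Z$ uniquely.
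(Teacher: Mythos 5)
Your proposal is correct and follows essentially the same route the paper intends: the paper states this theorem without a written proof, assembling it implicitly from Theorem \ref{4.6} (which gives $h_{\nu_\beta}(K_\beta)=\log 3$ by isomorphism invariance), Remark \ref{remark} and Lemma \ref{bi'} (handling measures concentrated on $Z$ via the conjugacy $\varphi'$ and uniqueness of $\mathbb{P}$ as the measure of maximal entropy on the full shift), and Lemma \ref{max entropy} (the strict entropy drop when $\mu(Z^c)>0$). Your write-up supplies exactly the missing bookkeeping, including the push-forward argument that pins down uniqueness, and is sound.
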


\section{An absolutely continuous invariant measure for $K_\beta$ when $1<\beta\leq \frac{3}{2}$}

The analysis is similar in the two cases: $\beta\in(1,3/2)$ and $\beta=3/2$. Without special statement we assume $\beta\in(1,3/2)$ in this section.
A brief description of the case of $\beta=3/2$ is given in the remarks.
Recall the product measure $m_1$ on $\Omega$ with weights $\{p,1-p\}$ and $m_2$ on $\Upsilon$ with weights $\{s,t,1-s-t\}$.
Consider the measure space $(S_\beta,\mathcal{S},\lambda_2)$, where $\lambda_2$ is the normalized Lebesgue measure.
In this section we will prove that $K_\beta$ has an invariant measure of the form $m_1\otimes m_2 \otimes \mu_\beta$, where $\mu_\beta$ is absolutely continuous with respect to $\lambda_2$.
We will show the result by several steps.\\

\medskip
\noindent
\textbf{Step 1: a position dependent random transformation $R$.}

\smallskip
Bahsoun and G\'ora \cite{BG}, gave a sufficient condition for the existence of an absolutely continuous invariant measure for a random map with position dependent probabilities on a bounded domain of $\mathbb{R}^N$. We take some of their results a little further.

For $k=1,\ldots,K$, let $\tau_k:S_\beta\rightarrow S_\beta$ be piecewise one-to-one and $C^2$, non-singular transformations on a common partition $\mathcal{P}$ of $S_\beta : \mathcal{P} = \{S_1, \ldots, S_q \}$ and $\tau_{k,i}= \tau_k|_{S_i}, i = 1, \ldots, q.$
Let $p_k: S_\beta\rightarrow [0,1]$ be piecewise $C^1$ functions such that $ \sum_{k=1}^Kp_k=1.$
Denote by $R=\{\tau_1,\ldots,\tau_K;p_1(\vec{z}),\ldots,p_K(\vec{z})\}$ the position dependent random map, i.e., $R(\vec z)=\tau_k(\vec z)$ with probability $p_k(\vec z)$.
Define the transition function for $R$ as follows:
$$\mathbf{P}(\vec{z},A)=\sum_{k=1}^Kp_k(\vec{z})\mathbbm{1}_A(\tau_k(\vec{z})),$$
where $A$ is any measurable set and $\mathbbm{1}_A$ denotes the indicator function of the set $A$.

The iteration of $R$ is denoted by
$R^n:=\{\tau_{k_1k_2\cdots k_n};p_{k_1k_2\cdots k_n}\},k_1k_2\cdots k_n\in\{1,2,\ldots,K\}^n$, where
$$\tau_{k_1k_2\cdots k_n}(\vec{z})=\tau_{k_n}\circ\tau_{k_{n-1}}\circ \cdots\circ\tau_{k_1}(\vec{z})$$
and
$$p_{k_1k_2\cdots k_n}(\vec{z})=p_{k_n}(\tau_{k_{n-1}}\circ\cdots\circ\tau_{k_1}(\vec{z}))\cdot p_{k_{n-1}}(\tau_{k_{n-2}}\circ\cdots\circ\tau_{k_1}(\vec{z}))\cdots p_{k_1}(\vec{z}).$$
The transition function $\mathbf{P}$ induces an operator $\mathbf{P}_*$ on the set of probability measure on  $ (S_\beta, \mathcal{S})$ defined by
$$ \mathbf{P}_*\mu(A)=\int\mathbf{P}(\vec{z},A)d\mu(\vec{z})=\sum_{k=1}^K\int_{\tau_k^{-1}(A)}p_k(\vec{z})d\mu(\vec{z})=\sum_{k=1}^K\sum_{i=1}^q\int_{\tau_{k,i}^{-1}(A)}p_k(\vec{z})d\mu(\vec{z}).$$
We say that the measure $\mu$ is $R$-invariant iff $\mathbf{P}_*\mu=\mu$.

If $\mu$ has density $f$ with respect to $\lambda_2$, then $\mathbf{P}_*\mu$ has also a density which we denote by $P_Rf$, i.e.,
$$ \int_AP_Rf(\vec z)d\lambda_2(\vec z)=\sum_{k=1}^K\sum_{i=1}^q \int_{\tau_{k,i}^{-1}(A)}p_k(\vec{z})f(\vec{z})d\lambda_2(\vec z).$$
We call $P_R$ the Perron-Frobenius operator of the random map $R$ and it has very useful properties\cite{BG}:
\begin{itemize}[leftmargin=*]
\item[] (i) $P_R$ is linear;
\item[] (ii) $P_R$ is non-negative;
\item[] (iii) $P_Rf=f \iff \mu=f\cdot \lambda_2$ is $R$-invariant;
\item[] (iv) $\Vert P_Rf\Vert_1\leq\Vert f\Vert_1$, where $\Vert \cdot\Vert_1$ denotes the $L^1$ norm;
\item[] (v) $P_{R\circ T}=P_R\circ P_T.$ In particular, $P_R^N=P_{R^N}.$
\end{itemize}

Let each $S_i$ be a bounded closed domain having a piecewise $C^2$ boundary of finite $1$-dimensional measure.
Assume that the faces of $\partial S_i$ meet at angles bounded uniformly away from $0$ and the probabilities $p_k(\vec{z})$ are piecewise $C^1$ functions on the partition $\mathcal{P}$. We assume:

\medskip
\noindent
 \textbf{Condition(A):}
\begin{equation*}
\max_{1\leq i\leq q}\sum_{k=1}^Kp_k(\vec{z})\Vert D\tau_{k,i}^{-1}(\tau_{k,i}(\vec{z}))\Vert <c<1,
\end{equation*}
where $D\tau_{k,i}^{-1}(\vec{z})$ is the derivative matrix of $\tau_{k,i}^{-1}$ at $\vec{z}$.

\medskip
Using the multidimensional notion of variation \cite{G}:
$$V(f)=\int_{\mathbb{R}^N}\Vert Df\Vert d\lambda_N=\sup\left\{\int_{\mathbb{R}^N}f\text{div}(g)d\lambda_N:g=(g_1,\ldots,g_N)\in C_0^1(\mathbb{R}^N,\mathbb{R}^N)\right\},$$
where $f\in L_1(\mathbb{R}^N)$ has bounded support, $Df$ denotes the gradient of $f$ in the distributional sense, $\text{div}(g)=\nabla\cdot g=\frac{\partial g_1}{\partial x_1}+\frac{\partial g_2}{\partial x_2}+\cdots+\frac{\partial g_N}{\partial x_N}$ is the divergence operator, and $C_0^1(\mathbb{R}^N,\mathbb{R}^N)$ is the space of continuously differentiable functions from $\mathbb{R}^N$ into $\mathbb{R}^N$ having compact support. Consider the Banach space \cite[Remark 1.12]{G},
$$BV(S_\beta)=\{f\in L_1(S_\beta):V(f)<+\infty\},$$
with the norm $\Vert f\Vert_{BV}=\Vert f\Vert_{L_1}+V(f)$.

Let $a(S_i)$ be defined as in (\ref{a}) and $\gamma(S_i)$ defined as in (\ref{b}). Now we start at points $y\in F$, where the minimal angle $\gamma(S_i)$ is attained, defining $L_y$ to be central rays of the largest regular cones contained in $S_i$. Then we extend this field of segments to $C^1$ field, making $L_y$ short enough to avoid overlapping. Let $\delta(y)$ be the length of $L_y,y\in\partial S_i$. By the compactness of $\partial S_i$ we have
$$\delta(S_i)=\inf_{y\in\partial(S_i)}\delta(y)>0.$$

Let $\vec{z}$ be a point in $\partial S_i$ and $J_{k,i}$ the Jacobian of $\tau_{k|S_i}$ at $\vec{z}$.

We recall the following theorem.
\begin{theorem}\cite[Theorem 6.3]{BG}\label{VF}
If $R$ is a random map which satisfies Condition (A), then
$$V(P_Rf)\leq c(1+1/a)V(f)+(M+\frac{c}{a\delta})\Vert f\Vert_1,$$
where $a=\min\{a(S_i):i=1,\ldots,q\}>0,\delta=\min\{\delta(S_i):i=1\ldots,q\}>0,M_{k,i}=\sup_{\vec{z}\in S_i}(Dp_k(\vec{z})-\frac{DJ_{k,i}}{J_{k,i}}p_k(\vec{z}))$ and $M=\sum_{k=1}^K\max_{1\leq i\leq q}M_{k,i}.$
\end{theorem}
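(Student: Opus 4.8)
The plan is to prove this Lasota--Yorke inequality by working with the dual (variational) description of the variation $V$. Writing out $P_R$ explicitly through the change of variables $\vec{w}=\tau_{k,i}(\vec{z})$ gives $P_Rf=\sum_{k,i}\big(p_k f/|J_{k,i}|\big)\circ\tau_{k,i}^{-1}\cdot\mathbbm{1}_{\tau_{k,i}(S_i)}$, and by definition $V(P_Rf)=\sup\{\int P_Rf\,\mathrm{div}(g)\,d\lambda_2\}$ over $g\in C_0^1(\mathbb{R}^2,\mathbb{R}^2)$ with $\Vert g\Vert_\infty\le1$. Using the defining duality of the transfer operator, this integral equals $\sum_{k=1}^K\sum_{i=1}^q\int_{S_i}p_k f\,(\mathrm{div}(g)\circ\tau_{k,i})\,d\lambda_2$, so the task reduces to estimating, branch by branch and piece by piece, the pullback of $\mathrm{div}(g)$ under each $\tau_{k,i}$.

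First I would handle the interior contribution. The algebraic heart is the Piola-type transformation law for the divergence under the diffeomorphism $\tau_{k,i}$: putting $G_{k,i}:=J_{k,i}(D\tau_{k,i})^{-1}(g\circ\tau_{k,i})$, one has $(\mathrm{div}(g))\circ\tau_{k,i}=J_{k,i}^{-1}\,\mathrm{div}(G_{k,i})$ on $\mathrm{int}(S_i)$. Substituting and integrating by parts splits each term into an interior integral $-\int_{S_i}D(p_kf/J_{k,i})\cdot G_{k,i}\,d\lambda_2$ and a boundary integral $\int_{\partial S_i}(p_kf/J_{k,i})\,G_{k,i}\cdot\vec{n}\,d\mathcal{H}^1$. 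In the interior integral the product rule separates $D(p_kf/J_{k,i})$ into a factor carrying $Df$ and a factor carrying $D(p_k/J_{k,i})$. In the first, the matrix $J_{k,i}(D\tau_{k,i})^{-1}$ inside $G_{k,i}$ cancels the weight $1/J_{k,i}$, leaving the pointwise coefficient $\sum_k p_k\Vert D\tau_{k,i}^{-1}\Vert$, which is $<c$ by Condition (A); summing over $i$ yields a contribution $\le cV(f)$. The second factor is precisely where $M_{k,i}=\sup_{S_i}(Dp_k-(DJ_{k,i}/J_{k,i})p_k)$ appears, and summing gives an $L^1$ contribution $\le M\Vert f\Vert_1$.

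The main obstacle is the boundary integral, and this is where the geometric quantities $a$, $\gamma(S_i)$ and $\delta(S_i)$ are indispensable. The surface integral over $\partial S_i$ must be converted into a volume estimate controlled by the variation of $f$ on $S_i$ and by $\Vert f\Vert_1$. The device is the cone/trace inequality: at each singular point of $\partial S_i$ one inscribes the largest regular cone of opening $\gamma(S_i)$, extends the resulting $C^1$ field $L_y$ of central segments of length at least $\delta(S_i)$ into the interior (short enough to avoid overlap), and integrates $f$ along these segments. The fundamental theorem of calculus then bounds $\int_{\partial S_i}|f|\,d\mathcal{H}^1$ by $(1/a)$ times the variation of $f$ on $S_i$ plus a remainder of order $(1/(a\delta))\Vert f\Vert_1$, the constant $a=|\cos(\pi/2+\gamma(S_i))|$ quantifying the transversality of the segments to the boundary. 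Carrying along the factor $c$ inherited from the bound on $G_{k,i}$ and Condition (A), the boundary contribution is at most $(c/a)V(f)+(c/(a\delta))\Vert f\Vert_1$. I expect making the field $L_y$ globally $C^1$ and non-overlapping, and proving the trace inequality with the correct constants, to be the delicate part of the argument.

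Finally, summing the interior and boundary bounds over all $k=1,\dots,K$ and $i=1,\dots,q$ and taking the supremum over admissible $g$, the variation contributions combine as $cV(f)+(c/a)V(f)=c(1+1/a)V(f)$ while the $L^1$ contributions combine as $(M+c/(a\delta))\Vert f\Vert_1$, giving exactly $V(P_Rf)\le c(1+1/a)V(f)+(M+\frac{c}{a\delta})\Vert f\Vert_1$. The hypotheses that the $\tau_k$ are non-singular and $C^2$, that the probabilities are piecewise $C^1$, and that the boundary angles are bounded away from $0$ guarantee $a>0$, $\delta>0$ and $M<\infty$, so all constants are finite and independent of $f$.
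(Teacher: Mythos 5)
The paper offers no proof of this statement: it is imported verbatim as \cite[Theorem 6.3]{BG}, so there is no internal argument to compare yours against. Your sketch is, however, a faithful reconstruction of the proof actually given in \cite{BG} (which in turn runs on the machinery of \cite{GB}): the dual definition of $V$ over $C_0^1$ fields with $\Vert g\Vert_\infty\le 1$, the change of variables pulling $\mathrm{div}(g)$ back to each $S_i$, the Piola identity $(\mathrm{div}\,g)\circ\tau_{k,i}=J_{k,i}^{-1}\mathrm{div}(G_{k,i})$, integration by parts into an interior term controlled by Condition (A) (giving $cV(f)+M\Vert f\Vert_1$) and a boundary term controlled by the cone geometry (giving $(c/a)V(f)+(c/(a\delta))\Vert f\Vert_1$). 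The one ingredient you assert rather than prove --- the trace inequality bounding $\int_{\partial S_i}|f|\,d\mathcal{H}^1$ by $a^{-1}\bigl(V_{S_i}(f)+\delta^{-1}\Vert f\Vert_{L^1(S_i)}\bigr)$ via the $C^1$ field of segments $L_y$ --- is exactly the content of a separate lemma in \cite{GB}, so a self-contained proof would still have to supply it, but as a reconstruction of the cited argument your outline is correct and complete in structure.
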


Now, let $R$ be a random map which is given by $\{\tau_1,\ldots,\tau_6;p_1(\vec{z}),\ldots,p_6(\vec{z})\}$ where
\begin{align*}
\tau_1(\vec{z})&=
\begin{cases}
\beta \vec{z}-\vec{q}_i, &\mbox{if }\vec{z}\in  E_i,i\in\{0,1,2\}\\
\beta \vec{z}-\vec{q}_i, &\mbox{if }\vec{z}\in  C_{ij},ij\in\{01,02,12\}\\
\beta \vec{z}, &\mbox{if }\vec{z}\in C_{012},
\end{cases}\\
\tau_2(\vec{z})&=
\begin{cases}
\beta \vec{z}-\vec{q}_i, &\mbox{if }\vec{z}\in  E_i,i\in\{0,1,2\}\\
\beta \vec{z}-\vec{q}_i, &\mbox{if }\vec{z}\in  C_{ij},ij\in\{01,02,12\}\\
\beta \vec{z}-\vec{q}_1, &\mbox{if }\vec{z}\in C_{012},
\end{cases}\\
\tau_3(\vec{z})&=
\begin{cases}
\beta \vec{z}-\vec{q}_i, &\mbox{if }\vec{z}\in  E_i,i\in\{0,1,2\}\\
\beta \vec{z}-\vec{q}_i, &\mbox{if }\vec{z}\in  C_{ij},ij\in\{01,02,12\}\\
\beta \vec{z}-\vec{q}_2, &\mbox{if }\vec{z}\in C_{012},
\end{cases}\\
\tau_4(\vec{z})&=
\begin{cases}
\beta \vec{z}-\vec{q}_i, &\mbox{if }\vec{z}\in  E_i,i\in\{0,1,2\}\\
\beta \vec{z}-\vec{q}_j, &\mbox{if }\vec{z}\in  C_{ij},ij\in\{01,02,12\}\\
\beta \vec{z}, &\mbox{if }\vec{z}\in C_{012},
\end{cases}\\
\tau_5(\vec{z})&=
\begin{cases}
\beta \vec{z}-\vec{q}_i, &\mbox{if }\vec{z}\in  E_i,i\in\{0,1,2\}\\
\beta \vec{z}-\vec{q}_j, &\mbox{if }\vec{z}\in  C_{ij},ij\in\{01,02,12\}\\
\beta \vec{z}-\vec{q}_1, &\mbox{if }\vec{z}\in C_{012},
\end{cases}\\
\tau_6(\vec{z})&=
\begin{cases}
\beta \vec{z}-\vec{q}_i, &\mbox{if }\vec{z}\in  E_i,i\in\{0,1,2\}\\
\beta \vec{z}-\vec{q}_j, &\mbox{if }\vec{z}\in  C_{ij},ij\in\{01,02,12\}\\
\beta \vec{z}-\vec{q}_2, &\mbox{if }\vec{z}\in C_{012},
\end{cases}
\end{align*}
The probabilities are defined as follows.
\begin{alignat}{2}
&p_1(\vec{z})= p\cdot s,
& &p_4(\vec{z})=(1-p)\cdot s, \notag\\
&p_2(\vec{z})=p\cdot t,
& & p_5(\vec{z})=(1-p)\cdot t, \label{pk}\\
&p_3(\vec{z})= p\cdot(1-s-t),
 &\quad&p_6(\vec{z})=(1-p)\cdot (1-s-t).\notag
\end{alignat}

We have the following lemma.
\begin{lemma}  \label{prob}
For any $\vec{z}\in S_\beta$,
$\sum_{k_1k_2\cdots k_n\in\{1,\ldots,6\}^n} p_{k_1k_2\ldots k_n}(\vec{z})=1,n\in\mathbb{Z}$.
\end{lemma}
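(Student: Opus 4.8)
The plan is to prove the identity by induction on $n$, reducing the multi-step weight sum to a single-step sum via the recursive structure of the weights $p_{k_1\cdots k_n}$. The key algebraic fact is that the one-step probabilities sum to $1$ at \emph{every} point of $S_\beta$, and the whole statement follows from propagating this through the definition of the iterated map.

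First I would establish the base case $n=1$, where the claim reads $\sum_{k=1}^{6}p_k(\vec z)=1$. Since the six probabilities in (\ref{pk}) are constants, I group them as
\[
(p_1+p_2+p_3)+(p_4+p_5+p_6)=p\bigl(s+t+(1-s-t)\bigr)+(1-p)\bigl(s+t+(1-s-t)\bigr)=p+(1-p)=1.
\]
The same grouping gives $\sum_{k=1}^{6}p_k(\vec w)=1$ for \emph{every} $\vec w\in S_\beta$, an observation I will reuse at shifted points in the inductive step.

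For the inductive step, suppose the identity holds for some $n$ and all $\vec z\in S_\beta$. The definition of the iterated weights factors as $p_{k_1\cdots k_{n+1}}(\vec z)=p_{k_{n+1}}(\tau_{k_1\cdots k_n}(\vec z))\cdot p_{k_1\cdots k_n}(\vec z)$, where $\tau_{k_1\cdots k_n}(\vec z)=\tau_{k_n}\circ\cdots\circ\tau_{k_1}(\vec z)$. Summing over all words of length $n+1$ and separating off the last index yields
\[
\sum_{k_1\cdots k_{n+1}}p_{k_1\cdots k_{n+1}}(\vec z)=\sum_{k_1\cdots k_n}p_{k_1\cdots k_n}(\vec z)\left(\sum_{k_{n+1}=1}^{6}p_{k_{n+1}}(\tau_{k_1\cdots k_n}(\vec z))\right).
\]
Because each $\tau_k$ maps $S_\beta$ into $S_\beta$, the point $\tau_{k_1\cdots k_n}(\vec z)$ lies in $S_\beta$, so the inner sum equals $1$ by the base-case observation. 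The outer sum then collapses to $\sum_{k_1\cdots k_n}p_{k_1\cdots k_n}(\vec z)$, which equals $1$ by the induction hypothesis, completing the step.

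I expect no genuine difficulty here; the statement is a routine consequence of stochasticity being preserved under iteration of a random map. The only points requiring care are purely bookkeeping: correctly factoring $p_{k_1\cdots k_{n+1}}$ into its leading factor times $p_{k_1\cdots k_n}$, justifying the interchange of the finite sum with this factorization, and noting that the iterates $\tau_{k_1\cdots k_n}(\vec z)$ never leave $S_\beta$, which is precisely what permits applying the base case at the shifted argument.
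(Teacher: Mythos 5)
Your proof is correct and takes essentially the same route as the paper's: induction on $n$, using the multiplicative structure of $p_{k_1\cdots k_n}$ together with the one-step identity $\sum_{k=1}^{6}p_k(\vec w)=1$. The only immaterial difference is that you peel off the last index $k_{n+1}$ and apply the one-step identity at the iterated point $\tau_{k_1\cdots k_n}(\vec z)$, whereas the paper peels off the first index $k_1$ and applies the induction hypothesis at the shifted points $\tau_{k_1}(\vec z)$.
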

\begin{proof}
We prove this lemma by induction.
For $n=1$, $p_1(\vec{z})+\dots+p_6(\vec{z})=1$.
Assume it is true for $n=m$, i.e. for any $\vec{z}\in S_\beta$,
$$\sum_{k_1k_2\cdots k_m\in\{1,\ldots,6\}^n} p_{k_1k_2\ldots k_m}(\vec{z})=1.$$
 For  $n=m+1$,
\begin{align*}
&\sum_{k_1k_2\cdots k_{m+1}\in\{1,\ldots,6\}^{m+1}} p_{k_1k_2\ldots k_{m+1}}(\vec{z})\\
=&\sum_{k_1k_2\cdots k_{m+1}\in\{1,\ldots,6\}^{m+1}}p_{k_{m+1}}(\tau_{k_m}\circ\cdots\circ\tau_{k_1}(\vec{z}))\cdot p_{k_m}(\tau_{k_{m-1}}\circ\cdots\circ\tau_{k_1}(\vec{z}))\cdots p_{k_1}(\vec{z})\\
=&\sum_{1k_2\cdots k_{m+1}\in\{1,\ldots,6\}^{m+1}} p_{k_{m+1}}(\tau_{k_m}\circ \cdots \circ \tau_{k_2}(\tau_1(\vec{z})))\cdots p_{k_2}(\tau_1(\vec{z}))\cdot p_1(\vec{z})\\
&+\sum_{2k_2\cdots k_{m+1}\in\{1,\ldots,6\}^{m+1}} p_{k_{m+1}}(\tau_{k_m}\circ \cdots \circ \tau_{k_2}(\tau_2(\vec{z})))\cdots p_{k_2}(\tau_2(\vec{z}))\cdot p_2(\vec{z})\\
&+\cdots +\sum_{6k_2\cdots k_{m+1}\in\{1,\ldots,6\}^{m+1}} p_{k_{m+1}}(\tau_{k_m}\circ \cdots \circ \tau_{k_2}(\tau_6(\vec{z})))\cdots p_{k_2}(\tau_6(\vec{z}))\cdot p_6(\vec{z})\\
=&p_1(\vec{z})\sum_{k_2\cdots k_{m+1}\in\{1,\ldots,6\}^{m}} p_{k_2\ldots k_{m+1}}(\tau_1(\vec{z})) + p_2(\vec{z})\sum_{k_2\cdots k_{m+1}\in\{1,\ldots,6\}^{m}} p_{k_2\ldots k_{m+1}}(\tau_2(\vec{z})) \\
&+\cdots+p_6(\vec{z})\sum_{k_2\cdots k_{m+1}\in\{1,\ldots,6\}^{m}} p_{k_2\ldots k_{m+1}}(\tau_6(\vec{z}))  \\
=&p_1(\vec{z})+\dots+p_6(\vec{z})\\
=&1.
\end{align*}
\end{proof}
Now we can prove the existence of an acim for $R$.
\begin{theorem}\label{Racim}
Let $R= \{\tau_1,\ldots,\tau_6;p_1(\vec{z}),\ldots,p_6(\vec{z})\}$, then $R$ admits an acim.
\end{theorem}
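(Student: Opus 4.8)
The plan is to produce an $R$-invariant density from the Lasota--Yorke inequality of Theorem \ref{VF}, applied not to $R$ itself but to a high enough iterate $R^n$, and then to average the resulting $R^n$-invariant density over one $R$-cycle. The reason an iterate is needed is that each branch $\tau_{k,i}$ is the similarity $\vec z\mapsto\beta\vec z-\vec q$ of ratio $\beta$, so $D\tau_{k,i}^{-1}=\beta^{-1}I$ and $\Vert D\tau_{k,i}^{-1}\Vert=\sqrt2/\beta$; since $\sqrt2/\beta\geq 1$ once $\beta\leq\sqrt2$, Condition (A) can fail for $R$ when $\beta$ is close to $1$, and the contraction factor $c(1+1/a)$ is never below $1$ for $R$ on this range.

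First I would verify Condition (A) for $R^n$. By property (v) we have $P_R^n=P_{R^n}$, and $R^n=\{\tau_{k_1\cdots k_n};p_{k_1\cdots k_n}\}$ is again a random map on the refined common partition $\mathcal P_n:=\vee_{i=0}^{n-1}R^{-i}\mathcal P$. Every branch $\tau_{k_1\cdots k_n}$ is a similarity of ratio $\beta^n$, so $\Vert D\tau_{k_1\cdots k_n,i}^{-1}\Vert=\sqrt2/\beta^n$ on each piece, and since the probabilities $p_k$ in (\ref{pk}) are constants, Lemma \ref{prob} gives $\sum_{k_1\cdots k_n}p_{k_1\cdots k_n}(\vec z)=1$. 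Hence $\max_i\sum p_{k_1\cdots k_n}(\vec z)\Vert D\tau_{k_1\cdots k_n,i}^{-1}\Vert=\sqrt2/\beta^n=:c_n$. Exactly as in the proof of Theorem \ref{T} the triangular/quadrilateral geometry gives $a=\sqrt2/2$, and one obtains some $\delta>0$; moreover the probabilities are constant and the Jacobians $J_{k_1\cdots k_n,i}=\beta^{2n}$ are constant, so $Dp_k=0$ and $DJ/J=0$, whence $M=0$. Choosing $n$ with $\beta^n>2+\sqrt2$ makes $c_n(1+1/a)=\sqrt2(1+\sqrt2)/\beta^n<1$.

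With this $n$ fixed, Theorem \ref{VF} gives $V(P_{R^n}f)\leq\theta V(f)+B\Vert f\Vert_1$ with $\theta:=c_n(1+1/a)<1$ and $B:=c_n/(a\delta)<\infty$. Iterating and using $\Vert P_{R^n}g\Vert_1\leq\Vert g\Vert_1$ (property (iv)) yields $V(P_{R^n}^m f)\leq\theta^m V(f)+B\Vert f\Vert_1/(1-\theta)$, so the Ces\`aro averages $g_N=\frac1N\sum_{m=0}^{N-1}P_{R^n}^m\mathbbm 1_{S_\beta}$ form a sequence bounded in $BV(S_\beta)$ with $\Vert g_N\Vert_1=1$. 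By the compactness of bounded subsets of $BV(S_\beta)$ in $L^1$, some subsequence converges in $L^1$ to a nonnegative $f^*$ with $\Vert f^*\Vert_1=1$; since $P_{R^n}g_N-g_N=\frac1N(P_{R^n}^N\mathbbm 1_{S_\beta}-\mathbbm 1_{S_\beta})\to 0$ in $L^1$ and $P_{R^n}$ is continuous on $L^1$, we get $P_{R^n}f^*=f^*$, so $f^*\lambda_2$ is an $R^n$-invariant acim.

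Finally I would promote $f^*$ to an $R$-invariant density by setting $f=\frac1n\sum_{j=0}^{n-1}P_R^j f^*$. Each $P_R^j f^*$ is a probability density (non-negativity and property (iv)), so $f\geq 0$ and $\Vert f\Vert_1=1$, while $P_R f-f=\frac1n(P_R^n f^*-f^*)=\frac1n(P_{R^n}f^*-f^*)=0$; hence $f\lambda_2$ is the desired acim for $R$. The main obstacle is the geometric bookkeeping behind the middle step: one must check that the pieces of $\mathcal P_n$ have piecewise $C^2$ boundaries of finite length and interior angles uniformly bounded away from $0$. Here the similarity structure does the real work, since every region boundary in (\ref{partition}) is horizontal, vertical, or of slope $-1$, and the affine branches preserve these three directions; thus every piece of every $\mathcal P_n$ has interior angles in $\{\pi/4,\pi/2,3\pi/4\}$, giving $\gamma\geq\pi/4$, $a=\sqrt2/2$ and $\delta>0$ uniformly in $n$.
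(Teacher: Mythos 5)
Your proposal is correct and follows essentially the same route as the paper: both verify Condition (A) for a sufficiently high iterate $R^n$ (with the same constants $\Vert D\tau_{k_1\cdots k_n,i}^{-1}\Vert=\sqrt2/\beta^n$ and $a=\sqrt2/2$, using Lemma \ref{prob} to sum the probabilities), apply the Lasota--Yorke inequality of Theorem \ref{VF} to that iterate, and extract an invariant density from the resulting uniform $BV$ bound by compactness. The only (harmless) divergence is at the very end --- the paper bounds $\Vert P_{R^n}f\Vert_{BV}$ for \emph{all} $n$ by writing $n=jl+i$ and then invokes the Kakutani--Yosida theorem for $P_R$ directly, whereas you first obtain a fixed density of $P_{R^n}$ and then average it over one $R$-cycle; note only that your claim that $\delta$ is bounded below uniformly in $n$ is neither true nor needed, since a single fixed iterate suffices.
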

\begin{proof}
Denote the  partition (\ref{partition}) by $\mathcal{P}$ with
$$S_1=E_0,S_2=E_1,S_3=E_2, S_4=C_{01},S_5=C_{12},S_6=C_{02},S_7=C_{012}.$$
Consider the iteration of the random map, $R^n$,
the corresponding partition is $\vee_{i=0}^{n-1} R^{-i}\mathcal{P}$, where
$$ R^{-i}\mathcal{P}=\vee_{k_1k_2\cdots k_i\in\{1,\ldots,6\}^i} \tau_{k_1k_2\ldots k_i}^{-1}\mathcal{P}.$$
 For a set $P_i\in \vee_{i=0}^{n-1} R^{-i}\mathcal{P}$ and a sequence $k_1\ldots k_n\in\{1,\ldots,6\}^n$, let $\tau_{k_1\ldots k_n,i}=\tau_{k_1\ldots k_n}|_{P_i}$ and $M_{k_1\ldots k_n,i}=\sup_{\vec{z}\in P_i}(Dp_{k_1\ldots k_n}(\vec{z})-\frac{DJ_{k_1\ldots k_n,i}}{J_{k_1\ldots k_n,i}}p_{k_1\ldots k_n,i}(\vec{z}))$, where $J_{k_1\ldots k_n,i}$ is the Jacobian of $\tau_{k_1\ldots k_n,i}$.
Let
$$M_n=\sum_{k_1\ldots k_n\in\{0,1,2\}^n}\max_{P_i\in\vee_{i=0}^{n-1} R^{-i}\mathcal{P}}M_{k_1\ldots k_n,i} \ \ \text{   and   }\ \ \delta_n=\min_{P_i\in\vee_{i=0}^{n-1} R^{-i}\mathcal{P}}\delta(P_i).$$
For any set $P_i\in \vee_{i=0}^{n-1} R^{-i}\mathcal{P}$, the
derivative matrix of $\tau_{k_1k_2\ldots k_n}^{-1}$ is equal to
$$\begin{bmatrix}
              \frac{1}{\beta^n} & 0 \\
              0 & \frac{1}{\beta^n} \\
    \end{bmatrix}. $$
Using Lemma \ref{prob} we have
\begin{align*}
\max_{P_i\in\vee_{i=0}^{n-1} R^{-i}\mathcal{P}}\sum_{k_1k_2\cdots k_n\in\{1,\ldots,6\}^n} p_{k_1k_2\ldots k_n}(\vec{z})\Vert D(\tau_{k_1k_2\ldots k_n}|_{P_i})^{-1}\Vert=\frac{\sqrt{2}}{\beta^n}<\frac{2\sqrt{2}}{\beta^n}:=c_n.
\end{align*}
For the partition $\vee_{i=0}^{n-1} R^{-i}\mathcal{P}$, we have $a_n=\sqrt{2}/2$.
Let
$$r_n=c_n(1+\frac{1}{a_n})=\frac{2\sqrt{2}+4}{\beta^n},\ \ \  R_n=M_n+\frac{c_n}{a_n\delta_n} .$$
We can find $l>\log(2\sqrt{2}+4)/\log\beta$ such that $r_l<1$.
Fix this $l$ and let $C_1=\max\{r_1,r_2,\ldots,r_{l-1}\}, C_2=\max\{R_1,R_2,\ldots,R_{l-1}\}$.
For any integer $n$, we have $n=jl+i$, where $0\leq i\leq l-1.$
Notice that $P_{R^n}=(P_{R^l})^jP_{R^i}$. Apply Theorem \ref{VF} on $R^l$, then we get
\begin{align*}
V(P_{R^n}f)&=VP_{R^l}^j(P_{R^i}f)\\
&\leq r_l\cdot VP_{R^l}^{j-1}(P_{R^i}f)+R_l\Vert f\Vert_1\\
&\leq r_l\cdot( r_l\cdot VP_{R^l}^{j-2}(P_{R^i}f)+R_l\Vert f\Vert_1)+R_l\Vert f\Vert_1\\
&\cdots\\
&\leq r_l^j V(P_{R^i}f)+(r_l^{j-1}+r_l^{j-2}+\cdots+r_l+1)R_l\Vert f\Vert_1\\
&\leq r_l^j (C_1V(f)+C_2\Vert f\Vert_1)+(r_l^{j-1}+r_l^{j-2}+\cdots+r_l+1)R_l\Vert f\Vert_1\\
&= C_1r_l^j V(f)+(C_2r_l^j+r_l^{j-1}+r_l^{j-2}+\cdots+r_l+1)R_l\Vert f\Vert_1\\
&\leq C_1r_l^j V(f)+(C_2+\frac{1}{1-r_l})R_l\Vert f\Vert_1.
\end{align*}
By definition of the norm $\Vert\cdot\Vert_{BV}$,
\begin{align*}
\Vert P_{R^n}f \Vert_{BV}&=\Vert P_{R^n}f \Vert_1+V(P_{R^n}f)\\
&\leq \Vert f\Vert_1+C_1r_l^j V(f)+(C_2+\frac{1}{1-r_l})R_l\Vert f\Vert_1.
\end{align*}
 Then the result follows by the technique in \cite[Theorem 1]{GB}. We write some details for completeness.
From the above inequality it follows that the set $\{P_R^n\mathbf{1}\}_{n\geq l}$ is
uniformly bounded, where $\mathbf{1}$ is the constant function equal to $1$ on $S_\beta$. Hence $P_R$ has a nontrivial fixed point $\mathbf{1}^*$ which is the density of an acim by the Kakutani-Yoshida Theorem.
\end{proof}
~\\
\textbf{Step 2: for the skew product transformation $R'$ on $S_\beta\times [0,1)$.}

\medskip
Let $(I,\mathcal{B}(I),\lambda_1)$ be the unit interval $I=[0,1)$, with $\mathcal{B}(I)$ the Borel $\sigma$-algebra on $I$ and $\lambda_1$ being Lebesgue measure on $(I,\mathcal{B}(I))$.
Let $Y=S_\beta\times I$ and the set $J_k$ be given by $J_k = \{(\vec{z},w): \sum_{i<k}p_i(\vec{z})\leq w < \sum_{i\leq k}p_i(\vec{z})\}$. Define maps $\varphi_k: J_k\rightarrow I $ by
$$\varphi_k(\vec{z},w)=\frac{1}{p_k(\vec{z})}w-\frac{\sum_{r=1}^{k-1}p_r(\vec{z})}{p_k(\vec{z})}.$$
Define the skew product transformation $R':S_\beta\times I\rightarrow S_\beta\times I $ by
$$R'(\vec{z},w)=(\tau_k(x),\varphi_k(\vec{z},w))$$
for $(\vec{z},w)\in J_k$.

Since $p_k(\vec{z})$ is defined as in (\ref{pk}), then we have
\begin{alignat}{2}
&\varphi_1(\vec{z},w)=\frac{w}{ps},
& &\varphi_4(\vec{z},w)=\frac{w-p}{(1-p)s}, \notag\\
&\varphi_2(\vec{z},w)=\frac{w-ps}{pt},
& & \varphi_5(\vec{z},w)=\frac{w-p-(1-p)s}{(1-p)t}, \notag\\
&\varphi_3(\vec{z},w)=\frac{w-ps-pt}{p(1-s-t)},
 &\quad& \varphi_6(\vec{z},w)=\frac{w-p-(1-p)s-(1-p)t}{(1-p)(1-s-t)}.\notag
\end{alignat}
We denote $p_k(\vec{z})$ and $\varphi_k(\vec{z},w)$ by $p_k$ and $\varphi_k(w)$, respectively, since each $p_k(\vec{z})$ is a constant.
Therefore,
\begin{equation*}
R'(\vec{z},w)=
\begin{cases}
(\tau_1(\vec{z}),\varphi_1(w)), &\mbox{if } w\in[0,ps),\\
(\tau_2(\vec{z}),\varphi_2(w)), &\mbox{if } w\in[ps,ps+pt),\\
(\tau_3(\vec{z}),\varphi_3(w)), &\mbox{if } w\in[ps+pt,p),\\
(\tau_4(\vec{z}),\varphi_4(w)), &\mbox{if } w\in[p,p+(1-p)s),\\
(\tau_5(\vec{z}),\varphi_5(w)), &\mbox{if } w\in[p+(1-p)s,p+(1-p)s+(1-p)t),\\
(\tau_6(\vec{z}),\varphi_6(w)), &\mbox{if } w\in[p+(1-p)s+(1-p)t,1).\\
\end{cases}
\end{equation*}

Denote by $\mu_\beta$ an acim for the position dependent random transformation $R=\{\tau_1,\ldots,\tau_6;p_1,\ldots,p_6\}$, which means $\mu_\beta$ is $R$-invariant and absolutely continuous with respect to Lebesgue measure $\lambda_2$ in $\mathbb{R}^2$.
We start by recalling Lemma 3.2 in \cite{BBQ}.

\begin{lemma}\label{RtoR'}
$\mu_\beta$ is invariant for the random map $R$ if and only if $\mu_\beta\otimes\lambda_1$ is invariant for the skew product $R'$.
\end{lemma}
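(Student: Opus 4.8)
The plan is to prove the equivalence through a single fiberwise change of variables, after recasting both invariance statements as identities against test functions. Recall that $\mu_\beta\otimes\lambda_1$ is $R'$-invariant exactly when $\int_Y g\circ R'\,d(\mu_\beta\otimes\lambda_1)=\int_Y g\,d(\mu_\beta\otimes\lambda_1)$ for every bounded measurable $g$ on $Y=S_\beta\times I$, and that $\mu_\beta$ is $R$-invariant exactly when $\mathbf{P}_*\mu_\beta=\mu_\beta$. Unwinding the definition of $\mathbf{P}_*$ via the relation $\int_{S_\beta} h\,d(\mathbf{P}_*\mu_\beta)=\int_{S_\beta}\sum_{k=1}^6 p_k(\vec z)\,h(\tau_k(\vec z))\,d\mu_\beta(\vec z)$, the $R$-invariance of $\mu_\beta$ is equivalent to
$$\int_{S_\beta}\sum_{k=1}^6 p_k(\vec z)\,h(\tau_k(\vec z))\,d\mu_\beta(\vec z)=\int_{S_\beta} h\,d\mu_\beta\qquad\text{for all bounded measurable }h.$$
So the entire lemma reduces to matching these two families of identities.

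First I would expand the left-hand side of the $R'$-invariance identity by splitting $Y$ along the partition $\{J_k\}_{k=1}^6$ and integrating the fiber variable $w$ first. On $J_k$ the map acts as $(\vec z,w)\mapsto(\tau_k(\vec z),\varphi_k(\vec z,w))$, and for fixed $\vec z$ the affine substitution $u=\varphi_k(\vec z,w)$ carries the fiber $[\sum_{i<k}p_i(\vec z),\sum_{i\le k}p_i(\vec z))$ onto $[0,1)$ with $dw=p_k(\vec z)\,du$. This is the heart of the argument: the Jacobian of $\varphi_k^{-1}$ is precisely $p_k(\vec z)$, so uniform Lebesgue measure on the fibers is exactly what is converted into the selection probabilities. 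After this substitution one obtains
$$\int_Y g\circ R'\,d(\mu_\beta\otimes\lambda_1)=\int_0^1\sum_{k=1}^6\int_{S_\beta} g(\tau_k(\vec z),u)\,p_k(\vec z)\,d\mu_\beta(\vec z)\,du.$$

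Next I would restrict to product test functions $g(\vec z,u)=h(\vec z)\phi(u)$, equivalently to indicator rectangles $\mathbbm{1}_A\otimes\mathbbm{1}_B$, which form a $\pi$-system generating $\mathcal{S}\otimes\mathcal{B}(I)$, so that equality on them propagates to all bounded measurable $g$ by the monotone class theorem. For such $g$ the $u$-integral factors out as $\int_0^1\phi\,d\lambda_1$, and the displayed formula becomes $\bigl(\int_0^1\phi\,d\lambda_1\bigr)\int_{S_\beta}\sum_k p_k(\vec z)h(\tau_k(\vec z))\,d\mu_\beta$, while the right-hand side $\int_Y h\phi\,d(\mu_\beta\otimes\lambda_1)$ equals $\bigl(\int_0^1\phi\,d\lambda_1\bigr)\int_{S_\beta} h\,d\mu_\beta$. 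Cancelling the common factor (nonzero for a suitable choice of $\phi$) leaves exactly the reduced $R$-invariance identity from the first paragraph. Since each step is an equivalence once we quantify over all $h$ and all $\phi$, both directions of the lemma follow simultaneously.

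The computation is routine; the only point needing care is the justification that testing invariance on product/rectangle functions suffices, which I would dispatch with a standard $\pi$-$\lambda$ or monotone class argument. I would also note that in our setting each $p_k(\vec z)$ is constant, so the $J_k$ are genuine products $S_\beta\times[\,\cdot\,)$ and the fiberwise substitution has constant Jacobian, making the change of variables completely transparent; the argument nonetheless goes through verbatim for position-dependent probabilities, recovering Lemma 3.2 of \cite{BBQ}.
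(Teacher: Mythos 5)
Your proof is correct. Note that the paper does not actually prove this lemma: it is stated verbatim as a recollection of Lemma 3.2 of \cite{BBQ}, so there is no in-paper argument to compare against. Your self-contained derivation is the standard one (and essentially the one in \cite{BBQ}): partitioning $Y$ along the sets $J_k$, applying Fubini, and performing the fiberwise affine substitution $u=\varphi_k(\vec z,w)$ with $dw=p_k(\vec z)\,du$ correctly converts the uniform fiber measure into the selection probabilities, yielding
$$\int_Y g\circ R'\,d(\mu_\beta\otimes\lambda_1)=\int_0^1\sum_{k}\int_{S_\beta} g(\tau_k(\vec z),u)\,p_k(\vec z)\,d\mu_\beta(\vec z)\,du,$$
and the reduction to product test functions plus a $\pi$-system/Dynkin argument gives both implications at once. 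Your closing remark is also apt: in this paper's setting the $p_k$ are constants, so the $J_k$ are honest product sets and the Jacobian is constant, but nothing in your argument uses that, so it recovers the position-dependent statement in full generality. The only cosmetic quibble is that one should observe that the case $p_k(\vec z)=0$ (where $\varphi_k$ is undefined) contributes an empty fiber and hence nothing to the integral; in the present application all $p_k$ are strictly positive constants, so this is moot.
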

~\\
\textbf{Step 3: for the skew product transformation $R_\beta$ on $\Omega\times\Upsilon\times S_\beta$ .}

\medskip
Define the $skew\ product\ transformation\ R_\beta$ on $\Omega\times\Upsilon\times S_\beta$ as follows:
\begin{equation*}
R_\beta(\omega,\upsilon,\vec{z})=
\begin{cases}
(\sigma\omega,\sigma'\upsilon, \beta \vec{z}-\vec{q}_i), &\mbox{if }\vec{z}\in  E_i,i\in\{0,1,2\}\\
(\sigma\omega,\sigma'\upsilon, \beta \vec{z}-\vec{q}_i), &\mbox{if }\vec{z}\in  C_{ij},ij\in\{01,02,12\} \mbox{ and }\omega_1=0\\
(\sigma\omega,\sigma'\upsilon, \beta \vec{z}-\vec{q}_j), &\mbox{if }\vec{z}\in  C_{ij},ij\in\{01,02,12\} \mbox{ and }\omega_1=1\\
(\sigma\omega,\sigma'\upsilon, \beta \vec{z}-\vec{q}_i), &\mbox{if }\vec{z}\in C_{012},\upsilon_1=i,i\in\{0,1,2\}.
\end{cases}
\end{equation*}
\begin{lemma}\label{R'toRbeta}
 $(S_\beta \times I, \mathcal{S\times B}(I),\mu_\beta\otimes \lambda_1,R')$ and $(\Omega\times\Upsilon\times S_\beta,\mathcal{A\times B\times S},m_1\otimes m_2\otimes \mu_\beta,R_\beta)$ are isomorphic.
\end{lemma}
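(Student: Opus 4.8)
The plan is to view both quadruples as skew products over the common base map on $S_\beta$ and to build the conjugacy from a generalized Lüroth (GLS) coding of the fibre coordinate $w\in I$. The organizing observation is that the six branches $\tau_1,\dots,\tau_6$ together with the weights (\ref{pk}) encode a pair $(\omega_1,\upsilon_1)\in\{0,1\}\times\{0,1,2\}$. Let $b\colon\{1,\dots,6\}\to\{0,1\}\times\{0,1,2\}$ be the bijection $b(1)=(0,0)$, $b(2)=(0,1)$, $b(3)=(0,2)$, $b(4)=(1,0)$, $b(5)=(1,1)$, $b(6)=(1,2)$. Comparing the definitions of $\tau_k$ and of $R_\beta$ branch by branch on $E_i$, $C_{ij}$ and $C_{012}$, one sees that for every $\vec z$ the point $\tau_k(\vec z)$ coincides with the third coordinate of $R_\beta(\omega,\upsilon,\vec z)$ whenever $(\omega_1,\upsilon_1)=b(k)$. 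Moreover $p_k=p^{(1)}_{\omega_1}\,p^{(2)}_{\upsilon_1}$, where $p^{(1)}=(p,1-p)$ and $p^{(2)}=(s,t,1-s-t)$ are the weights of $m_1$ and $m_2$; this factorization is exactly what lets the fibre measure $\lambda_1$ transport to $m_1\otimes m_2$.

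First I would analyse the fibre dynamics. Since the $p_k$ in (\ref{pk}) are constants, the sets $J_k$ depend only on $w$, so $w$ evolves under the $\vec z$-independent map $G(w)=\varphi_{k(w)}(w)$, where $k(w)$ is the index of the subinterval of $I$ containing $w$; each branch $\varphi_k$ is affine of slope $1/p_k$ onto $I$. The itinerary map $c(w)=(k_1(w),k_2(w),\dots)$ with $k_{n}(w)=k(G^{n-1}(w))$ is Borel and injective, and it intertwines $G$ with the shift. A cylinder computation gives $\lambda_1\big(c^{-1}[k_1,\dots,k_n]\big)=\prod_{j=1}^{n}p_{k_j}$, so $c_*\lambda_1$ is the Bernoulli measure with weights $(p_1,\dots,p_6)$ and (assuming $0<p<1$ and $s,t,1-s-t>0$) $c$ is a bijection onto a co-null subset of $\{1,\dots,6\}^{\mathbb N}$, with inverse the GLS series $w=\sum_{n\ge1}L_{k_n}\prod_{j<n}p_{k_j}$, $L_k=\sum_{r<k}p_r$. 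Applying $b$ coordinatewise gives $\eta\colon\{1,\dots,6\}^{\mathbb N}\to\Omega\times\Upsilon$, which intertwines the shift with $\sigma\times\sigma'$ and, by the factorization $p_k=p^{(1)}_{\omega_1}p^{(2)}_{\upsilon_1}$, pushes the Bernoulli measure to $m_1\otimes m_2$. I then set
\begin{equation*}
\Psi(\vec z,w)=\big(\eta(c(w)),\,\vec z\big)=\big(\omega(w),\upsilon(w),\vec z\big).
\end{equation*}
Being the identity on $\vec z$ and a measure isomorphism on the independent fibre, $\Psi$ is a bimeasurable bijection off a null set (measurability of the inverse by Theorem \ref{polish}) carrying $\mu_\beta\otimes\lambda_1$ to $m_1\otimes m_2\otimes\mu_\beta$.

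It remains to verify $\Psi\circ R'=R_\beta\circ\Psi$. With $k=k(w)$ one has $R'(\vec z,w)=(\tau_k(\vec z),G(w))$, and since $c$ conjugates $G$ to the shift, $\omega(G(w))=\sigma\omega(w)$ and $\upsilon(G(w))=\sigma'\upsilon(w)$; hence $\Psi(R'(\vec z,w))=(\sigma\omega(w),\sigma'\upsilon(w),\tau_k(\vec z))$. On the other side $R_\beta(\Psi(\vec z,w))=R_\beta(\omega(w),\upsilon(w),\vec z)=(\sigma\omega(w),\sigma'\upsilon(w),\ast)$, where $\ast$ is the $R_\beta$-image of $\vec z$ for the pair $(\omega_1(w),\upsilon_1(w))=b(k)$, which equals $\tau_k(\vec z)$ by the first paragraph. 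The two sides agree, so $\Psi$ is an isomorphism. I expect the only delicate point to be the null-set bookkeeping: one must restrict to the forward-invariant full-measure set of $(\vec z,w)$ whose $w$-orbit never meets a partition endpoint (equivalently, whose coding avoids the countably many exceptional sequences), so that $G$, $c$ and $\Psi$ are everywhere defined and the intertwining holds pointwise; the algebraic content is forced by the construction of the $\tau_k$ and the weights $p_k$.
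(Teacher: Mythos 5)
Your proposal is correct and follows essentially the same route as the paper: the paper likewise codes the fibre coordinate $w$ by its generalized L\"uroth itinerary (its $\phi_1$, your $c$), splits each of the six symbols into a pair in $\{0,1\}\times\{0,1,2\}$ via the factorization $p_k=p^{(1)}_{\omega_1}p^{(2)}_{\upsilon_1}$ (its $\phi_2$, your $\eta\circ$), and takes the identity on $\vec z$. Your explicit branch-by-branch matching of $\tau_k$ with the $R_\beta$-branch for $(\omega_1,\upsilon_1)=b(k)$ and your attention to the null set of exceptional itineraries are both sound and, if anything, slightly more careful than the paper's appeal to the GLS literature.
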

\begin{proof}
Let $\pi_2: S_\beta\times I\rightarrow I$ be the canonical projection onto the second coordinate.
Consider the map $\varphi=\pi_2\circ R'$ on $(I,\mathcal{B}(I),\lambda_1)$. One can see that
$\varphi(w)=\varphi_k(w)$ for $w\in I_k$,
where $I_1=[0,p_1)$ and $I_k=\Big[\sum_{i=1}^{k-1}p_i,\sum_{i=1}^{k}p_i\Big)$ for $2\leq k \leq 6$.
Define
$$l(w)=\frac{1}{p_k}\quad \text{and} \quad h(w)=\frac{\sum_{i=1}^{k-1}p_i}{p_k} $$
for $w\in I_k$. It follows that $\varphi(w)=l(w)\cdot w-h(w)$.
Let
$$l_n=l_n(w):=l(\varphi^{n-1}(w))\quad \text{and} \quad h_n=h_n(w):=h(\varphi^{n-1}(w)).$$
For $w\in[0,1)$, we can write the generalized\ L{\"u}roth\ series\ (GLS) of $w$, which is
$$w=\frac{h_1}{l_1}+\frac{h_2}{l_1l_2}+\cdots+\frac{h_n}{l_1\cdots l_n}+\cdots.$$
Consider the system $\{\{0,1,2,3,4,5\}^{\mathbb{N}}, \mathcal{C},m,\sigma''\}$,
where $\mathcal{C}$ is the product $\sigma$-algebra,
$\sigma''$ is the left shift
and $m$ is the product measure with weights $\{p_1,\ldots,p_6\}$ as (\ref{pk}).
Let $\phi_1:I \rightarrow \{0,1,2,3,4,5\}^{\mathbb{N}} $ be given by
$$\phi_1:w=\sum_{n=1}^\infty\frac{h_i}{l_1l_2\cdots l_i}\mapsto (\gamma_1,\gamma_2,\ldots,),$$
where $\gamma_n=\gamma_n(w),n\geq 1$ is defines as follows:
$$\gamma_n:=\gamma_n(w)=k-1 \iff \varphi^{n-1}(w)\in I_k,$$
for $k\in\{1,2,3,4,5,6\}.$
It is known that $\varphi$ preserves the Lebesgue measure $\lambda_1$ and $\phi_1$ is an isomorphism between the two dynamical systems $(I,\mathcal{B}(I),\lambda_1,\varphi\}$ and $\{\{0,1,2,3,4,5\}^{\mathbb{N}}, \mathcal{C},m,\sigma''\}$. See \cite{BBD} for more details.


Next we give a map $\phi_2$ from $\{\{0,1,2,3,4,5\}^{\mathbb{N}}, \mathcal{C},m,\sigma''\}$ to $\{\Omega\times\Upsilon,\mathcal{A\times B},m_1\otimes m_2,\sigma\times\sigma'\}$.
Let $h_1:\{0,1,2,3,4,5\}\rightarrow\{0,1\}$ and $h_2:\{0,1,2,3,4,5\}\rightarrow\{0,1,2\}$ be given by
\begin{equation*}
h_1(x)=
\begin{cases}
0, &\mbox{if } x=0,1,2,\\
1, &\mbox{if } x=3,4,5,\\
\end{cases},
\quad
h_2(x)=
\begin{cases}
0, &\mbox{if } x=0,3,\\
1, &\mbox{if } x=1,4,\\
2, &\mbox{if } x=2,5.\\
\end{cases}
\end{equation*}
Define $\phi_2:\{0,1,2,3,4,5\}^{\mathbb{N}}\rightarrow\Omega\times\Upsilon$ by
$\phi_2(\gamma)=(\omega,\upsilon),$
where
\begin{align*}
&\omega=(h_1(\gamma_1),h_1(\gamma_2),h_1(\gamma_3),\ldots):=\tilde{h}_1(\gamma),\\
&\upsilon=(h_2(\gamma_1),h_2(\gamma_2),h_2(\gamma_3),\ldots):=\tilde{h}_2(\gamma).
\end{align*}
One can see that $\phi_2$ maps a cylinder of rank $n$ in $\{0,1,2,3,4,5\}^{\mathbb{N}}$ to the product of two cylinders of the same rank $n$ in $\Omega\times\Upsilon$. It follows that $\phi_2$ is a bimeasurable bijection. From the definition of the product measure, we can get the measure preservingness on cylinders. Finally, it is easy to see that $\phi_2\circ \sigma''=(\sigma\times\sigma')\circ \phi_2$.
Therefore, $\phi_2$ is an isomorphism.

Now let $\phi:S_\beta \times I\rightarrow \Omega\times\Upsilon\times S_\beta $ be given by
\begin{equation*}
\phi(\vec z,w)=(\tilde{h}_1(\phi_1(w)),\tilde{h}_2(\phi_1(w)),\vec z).
\end{equation*}
In fact, $\phi=\iota\circ( I_{S_\beta}\times(\phi_2\circ\phi_1))$,
where $I_{S_\beta}$ is the identity map on $S_\beta$
and $\iota(\vec z,\omega,\upsilon)=(\omega,\upsilon,\vec z)$ is a transformation that only changes the order of coordinates.
Since $\phi_2\circ \phi_1$ preserves the dynamics of $\pi_2\circ R$ and $\sigma\times\sigma'$, i.e.,
$$(\phi_2\circ \phi_1) \circ (\pi_2\circ R)=(\sigma\times\sigma')\circ(\phi_2\circ \phi_1),$$
we have that $\phi\circ R'= R_\beta\circ \phi$.
Therefore, the result follows.
\end{proof}
~\\
\textbf{Step 4: for the random transformation $K_\beta$ on $\Omega\times\Upsilon\times S_\beta$. }

\medskip
Define a skew product transformation $R_\beta$ as follows:
\begin{equation*}
R_\beta(\omega,\upsilon,\vec{z})=
\begin{cases}
(\sigma\omega,\sigma'\upsilon,\tau_1(\vec{z})), &\mbox{if } \omega_1=0,\upsilon_1=0,\\
(\sigma\omega,\sigma'\upsilon,\tau_2(\vec{z})), &\mbox{if } \omega_1=0,\upsilon_1=1,\\
(\sigma\omega,\sigma'\upsilon,\tau_3(\vec{z})), &\mbox{if } \omega_1=0,\upsilon_1=2,\\
(\sigma\omega,\sigma'\upsilon,\tau_4(\vec{z})), &\mbox{if } \omega_1=1,\upsilon_1=0,\\
(\sigma\omega,\sigma'\upsilon,\tau_5(\vec{z})), &\mbox{if } \omega_1=1,\upsilon_1=1,\\
(\sigma\omega,\sigma'\upsilon,\tau_6(\vec{z})), &\mbox{if } \omega_1=1,\upsilon_1=2,\\
\end{cases}
\end{equation*}
Let $\mu$ be an arbitrary probability measure on $S_\beta$.
We will show that any product measure of the form $m_1\otimes m_2\otimes \mu$ is $K_\beta$-invariant if
and only if it is $R_\beta$-invariant.

\begin{lemma}\label{RbetatoKbeta}
$m_1\otimes m_2 \otimes \mu \circ K_\beta^{-1}=m_1\otimes m_2 \otimes \mu \circ R_\beta^{-1}=m_1\otimes m_2\otimes \nu$, where
\begin{align*}
\nu=&p s\cdot \mu\circ \tau_1^{-1}+pt\cdot \mu\circ \tau_2^{-1}+p(1-s-t)\cdot \mu\circ \tau_3^{-1}\\
&+(1-p)\cdot s\cdot \mu\circ \tau_4^{-1}+(1-p)\cdot t\cdot \mu\circ \tau_5^{-1}+(1-p)\cdot (1-s-t)\cdot \mu\circ \tau_6^{-1}.
\end{align*}
\end{lemma}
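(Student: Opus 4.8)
The plan is to prove both identities by evaluating all three measures on measurable rectangles $A=B_\Omega\times B_\Upsilon\times B$ with $B_\Omega\in\mathcal{A}$, $B_\Upsilon\in\mathcal{B}$, $B\in\mathcal{S}$, and then extending to the full product $\sigma$-algebra $\mathcal{A\times B \times S}$. Since such rectangles form a $\pi$-system generating $\mathcal{A\times B \times S}$, and since all measures involved are probability measures (note that $\nu$ is a convex combination of the probability measures $\mu\circ\tau_k^{-1}$ with weights $p_k$ summing to $1$), the Dynkin $\pi$-$\lambda$ theorem upgrades equality on rectangles to equality of measures. Both $K_\beta$ and $R_\beta$ are measurable, so the two pushforwards are genuine probability measures.

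For the $R_\beta$ identity I would partition $\Omega\times\Upsilon$ according to $(\omega_1,\upsilon_1)\in\{0,1\}\times\{0,1,2\}$, the six cases being exactly the six branches $(\sigma\omega,\sigma'\upsilon,\tau_k(\vec{z}))$, $k=1,\dots,6$. On the branch with $(\omega_1,\upsilon_1)=(a,b)$ one has
\[
R_\beta^{-1}(A)\cap\big(\{\omega_1=a\}\times\{\upsilon_1=b\}\times S_\beta\big)=\{\omega_1=a,\ \sigma\omega\in B_\Omega\}\times\{\upsilon_1=b,\ \sigma'\upsilon\in B_\Upsilon\}\times\tau_k^{-1}(B).
\]
Because $m_1$ and $m_2$ are product measures, $m_1(\{\omega_1=a,\ \sigma\omega\in B_\Omega\})=m_1(\{\omega_1=a\})\,m_1(B_\Omega)$ and similarly for $m_2$, and $m_1(\{\omega_1=a\})\,m_2(\{\upsilon_1=b\})=p_k$ by the definition of the weights in (\ref{pk}). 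Summing the six disjoint contributions gives $m_1(B_\Omega)\,m_2(B_\Upsilon)\sum_{k=1}^6 p_k\,\mu(\tau_k^{-1}(B))$, which is exactly $(m_1\otimes m_2\otimes\nu)(A)$.

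The $K_\beta$ identity is the more delicate half, since $K_\beta$ shifts $\omega$ only on $C$, shifts $\upsilon$ only on $C_{012}$, and shifts neither coordinate on $E$, so the weights $p,1-p$ and $s,t,1-s-t$ enter in a region-dependent manner. I would split $S_\beta=E\cup C\cup C_{012}$ and compute $(m_1\otimes m_2\otimes\mu)(K_\beta^{-1}(A))$ region by region: on $E_i$ the first two coordinates are untouched and $\vec{z}\mapsto\beta\vec{z}-\vec{q}_i$, contributing $m_1(B_\Omega)m_2(B_\Upsilon)\,\mu(E_i\cap g_i^{-1}(B))$ where $g_i(\vec{z})=\beta\vec{z}-\vec{q}_i$; on $C_{ij}$ the branch is chosen by $\omega_1$, contributing $m_1(B_\Omega)m_2(B_\Upsilon)\big(p\,\mu(C_{ij}\cap g_i^{-1}(B))+(1-p)\,\mu(C_{ij}\cap g_j^{-1}(B))\big)$; on $C_{012}$ the branch is chosen by $\upsilon_1$, contributing the analogous expression with weights $s,t,1-s-t$ and maps $g_0,g_1,g_2$. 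To see that the sum of these contributions equals $(m_1\otimes m_2\otimes\nu)(A)$, I would expand $\nu(B)=\sum_k p_k\,\mu(\tau_k^{-1}(B))$ by decomposing each $\tau_k^{-1}(B)$ over the same three regions, using that on $E$ all $\tau_k$ coincide, on $C_{ij}$ the maps $\tau_1,\tau_2,\tau_3$ realize the smaller-digit branch $g_i$ while $\tau_4,\tau_5,\tau_6$ realize the larger-digit branch $g_j$, and on $C_{012}$ the pairs $\{\tau_1,\tau_4\},\{\tau_2,\tau_5\},\{\tau_3,\tau_6\}$ realize the digits $\vec{q}_0,\vec{q}_1,\vec{q}_2$. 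Collecting the coefficients $p_k$ then yields $\sum_k p_k=1$ in front of the $E$-terms, the coefficients $p$ and $1-p$ for the two $C$-branches, and $s$, $t$, $1-s-t$ for the three $C_{012}$-branches, matching the $K_\beta$ computation line for line.

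I expect the main obstacle to be precisely this last bookkeeping: one must check that the separate one-sided shifts of $K_\beta$, which consume a symbol of $\omega$ only in $C$ and a symbol of $\upsilon$ only in $C_{012}$, reproduce the very weights that $R_\beta$ obtains by shifting both coordinates at every step. The reconciliation rests on the product, hence shift-invariant, structure of $m_1$ and $m_2$: conditioning on the first symbol and then shifting factorizes as (weight of that symbol) times (measure of the shifted event), which is exactly what makes the region-dependent shifting of $K_\beta$ and the uniform shifting of $R_\beta$ agree at the level of pushforward measures.
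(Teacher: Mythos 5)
Your proposal is correct and follows essentially the same route as the paper: both verify the identities on a generating $\pi$-system of measurable rectangles, decompose the preimages over the regions $E$, $C$, $C_{012}$ (for $K_\beta$) and over the first symbols $(\omega_1,\upsilon_1)$ (for $R_\beta$), and then use the product structure of $m_1,m_2$ together with the coincidences of the $\tau_k$ on each region to collect the weights into $\nu=\sum_k p_k\,\mu\circ\tau_k^{-1}$. The bookkeeping you flag as the main obstacle is exactly the content of the paper's displayed list of identities among the sets $\tau_k^{-1}(S)\cap E$, $\tau_k^{-1}(S)\cap C$, $\tau_k^{-1}(S)\cap C_{012}$, and your resolution of it is the same as theirs.
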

\begin{proof}
Denote by $C_1$ and $C_2$ arbitrary cylinders in $\Omega$ and $\Upsilon$, respectively. Let $S$ be a closed set in $S_\beta$. It suffices to verify that the measures coincide on sets of the form $C_1\times C_2 \times S$, because the collection of these sets forms a generating $\pi$-system. Let $[i,C_1]=\{\omega_1=i\}\cap\sigma^{-1}(C_1)$ for $i=0,1$ and $[i,C_2]=\{\upsilon_1=i\}\cap(\sigma')^{-1}(C_2)$ for $i=0,1,2$. Notice that
\begin{align*}
&\tau_1(S)\cap E=\tau_2(S)\cap E=\tau_3(S)\cap E=\tau_4(S)\cap E=\tau_5(S)\cap E=\tau_6(S)\cap E,\\
&\tau_1(S)\cap C=\tau_2(S)\cap C=\tau_3(S)\cap C,\\
&\tau_4(S)\cap C=\tau_5(S)\cap C=\tau_6(S)\cap C,\\
&\tau_1(S)\cap C_{012}=\tau_4(S)\cap C_{012},\\
&\tau_2(S)\cap C_{012}=\tau_5(S)\cap C_{012},\\
&\tau_3(S)\cap C_{012}=\tau_6(S)\cap C_{012}.
\end{align*}
We can get
\begin{align*}
K_\beta^{-1}(C_1\times C_2 \times S)
=&C_1\times C_2 \times (\tau_1(S)\cap E)\\
 &\cup [0,C_1]\times C_2 \times (\tau_1(S)\cap C)\cup [1,C_1]\times C_2 \times (\tau_4(S)\cap C)\\
&\cup C_1\times [0,C_2] \times (\tau_1(S)\cap C_{012})\cup C_1\times [1,C_2] \times (\tau_2(S)\cap C_{012})\\
&\cup C_1\times [2,C_2] \times (\tau_3(S)\cap C_{012})
\end{align*}
Hence,
\begin{align*}
&m_1\otimes m_2 \otimes \mu \circ K_\beta^{-1}(C_1\times C_2 \times S)\\
=&m_1(C_1)\cdot m_2( C_2) \cdot \mu(\tau_1(S)\cap E)\\
&+p\cdot m_1(C_1)\cdot m_2( C_2) \cdot \mu(\tau_1(S)\cap C)+(1-p)\cdot m_1(C_1)\cdot m_2( C_2) \cdot \mu(\tau_4(S)\cap C)\\
&+s\cdot m_1(C_1)\cdot m_2( C_2) \cdot \mu(\tau_1(S)\cap  C_{012})+t\cdot m_1(C_1)\cdot m_2( C_2) \cdot \mu(\tau_2(S)\cap  C_{012})\\
&+(1-s-t)\cdot m_1(C_1)\cdot m_2( C_2) \cdot \mu(\tau_3(S)\cap  C_{012}\\
=&ps\cdot m_1(C_1)\cdot m_2( C_2) \cdot \mu(\tau_1(S))+pt\cdot m_1(C_1)\cdot m_2( C_2) \cdot \mu(\tau_2(S))\\
&+p(1-s-t)\cdot m_1(C_1)\cdot m_2(C_2) \cdot \mu(\tau_3(S))+(1-p)s\cdot m_1(C_1)\cdot m_2( C_2) \cdot\mu(\tau_4(S))\\
&+(1-p)t\cdot m_1(C_1)\cdot m_2(C_2) \cdot \mu(\tau_5(S))+(1-p)(1-s-t)\cdot m_1(C_1)\cdot m_2(C_2) \cdot\mu(\tau_6(S))\\
=&m_1\otimes m_2\otimes \nu(C_1\times C_2\times S).
\end{align*}
On the other hand,
\begin{align*}
&R_\beta^{-1}(C_1\times C_2 \times S)\\
=&[0,C_1]\times [0,C_2] \times \tau_1(S)\cup [0,C_1]\times [1,C_2] \times \tau_2(S)\\
&\cup [0,C_1]\times [2,C_2] \times \tau_3(S)\cup [1,C_1]\times [0,C_2] \times \tau_4(S)\\
&\cup [1,C_1]\times [1,C_2] \times \tau_5(S)\cup [1,C_1]\times [2,C_2] \times \tau_6(S).
\end{align*}
Therefore, we complete the proof.
\end{proof}

Now we give the main result in this section.
\begin{theorem}
Let $\beta \in (1,3/2)$. Then $K_\beta$ has an invariant measure of the form $m_1 \otimes m_2 \otimes \mu_\beta$,
where $\mu_\beta$ is absolutely continuous with respect to $\lambda_2$.
\end{theorem}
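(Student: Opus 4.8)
The plan is to assemble the chain of equivalences and isomorphisms built up in Steps 1--4 of this section, transporting an absolutely continuous invariant measure for the position dependent random map $R$ up to a $K_\beta$-invariant measure of the desired product form.

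First I would invoke Theorem \ref{Racim}, which supplies an $R$-invariant probability measure $\mu_\beta$ that is absolutely continuous with respect to $\lambda_2$; this is exactly the factor whose absolute continuity the statement demands, so it is the natural starting point. Then I would climb the ladder link by link. By Lemma \ref{RtoR'}, the $R$-invariance of $\mu_\beta$ is equivalent to the $R'$-invariance of $\mu_\beta \otimes \lambda_1$ on the skew product $S_\beta \times I$. By Lemma \ref{R'toRbeta}, the system $(S_\beta \times I, \mathcal{S} \times \mathcal{B}(I), \mu_\beta \otimes \lambda_1, R')$ is isomorphic to $(\Omega \times \Upsilon \times S_\beta, \mathcal{A} \times \mathcal{B} \times \mathcal{S}, m_1 \otimes m_2 \otimes \mu_\beta, R_\beta)$; since the conjugacy $\phi$ constructed there pushes $\mu_\beta \otimes \lambda_1$ onto $m_1 \otimes m_2 \otimes \mu_\beta$ and intertwines $R'$ with $R_\beta$, the measure $m_1 \otimes m_2 \otimes \mu_\beta$ is $R_\beta$-invariant. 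Finally, applying Lemma \ref{RbetatoKbeta} with the particular choice $\mu = \mu_\beta$, the identity $m_1 \otimes m_2 \otimes \mu_\beta \circ K_\beta^{-1} = m_1 \otimes m_2 \otimes \mu_\beta \circ R_\beta^{-1}$ shows that $K_\beta$-invariance and $R_\beta$-invariance of $m_1 \otimes m_2 \otimes \mu_\beta$ coincide. Hence $m_1 \otimes m_2 \otimes \mu_\beta$ is $K_\beta$-invariant, and since $\mu_\beta \ll \lambda_2$, this is precisely the measure claimed.

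Because every link in this chain has already been established, the only genuine task in the proof itself is to verify that $\mu_\beta$ passes through each link unaltered---that the intermediate measures really are the expected products and that invariance is inherited at each stage. I do not expect any new obstacle to arise here; the substantive difficulties of the section lie upstream, namely in proving the existence of the acim for $R$ via the multidimensional bounded-variation estimate of Theorem \ref{VF} (Theorem \ref{Racim}), and in building the measure-preserving isomorphism of Lemma \ref{R'toRbeta} from the generalized L\"uroth series coding. Granting those, the present statement is a direct corollary obtained simply by composing the four results in order.
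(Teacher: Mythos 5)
Your proposal follows exactly the paper's argument: the paper proves this theorem by citing Theorem \ref{Racim}, Lemma \ref{RtoR'}, Lemma \ref{R'toRbeta}, and Lemma \ref{RbetatoKbeta} in precisely the order and role you describe. Your write-up is correct and simply makes explicit the chain of invariance transfers that the paper leaves implicit.
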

\begin{proof}
By Theorem \ref{Racim}, Lemma \ref{RtoR'}, Lemma \ref{R'toRbeta}, and Lemma \ref{RbetatoKbeta}, we complete the proof.
\end{proof}

\begin{remark}
When $\beta=3/2$, $C_{012}=\{(\frac{2}{3},\frac{2}{3})\}$ is a point. We modify the definition of $R,R',R_\beta$, and give relevant conclusions.\\
(i) Let $R= \{\tau_1,\tau_2;p_1(\vec{z}),p_2(\vec{z})\}$ be a position dependent random transformation on $S_\beta$, where
\begin{align*}
\tau_1(\vec{z})&=
\begin{cases}
\beta \vec{z}-\vec{q}_i, &\mbox{if }\vec{z}\in  E_i,i\in\{0,1,2\},\\
\beta \vec{z}-\vec{q}_i, &\mbox{if }\vec{z}\in  C_{ij},ij\in\{01,02,12\},\\
\beta \vec{z}, &\mbox{if }\vec{z}=(\frac{2}{3},\frac{2}{3}),
\end{cases}\\
\tau_2(\vec{z})&=
\begin{cases}
\beta \vec{z}-\vec{q}_i, &\mbox{if }\vec{z}\in  E_i,i\in\{0,1,2\},\\
\beta \vec{z}-\vec{q}_j, &\mbox{if }\vec{z}\in  C_{ij},ij\in\{01,02,12\},\\
\beta \vec{z}, &\mbox{if }\vec{z}=(\frac{2}{3},\frac{2}{3}),
\end{cases}
\end{align*}
and $p_1(\vec{z})=p_2(\vec{z})=1/2$ for $\vec z \in S_\beta$. Similar to Theorem \ref{Racim}, it is not difficult to prove that $R$ has an acim $\mu_\beta$.\\
(ii) By Lemma 3.2 in \cite{BBQ}, $\mu_\beta\otimes\lambda_1$ is invariant for the skew product $R'$, where
\begin{equation*}
R'(\vec{z},w)=
\begin{cases}
(\tau_1(\vec{z}),\frac{w}{p}), &\mbox{if } w\in[0,p),\\
(\tau_2(\vec{z}),\frac{w-p}{1-p}), &\mbox{if } w\in[p,1).\\
\end{cases}
\end{equation*}
\\
(iii) Define the $skew\ product\ transformation\ R_\beta$ on $\Omega\times S_\beta$ as follows:
\begin{equation*}
R_\beta(\omega,\vec{z})=
\begin{cases}
(\sigma\omega, \beta \vec{z}-\vec{q}_i), &\mbox{if }\vec{z}\in  E_i,i\in\{0,1,2\},\\
(\sigma\omega, \beta \vec{z}-\vec{q}_i), &\mbox{if }\vec{z}\in  C_{ij},ij\in\{01,02,12\},\omega_1=0,\\
(\sigma\omega, \beta \vec{z}-\vec{q}_j), &\mbox{if }\vec{z}\in  C_{ij},ij\in\{01,02,12\},\omega_1=1,\\
(\sigma\omega, \beta \vec{z}), &\mbox{if }\vec{z}=(\frac{2}{3},\frac{2}{3}).
\end{cases}
\end{equation*}
Then we have that the dynamical systems
 $(S_\beta \times I, \mathcal{S\times B}(I),\mu_\beta\otimes \lambda_1,R')$ and $(\Omega\times S_\beta,\mathcal{A\times S},m_1\otimes \mu_\beta,R_\beta)$ are isomorphic.
The proof is similar and easier than that of Lemma \ref{R'toRbeta}.\\
(iv) Let $\mu$ be an arbitrary probability measure on $S_\beta$
and let $\tilde K_\beta: \Omega\times S_\beta\rightarrow \Omega \times S_\beta$ be given by
\begin{equation*}
\tilde K_\beta(\omega,\vec{z})=
\begin{cases}
(\omega, \beta \vec{z}-\vec{q}_i), &\mbox{if }\vec{z}\in  E_i,i\in\{0,1,2\}\\
(\sigma\omega, \beta \vec{z}-\vec{q}_i), &\mbox{if }\vec{z}\in  C_{ij},ij\in\{01,02,12\},\omega_1=0\\
(\sigma\omega, \beta \vec{z}-\vec{q}_j), &\mbox{if }\vec{z}\in  C_{ij},ij\in\{01,02,12\},\omega_1=1\\
(\omega, \beta \vec{z}), &\mbox{if }\vec{z}=(\frac{2}{3},\frac{2}{3}).
\end{cases}
\end{equation*}
It is easy to check that
$$m_1\otimes \mu \circ \tilde K_\beta^{-1}=m_1 \otimes \mu \circ R_\beta^{-1}=m_1\otimes \nu,$$
where
$$\nu=p \cdot \mu\circ \tau_1^{-1}+(1-p)\cdot \mu\circ \tau_2^{-1}$$
by using the same method of calculation in Lemma \ref{RbetatoKbeta}. Therefore, it follows from (i-iv) that $\tilde K_\beta$ has an invariant measure of the form $ m_1\otimes\mu_\beta$.
\end{remark}

Next we give another method to prove that $R_\beta$ has an acim by using Corollary \ref{tacim}.
Denote $I_0=[0,p),I_1= [p,1],J_0=[0,s),J_1=[s,s+t),J_2=[s+t,1].$
Let
\begin{align*}
\tilde {R}(\vec{z},w,v)&=
\begin{cases}
(\beta \vec{z}-\vec{q}_i,\frac{w}{p},\frac{v}{s}) &\mbox{if }\vec{z}\in  E_i\times [0,p)\times [0,s),i\in\{0,1,2\},\\
(\beta \vec{z}-\vec{q}_i,\frac{w-p}{1-p},\frac{v}{s}) &\mbox{if }\vec{z}\in  E_i\times [p,1]\times [0,s),i\in\{0,1,2\},\\
(\beta \vec{z}-\vec{q}_i,\frac{w}{p},\frac{v-s}{t}) &\mbox{if }\vec{z}\in  E_i\times [0,p)\times [s,s+t),i\in\{0,1,2\},\\
(\beta \vec{z}-\vec{q}_i,\frac{w-p}{1-p},\frac{v-s}{t}) &\mbox{if }\vec{z}\in  E_i\times [p,1]\times [s,s+t),i\in\{0,1,2\},\\
(\beta \vec{z}-\vec{q}_i,\frac{w}{p},\frac{v-s-t}{1-s-t}) &\mbox{if }\vec{z}\in  E_i\times [0,p)\times [s+t,1],i\in\{0,1,2\},\\
(\beta \vec{z}-\vec{q}_i,\frac{w-p}{1-p},\frac{v-s-t}{1-s-t}) &\mbox{if }\vec{z}\in  E_i\times [p,1]\times [s+t,1],i\in\{0,1,2\},\\
(\beta \vec{z}-\vec{q}_i,\frac{w}{p},\frac{v}{s}) &\mbox{if }\vec{z}\in  C_{ij}\times [0,p)\times [0,s),ij\in\{01,02,12\},\\
(\beta \vec{z}-\vec{q}_j,\frac{w-p}{1-p},\frac{v}{s}) &\mbox{if }\vec{z}\in  C_{ij}\times [p,1]\times [0,s),ij\in\{01,02,12\},\\
(\beta \vec{z}-\vec{q}_i,\frac{w}{p},\frac{v-s}{t}) &\mbox{if }\vec{z}\in  C_{ij}\times [0,p)\times [s,s+t),ij\in\{01,02,12\},\\
(\beta \vec{z}-\vec{q}_j,\frac{w-p}{1-p},\frac{v-s}{t}) &\mbox{if }\vec{z}\in  C_{ij}\times [p,1]\times [s,s+t),ij\in\{01,02,12\},\\
(\beta \vec{z}-\vec{q}_i,\frac{w}{p},\frac{v-s-t}{1-s-t}) &\mbox{if }\vec{z}\in  C_{ij}\times [0,p)\times [s+t,1],ij\in\{01,02,12\},\\
(\beta \vec{z}-\vec{q}_j,\frac{w-p}{1-p},\frac{v-s-t}{1-s-t}) &\mbox{if }\vec{z}\in  C_{ij}\times [p,1]\times [s+t,1],ij\in\{01,02,12\},\\
(\beta \vec{z},\frac{w}{p},\frac{v}{s}) &\mbox{if }\vec{z}\in C_{012}\times [0,p)\times [0,s),\\
(\beta \vec{z},\frac{w-p}{1-p},\frac{v}{s}) &\mbox{if }\vec{z}\in C_{012}\times [p,1]\times [0,s),\\
(\beta \vec{z},\frac{w}{p},\frac{v-s}{t}) &\mbox{if }\vec{z}\in C_{012}\times [0,p)\times [s,s+t),\\
(\beta \vec{z},\frac{w-p}{1-p},\frac{v-s}{t}) &\mbox{if }\vec{z}\in C_{012}\times [p,1]\times [s,s+t),\\
(\beta \vec{z},\frac{w}{p},\frac{v-s-t}{1-s-t}) &\mbox{if }\vec{z}\in C_{012}\times [0,p)\times [s+t,1],\\
(\beta \vec{z},\frac{w-p}{1-p},\frac{v-s-t}{1-s-t}) &\mbox{if }\vec{z}\in C_{012}\times [p,1]\times [s+t,1].
\end{cases}\\
\end{align*}
We can get the following theorem.
\begin{lemma}\label{R}
 Let $\beta\in(1,3/2)$, then $\tilde R$ admits an acim.
\end{lemma}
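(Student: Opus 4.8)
The plan is to apply Corollary \ref{tacim} to $\tilde R$ on the bounded domain $S:=S_\beta\times[0,1)\times[0,1)\subset\mathbb{R}^4$, in the same spirit as the proof of Theorem \ref{T}. First I would record that $\tilde R$ is piecewise affine with respect to the product partition $\mathcal{P}$ whose atoms are the products of the seven $\vec z$-pieces $E_0,E_1,E_2,C_{01},C_{12},C_{02},C_{012}$ with the $w$-intervals $\{[0,p),[p,1)\}$ and the $v$-intervals $\{[0,s),[s,s+t),[s+t,1)\}$. On each atom $\tilde R$ is one-to-one and $C^2$, and each atom is a bounded closed domain with piecewise-linear boundary, so conditions (a) and (b) preceding Corollary \ref{tacim} hold. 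On every atom the derivative of $\tilde R$ is the diagonal matrix $\mathrm{diag}(\beta,\beta,1/\ell_w,1/\ell_v)$ with $\ell_w\in\{p,1-p\}$ and $\ell_v\in\{s,t,1-s-t\}$, so $\tilde R$ expands in the $\vec z$-directions but need not expand in the $w,v$-directions; this is why a single step will not suffice and I pass to an iterate, exactly as was necessary for $T_\beta$ itself.

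Next I would compute the contraction constant for the iterate $\tilde R^k$. On each atom of $\bigvee_{i=0}^{k-1}\tilde R^{-i}\mathcal{P}$ the derivative of $(\tilde R^k_i)^{-1}$ is diagonal, equal to $\mathrm{diag}(\beta^{-k},\beta^{-k},\prod\ell_w,\prod\ell_v)$, where each of the last two entries is a product of $k$ factors drawn respectively from $\{p,1-p\}$ and from $\{s,t,1-s-t\}$. Hence
$$\Vert D(\tilde R^k_i)^{-1}\Vert=\sqrt{2\beta^{-2k}+\Big(\textstyle\prod\ell_w\Big)^2+\Big(\textstyle\prod\ell_v\Big)^2}\le\sqrt{2\beta^{-2k}+P^{2k}+Q^{2k}}=:c_k,$$
where $P=\max\{p,1-p\}<1$ and $Q=\max\{s,t,1-s-t\}<1$. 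Since $\beta>1$, $P<1$ and $Q<1$, we get $c_k\to 0$ as $k\to\infty$.

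The crucial third step is to pin down the geometric constant $a$ for the refined partition $\bigvee_{i=0}^{k-1}\tilde R^{-i}\mathcal{P}$ and to verify that it is bounded below by a positive constant \emph{independent of} $k$. The point is that every boundary edge of the $\vec z$-pieces in (\ref{partition}) is horizontal ($y=1/\beta$), vertical ($x=1/\beta$), or anti-diagonal ($x+y=\mathrm{const}$), and the branch inverses act in the $\vec z$-coordinates by the similarities $f_{\vec q}$, which translate and scale but do not rotate; therefore every atom of the refined partition still has $\vec z$-edges lying in these three fixed directions, so its minimal planar angle is $\pi/4$ and cannot degenerate under refinement. In the $(w,v)$-coordinates the branches act by $w\mapsto\ell_w w+(\cdot)$, $v\mapsto\ell_v v+(\cdot)$, mapping axis-parallel rectangles to axis-parallel rectangles with all dihedral angles equal to $\pi/2$. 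Consequently the minimal cone angle $\gamma$ over all atoms is a fixed $\gamma_0>0$, giving $a=\sin\gamma_0>0$ independent of $k$ (just as $a=\sqrt2/2$ was obtained in Theorem \ref{T}). Since $a$ is then a fixed positive constant while $c_k\to 0$, I can choose $k$ so large that $c_k\bigl(1+\tfrac1a\bigr)<1$, and Corollary \ref{tacim} yields an acim for $\tilde R$. I expect the third step to be the main obstacle: the expansion estimate is routine because the derivative is diagonal, but controlling the singular geometry of the four-dimensional atoms — products of planar polygons with $(w,v)$-rectangles — and showing their cone angles stay uniformly bounded away from $0$ under arbitrary refinement is the delicate point that makes the corollary applicable.
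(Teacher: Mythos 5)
Your proof follows the same route as the paper's: apply Corollary \ref{tacim} to an iterate $\tilde R^k$ with the product partition, read off the diagonal inverse derivative to get a contraction constant $c_k\to 0$, and check the cone constant $a$. The only difference is that you justify explicitly that $a$ stays bounded below uniformly in $k$ (via the fixed edge directions of the atoms), whereas the paper merely asserts $a>0$ for the refined partition; this is extra precision on the same argument rather than a different approach.
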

\begin{proof}

Use the partition $\mathcal{P}:\{E_i\times I_m\times J_n,C_{ij}\times I_m\times J_n,C_{012}\times I_m\times J_n,\},i=0,1,2;m=0,1;n=0,1,2$.

Consider the iterate $\tilde R^k$ and the corresponding partition $\vee_{i=0}^{k-1}\tilde R^{-i}(\mathcal{P})$. For $P_i\in \vee_{i=0}^{k-1}\tilde R^{-i}(\mathcal{P})$, let $\tilde R^k_i=\tilde R^k|_{P_i}$.
Since the derivative matrix of $(\tilde R_i^{k})^{-1}$ is
$$\begin{bmatrix}
              \frac{1}{\beta^k} & 0& 0& 0 \\
              0 & \frac{1}{\beta^k} & 0& 0 \\
              0 & 0 &p^l(1-p)^{k-l} &  0 \\
              0  & 0& 0& s^i t^j (1-s-t)^{k-i-j} \\
    \end{bmatrix}, $$
where $l,i,j\in\{0,1,\ldots,k\}$ and $0\leq i+j\leq k$, then the Euclidean matrix norm,
$$\Vert D(\tilde R_i^k)^{-1}\Vert=\sqrt{\frac{2}{\beta^{2k}}+p^{2l}(1-p)^{2(k-l)}+s^{2i} t^{2j} (1-s-t)^{2(k-i-j)}}.$$
Since $\beta>1$ and $0<p,s,t,s+t<1 $, then $\lim_{n\rightarrow\infty}\Vert D(\tilde R_i^k)^{-1}\Vert=0$ for any $P_i\in \vee_{i=0}^{k-1}\tilde R^{-i}(\mathcal{P})$. It means that we can choose an integer $k$ such that $$\Vert D(\tilde R_i^k)^{-1}\Vert<c$$  for any $0<c<1$.
For the partition $\vee_{i=0}^{k-1}\tilde R^{-i}(\mathcal{P})$,
$a$ is a positive constant which is not larger than 1 since any set in the partition is a polyhedra in $\mathbb{R}^4$.
 Let $c<\frac{a}{a+1}$, then we have
$$c(1 + \frac{1}{a})<1.$$
By Corollary \ref{tacim}, $\tilde R$ admits an acim.
\end{proof}
\begin{theorem}
 $R_\beta$ admits an acim.
\end{theorem}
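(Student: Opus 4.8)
The plan is to deduce the statement from Lemma \ref{R} by transporting the acim of $\tilde R$ to $\Omega\times\Upsilon\times S_\beta$ through a measure-theoretic isomorphism, exactly in the spirit of Lemma \ref{R'toRbeta}. By Lemma \ref{R}, $\tilde R$ admits an invariant measure $\tilde\mu$ on $S_\beta\times I\times I$ that is absolutely continuous with respect to the four-dimensional Lebesgue measure $\lambda_2\otimes\lambda_1\otimes\lambda_1$. The goal is to show that $\tilde\mu$ induces, via a conjugacy, an $R_\beta$-invariant measure $\mu^\ast$ on $\Omega\times\Upsilon\times S_\beta$ that is absolutely continuous with respect to $m_1\otimes m_2\otimes\lambda_2$, which is the desired acim.

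First I would build the conjugating isomorphism coordinate by coordinate. On the first interval coordinate, the branch map $w\mapsto w/p$ on $[0,p)$ and $w\mapsto (w-p)/(1-p)$ on $[p,1)$ is the binary generalized L\"uroth map, whose digit map $\phi_w$ is an isomorphism from $(I,\lambda_1)$ onto the one-sided shift $(\Omega,m_1,\sigma)$ with weights $\{p,1-p\}$; on the second interval coordinate the three-branch map associated with the partition $J_0,J_1,J_2$ yields analogously an isomorphism $\phi_v$ from $(I,\lambda_1)$ onto $(\Upsilon,m_2,\sigma')$ with weights $\{s,t,1-s-t\}$ (this is the GLS theory already invoked in Step 3). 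Setting
$$\Phi(\vec z,w,v)=\big(\phi_w(w),\phi_v(v),\vec z\big),$$
that is $\Phi=\iota\circ(I_{S_\beta}\times\phi_w\times\phi_v)$ with $\iota$ the coordinate reordering, produces a bimeasurable bijection, where bimeasurability of the interval digit maps follows from Theorem \ref{polish}. The essential point is the conjugacy $\Phi\circ\tilde R=R_\beta\circ\Phi$: it holds because $\phi_w$ intertwines the $w$-branch map with $\sigma$ and records in $\omega_1$ the binary choice governing the digit $\vec q_i$ versus $\vec q_j$ on $C_{ij}$, while $\phi_v$ intertwines the $v$-branch map with $\sigma'$ and records in $\upsilon_1$ the ternary choice on $C_{012}$, and the $S_\beta$-action of $\tilde R$ agrees branch by branch with that of $R_\beta$.

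Finally I would transport the measure. Since $\phi_w$ pushes $\lambda_1$ forward to $m_1$ and $\phi_v$ pushes $\lambda_1$ forward to $m_2$, the map $\Phi$ pushes the reference measure $\lambda_2\otimes\lambda_1\otimes\lambda_1$ forward to $m_1\otimes m_2\otimes\lambda_2$. Consequently, if $N$ is $(m_1\otimes m_2\otimes\lambda_2)$-null, then $\Phi^{-1}(N)$ is $(\lambda_2\otimes\lambda_1\otimes\lambda_1)$-null, so $\tilde\mu(\Phi^{-1}(N))=0$; this shows that $\mu^\ast:=\tilde\mu\circ\Phi^{-1}$ is absolutely continuous with respect to $m_1\otimes m_2\otimes\lambda_2$. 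Invariance is immediate from the conjugacy and the $\tilde R$-invariance of $\tilde\mu$, namely $\mu^\ast\circ R_\beta^{-1}=\tilde\mu\circ(\Phi\circ\tilde R)^{-1}=\tilde\mu\circ\tilde R^{-1}\circ\Phi^{-1}=\tilde\mu\circ\Phi^{-1}=\mu^\ast$. Hence $\mu^\ast$ is an $R_\beta$-invariant acim.

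I expect the main obstacle to be the verification of $\Phi\circ\tilde R=R_\beta\circ\Phi$ on the switch regions $C_{ij}$ and $C_{012}$, where one must match the L\"uroth digits $\omega_1,\upsilon_1$ produced by $\phi_w,\phi_v$ with the branch that $\tilde R$ selects according to which of the intervals $[0,p),[p,1)$ and $J_0,J_1,J_2$ contains $w$ and $v$; once $\Phi$ is known to conjugate the two systems and to carry $\lambda_2\otimes\lambda_1\otimes\lambda_1$ onto $m_1\otimes m_2\otimes\lambda_2$, the transfer of both invariance and absolute continuity is routine.
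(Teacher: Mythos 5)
Your proposal is correct and follows essentially the same route as the paper: both invoke Lemma \ref{R} for the acim of $\tilde R$ and transport it via the conjugacy $\iota\circ(I_{S_\beta}\times\phi_1\times\phi_2)$ built from the two generalized L\"uroth digit maps, which carry $\lambda_1$ to $m_1$ and $m_2$ respectively. Your write-up merely makes explicit the push-forward of null sets and the invariance computation, which the paper leaves as "easy to check."
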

\begin{proof}Set
\begin{equation*}
l'(x)=\begin{cases}
\frac{1}{p}, & \text{ if } x\in[0,p),\\
\frac{1}{1-p}, & \text{ if } x\in[p,1],\\
\end{cases}
\quad
h'(x)=\begin{cases}
0, & \text{ if } x\in[0,p),\\
\frac{p}{1-p}, & \text{ if } x\in[p,1].\\
\end{cases}
\end{equation*}
Let $\varphi'(x)=l'(x)\cdot x-h'(x)$ and
$$l'_n=l'_n(x):=l'(\varphi'^{n-1}(x)),\quad \quad h'_n=h'_n(x):=h'(\varphi'^{n-1}(x)).$$
Define the map $\phi_1:[0,1]\rightarrow \Omega$ as follows:
$$\phi_1:x=\sum_{n=1}^\infty\frac{h'_i}{l'_1l'_2\cdots l'_i}\mapsto (\omega_1,\omega_2,\ldots,),$$
where $\omega_n=\omega_n(x)=k-1 \iff \varphi^{n-1}(x)\in I_k$ for $I_1=[0,p)$ and $I_2=[p,1]$.\\
Similarly, set
\begin{equation*}
l''(x)=\begin{cases}
\frac{1}{s}, & \text{ if } x\in[0,s),\\
\frac{1}{t}, & \text{ if } x\in[s,s+t),\\
\frac{1}{1-s-t}, & \text{ if } x\in[s+t,1],\\
\end{cases}
\quad
h''(x)=\begin{cases}
0, & \text{ if } x\in[0,s),\\
\frac{s}{t}, & \text{ if } x\in[s,s+t),\\
\frac{s+t}{1-s-t}, & \text{ if } x\in[s+t,1].\\
\end{cases}
\end{equation*}
and
$$l''_n=l''_n(x):=l''(\varphi''^{n-1}(x)),\quad \quad h''_n=h''_n(w):=h''(\varphi''^{n-1}(x)),$$
where $\varphi''(x)=l''(x)\cdot x-h''(x)$.
Let $\phi_2:[0,1]\rightarrow \Upsilon$ be given by
$$\phi_2:x=\sum_{n=1}^\infty\frac{h''_i}{l''_1l''_2\cdots l''_i}\mapsto (\upsilon_1,\upsilon_2,\ldots,),$$
where $\upsilon_n=\upsilon_n(x)=k-1 \iff \varphi^{n-1}(x)\in J_k$
for $J_1=[0,s),J_2=[s,s+t)$ and $J_3=[s+t,1]$.\\
Consider the map $\phi=\iota\circ (I_{S_\beta}\times \phi_1\times \phi_2)$
from $(S_\beta \times [0,1] \times [0,1],\mathcal{S\times B}[0,1]\times \mathcal{B}[0,1],\nu,\tilde R)$ to
$(\Omega \times \Upsilon \times S_\beta,\mathcal{A\times B\times S},\mu,R_\beta)$,
where $\iota(\vec z,\omega,\upsilon)=(\omega,\upsilon,\vec z)$ and $I_{S_\beta}$ is the identity map on $S_\beta$.
It is easy to check that $\phi$ is an isomorphism by the definition, and the result follows from Lemma \ref{R}.
\end{proof}

\section{random transformation $K_\beta$ for $\frac{3}{2}<\beta\leq \beta^*$}

\subsection{A special class of $S_\beta$}For $3/2<\beta<2$, we should notice that there are holes in the attractor. Broomhead et al.\cite{BMS} discussed a special structure of those holes: they are all centered on three radial lines originating from the center of the attractor and extending to the three vertices. Since the Sierpinski triangle they discussed can be converted to $S_\beta$ through affine transformation, we can obtain the corresponding range of $\beta$.
Their set-up is as follows. Let $S_\lambda$ be the attractor of an IFS
$$\{ g_i(\vec{z})=\lambda\vec{z}+(1-\lambda)\vec{p}_i\},\  i=0,1,2,$$
where $\vec{p}_i=\frac{2}{3}(\cos\frac{2\pi i}{3},\sin\frac{2\pi i}{3})$, which are the vertices of an equilateral triangle $\Delta'$. Denote the central hole by $H'$, i.e. $H'=\Delta'\setminus \cup_{i=0}^2 g_i(\Delta')$.
Recall the following result in \cite[Proposition 3.7]{BMS}.
\begin{proposition}\label{SS}
Let $\lambda_*\approx 0.6478$ be the appropriate root of
$$x^3-x^2+x=\frac{1}{2}.$$
Then $S_\lambda$ has a non-empty interior if $\lambda\in[\lambda_*,\frac{2}{3})$, and moreover, each hole has the form $g_i^n(H')$, see Figure \ref{radialS1}.
\end{proposition}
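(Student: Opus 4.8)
The plan is to describe the complement $\Delta'\setminus S_\lambda$ recursively and to show that, for $\lambda$ in the stated range, the only genuine holes that survive are the radial copies $g_i^n(H')$. Each $g_i$ is the contraction of ratio $\lambda$ fixing the vertex $\vec p_i$, and $\Delta'=H'\cup\bigcup_{i=0}^2 g_i(\Delta')$, so a point $\vec z\in\Delta'$ lies in $S_\lambda$ iff $\vec z\in g_i(\Delta')$ and $g_i^{-1}(\vec z)\in S_\lambda$ for \emph{some} $i$. Dually, $\vec z$ is a hole point iff either $\vec z\in H'$, or for \emph{every} index $i$ with $\vec z\in g_i(\Delta')$ the point $g_i^{-1}(\vec z)$ is again a hole point. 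First I would record this recursive description; it shows immediately that $H'$ and all its iterates $g_w(H')=g_{w_1}\circ\cdots\circ g_{w_k}(H')$ over finite words $w$ are candidate holes, and it reduces the proposition to the claim that, for $\lambda\in[\lambda_*,2/3)$, every $g_w(H')$ with $w$ not a power of a single symbol is actually absorbed into $S_\lambda$.

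The crux is therefore a covering lemma asserting that the two-fold overlaps $g_i(\Delta')\cap g_j(\Delta')$ (and their relevant iterates) lie inside $S_\lambda$. The leverage comes from the asymmetry in the recursive description: a point of an overlap $g_i(\Delta')\cap g_j(\Delta')$ is a hole only if \emph{both} preimages $g_i^{-1}(\vec z)$ and $g_j^{-1}(\vec z)$ are holes, whereas membership in $S_\lambda$ needs only \emph{one} preimage in $S_\lambda$; hence overlaps tend to be filled. Granting that overlaps are covered, a hole point $\vec z\notin H'$ must lie in exactly one image $g_i(\Delta')$, so its unique preimage $g_i^{-1}(\vec z)\in\Delta'$ is again a hole; iterating yields a sequence of preimages that terminates in $H'$ after finitely many steps (an infinite sequence would converge to a point of $S_\lambda$) with indices forced to a single symbol, so $\vec z\in g_i^n(H')$. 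The threshold $\lambda_*$ should be exactly the value at which the relevant vertex of a sub-hole $g_i(H')$ lies on the boundary of a neighbouring image, so that for larger $\lambda$ the neighbouring images exhaust everything except the next radial copy; tracking this critical configuration through the contractions produces a polynomial relation whose balance of three factors is $\lambda^3-\lambda^2+\lambda=\tfrac12$, with unique real root $\lambda_*\approx0.6478$ (equivalently, under $\lambda=1/\beta$, the root $\beta^*$ of $x^3-2x^2+2x=2$). I expect this covering lemma — the precise bookkeeping of which parts of each sub-hole are absorbed by neighbouring images, and the exact determination of the critical point giving the cubic — to be the main obstacle.

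Once the holes are identified as $g_i^n(H')$, non-empty interior follows easily. For fixed $i$ these are similar triangles of diameter $\lambda^n\,\mathrm{diam}(H')$ contracting toward $\vec p_i$, so for each $\varepsilon>0$ only finitely many holes have diameter $\geq\varepsilon$; equivalently the open holes $(g_i^n(H'))^\circ$ form a locally finite family on $\Delta'\setminus\{\vec p_0,\vec p_1,\vec p_2\}$. Consequently, near any point of the interior of $\Delta'$ that avoids the (countably many) closed holes and the three vertices, the set $S_\lambda$ coincides with $\Delta'$ minus finitely many open triangles, and a small enough ball about such a point lies entirely in $S_\lambda$. Such points manifestly exist — for instance near the midpoints of the edges of $\Delta'$, away from the radial lines — so $\mathrm{int}(S_\lambda)\neq\varnothing$, and the radial hole structure $g_i^n(H')$ holds throughout $[\lambda_*,2/3)$.
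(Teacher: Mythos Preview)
The paper does not prove this proposition at all: it is quoted as \cite[Proposition~3.7]{BMS} and used as a black box. The only related work in the paper is the proof of Proposition~\ref{structure}, which merely transports the statement from the equilateral model $S_\lambda$ to the right-isosceles model $S_\beta$ via an explicit affine conjugacy $l$, checking $f_{\vec q_i}\circ l=l\circ g_i$ and $l(H')=H$. So there is no in-paper proof to compare your sketch against; any comparison would have to be with the original argument in \cite{BMS}.

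On the sketch itself: there is a gap beyond the covering-lemma obstacle you flag. Granting that the pairwise overlaps $g_i(\Delta')\cap g_j(\Delta')$ lie in $S_\lambda$, you correctly deduce that a hole point $\vec z\notin H'$ lies in a \emph{unique} $g_i(\Delta')$ and that $g_i^{-1}(\vec z)$ is again a hole, so backward iteration produces a finite word $w_1\cdots w_n$ with $\vec z\in g_{w_1}\cdots g_{w_n}(H')$. But nothing in that reasoning forces $w_1=\cdots=w_n$: knowing that each successive preimage avoids the overlaps only pins down $w_k$ uniquely at each step, it does not prevent the sequence from switching symbols. What actually makes the indices collapse is the separate geometric fact---this is where the threshold $\lambda_*$ enters in \cite{BMS}---that every mixed copy $g_ig_j^{n}(H')$ with $i\neq j$ is contained in $g_j(\Delta')$, hence in an overlap, hence (by your covering lemma) in $S_\lambda$. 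You need both ingredients, and they should be stated and proved separately; as written, the phrase ``indices forced to a single symbol'' is asserted rather than derived.
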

\begin{figure}[h]
\centering
\includegraphics[width=10.5cm,height=8cm]{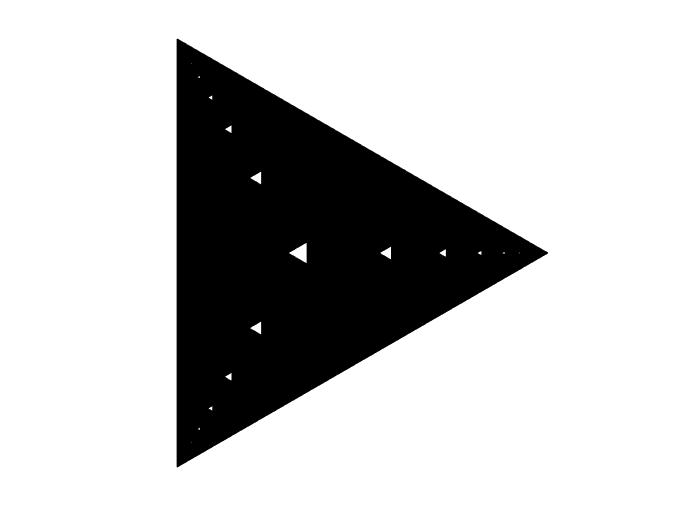}
\caption{$S_\lambda$ for $\lambda_* \leq \lambda < \frac{2}{3}$}
\label{radialS1}
\end{figure}
Notice that the IFS given by (\ref{IFS}) can also be writen as
$$f_{\vec{q}_i}(\vec{z})=\frac{1}{\beta}\vec{z}+(1-\frac{1}{\beta})\frac{\vec{q}_i}{\beta-1},i=0,1,2.$$
Since there is an invertible affine transformation $l$ from $S_\lambda$ to $S_\beta$,
then the two attractors are affinely equivalent when $\lambda=1/\beta$. Therefore we have the following proposition.
\begin{proposition}\label{structure}
Let $\beta^*\approx 1.5437$ be the root of
$$x^3-2x^2+2x=2.$$
Then $S_\beta$ has a non-empty interior if $\beta\in(\frac{3}{2},\beta^*]$, and each hole has the form $f_{
\vec{q}_i}^n(H)$, see Figure \ref{radial}.
\end{proposition}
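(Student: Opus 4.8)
The plan is to transport the structural result of Proposition~\ref{SS} from $S_\lambda$ to $S_\beta$ through an invertible affine conjugacy, and then to convert the parameter range $\lambda\in[\lambda_*,2/3)$ into the stated range for $\beta$ via the relation $\lambda=1/\beta$. The whole proof hinges on exhibiting one affine map and checking it intertwines the two IFSs.

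First I would write both systems in fixed-point form. The maps $g_i$ already have contraction ratio $\lambda$ and fixed point $\vec{p}_i$, while the excerpt records $f_{\vec{q}_i}(\vec{z})=\frac{1}{\beta}\vec{z}+(1-\frac{1}{\beta})\frac{\vec{q}_i}{\beta-1}$, so $f_{\vec{q}_i}$ has contraction ratio $1/\beta$ and fixed point $\vec{q}_i/(\beta-1)$. Setting $\lambda=1/\beta$ equalises the two ratios. I would then let $l$ be the unique affine map determined by the three conditions $l(\vec{p}_i)=\vec{q}_i/(\beta-1)$ for $i=0,1,2$; this map exists and is invertible because the $\vec{p}_i$ are the vertices of the equilateral triangle $\Delta'$ and the $\vec{q}_i/(\beta-1)$ are the non-collinear vertices of $\Delta$. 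A direct computation, using that $g_i$ and $f_{\vec{q}_i}$ share the common ratio $\lambda=1/\beta$ and that $l$ matches their fixed points, then yields $l\circ g_i=f_{\vec{q}_i}\circ l$ for each $i$.

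From the conjugacy I would deduce $l(S_\lambda)=S_\beta$: applying $l$ to $S_\lambda=\cup_i g_i(S_\lambda)$ gives $l(S_\lambda)=\cup_i f_{\vec{q}_i}(l(S_\lambda))$, so $l(S_\lambda)$ is a non-empty compact solution of the self-similarity equation for $\{f_{\vec{q}_i}\}$ and hence equals $S_\beta$ by uniqueness of the attractor. Since $l$ is an invertible affine map, it is a homeomorphism of $\mathbb{R}^2$ and carries interior to interior, so $S_\beta$ has non-empty interior exactly when $S_\lambda$ does. Moreover $l(\Delta')=\Delta$, so $l$ sends the central hole $H'=\Delta'\setminus\cup_i g_i(\Delta')$ to $H:=l(H')=\Delta\setminus\cup_i f_{\vec{q}_i}(\Delta)$, and iterating the conjugacy gives $l(g_i^n(H'))=f_{\vec{q}_i}^n(l(H'))=f_{\vec{q}_i}^n(H)$, so the radial hole structure of Proposition~\ref{SS} transfers verbatim.

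It remains to match the parameters. Since $\beta=1/\lambda$ is strictly decreasing, the range $\lambda\in[\lambda_*,2/3)$ corresponds to $\beta\in(3/2,1/\lambda_*]$, the closed endpoint $\lambda=\lambda_*$ mapping to the closed endpoint $\beta=\beta^*:=1/\lambda_*$ and the open endpoint $\lambda=2/3$ to the open endpoint $\beta=3/2$. Substituting $\lambda_*=1/\beta^*$ into $\lambda_*^3-\lambda_*^2+\lambda_*=1/2$ and clearing denominators yields $(\beta^*)^3-2(\beta^*)^2+2\beta^*=2$, which is precisely the defining equation of $\beta^*$, and $1/0.6478\approx1.5437$ confirms the numerics. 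The only point requiring genuine care is verifying that the single map $l$ is simultaneously a conjugacy for all three $g_i$; but this is forced once the contraction ratios agree and the fixed points are matched, so the main task is really the bookkeeping of these two conditions rather than any hard estimate.
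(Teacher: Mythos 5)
Your proposal is correct and follows essentially the same route as the paper: both transport Proposition \ref{SS} to $S_\beta$ via an invertible affine conjugacy $l$ with $l(\vec{p}_i)=\vec{q}_i/(\beta-1)$ under the substitution $\lambda=1/\beta$, and then translate the parameter range and the defining cubic. The only difference is cosmetic — you observe that the intertwining $l\circ g_i=f_{\vec{q}_i}\circ l$ is forced because both IFSs consist of homotheties with equal ratios and matched fixed points, whereas the paper writes down the explicit matrix for $l$ and verifies the relation by direct computation.
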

\begin{figure}[h]
\centering
\includegraphics[width=10.5cm,height=8cm]{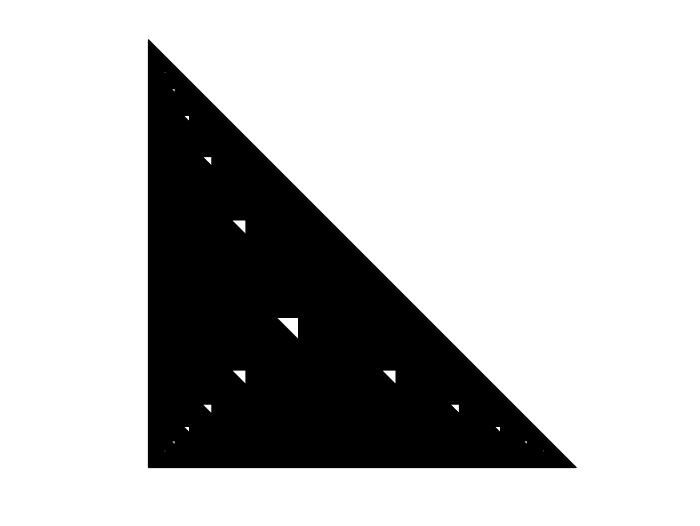}
\caption{$S_\beta$ for $\frac{3}{2}<\beta\leq \beta^*$}
\label{radial}
\end{figure}
\begin{proof}
Let $$l(\vec{z})=\frac{1}{\beta-1}\begin{bmatrix}
              -1/2 & \sqrt{3}/2\\
             -1/2 & -\sqrt{3}/2 \\
    \end{bmatrix}\vec{z}+\frac{1}{\beta-1}\begin{bmatrix}
             1/3\\
             1/3\\
    \end{bmatrix}.$$
Then we have $l(\vec{p}_i)=\frac{1}{\beta-1}\vec{q}_i,i\in\{0,1,2\}$, which implies
\begin{equation}\label{l1}
l(\Delta')=\Delta.
\end{equation}
Since
\begin{align*}
f_{\vec{q}_i}\circ l(\vec{z})
&=\frac{1}{\beta} (\frac{1}{\beta-1}\begin{bmatrix}
              -1/2 & \sqrt{3}/2\\
             -1/2 & -\sqrt{3}/2 \\
    \end{bmatrix}\vec{z}+\frac{1}{\beta-1}\begin{bmatrix}
             1/3\\
             1/3\\
    \end{bmatrix}) +(1-\frac{1}{\beta})\frac{\vec{q}_i}{\beta-1}\\
&=\frac{1}{\beta(\beta-1)}\begin{bmatrix}
              -1/2 & \sqrt{3}/2\\
             -1/2 & -\sqrt{3}/2 \\
    \end{bmatrix}\vec{z}+\frac{1}{\beta(\beta-1)}\begin{bmatrix}
             1/3\\
             1/3\\
    \end{bmatrix} +(1-\frac{1}{\beta})l(\vec{p}_i),
\end{align*}
and
\begin{align*}
l \circ g_i(\vec{z})
&=\frac{1}{\beta-1}\begin{bmatrix}
              -1/2 & \sqrt{3}/2\\
             -1/2 & -\sqrt{3}/2 \\
    \end{bmatrix}(\lambda\vec{z}+(1-\lambda)\vec{p}_i)+\frac{1}{\beta-1}\begin{bmatrix}
             1/3\\
             1/3\\
    \end{bmatrix}\\
&=\frac{\lambda}{\beta-1}\begin{bmatrix}
              -1/2 & \sqrt{3}/2\\
             -1/2 & -\sqrt{3}/2 \\
    \end{bmatrix}\vec{z}+\frac{1-\lambda}{\beta-1}\begin{bmatrix}
              -1/2 & \sqrt{3}/2\\
             -1/2 & -\sqrt{3}/2 \\
    \end{bmatrix}\vec{p}_i+\frac{1}{\beta-1}\begin{bmatrix}
             1/3\\
             1/3\\
    \end{bmatrix}\\
&=\frac{\lambda}{\beta-1}\begin{bmatrix}
              -1/2 & \sqrt{3}/2\\
             -1/2 & -\sqrt{3}/2 \\
    \end{bmatrix}\vec{z}+
(1-\lambda)(l(p_i)-\frac{1}{\beta-1}\begin{bmatrix}
             1/3\\
             1/3\\
    \end{bmatrix})
+\frac{1}{\beta-1}\begin{bmatrix}
             1/3\\
             1/3\\
    \end{bmatrix}\\
&=\frac{\lambda}{\beta-1}\begin{bmatrix}
              -1/2 & \sqrt{3}/2\\
             -1/2 & -\sqrt{3}/2 \\
    \end{bmatrix}\vec{z}+
(1-\lambda)l(p_i)
+\frac{\lambda}{\beta-1}\begin{bmatrix}
             1/3\\
             1/3\\
    \end{bmatrix},
\end{align*}
then
$f_{\vec{q}_i}\circ l=l \circ g_i$ by $\lambda=1/\beta$. From this, we can get
\begin{equation}\label{l2}
l(H')
=l(\Delta')\setminus \cup_{i=0}^2 l(g_i(\Delta'))
=\Delta\setminus \cup_{i=0}^2 f_{\vec{q}_i}(\Delta)
=H.
\end{equation}

If $\beta\in(\frac{3}{2},\beta^*]$, then $\lambda\in[\lambda_*,\frac{2}{3})$. By Proposition \ref{SS}(see \cite[Proposition 3.7]{BMS} for details),
we have that for $n\geq 2$,
\begin{itemize}[leftmargin=*]
\item[](1) $g_ig_j^{n-1}(H')\subset g_j(\Delta')$;
\item[](2) $g_ig_j^{n-1}(H')\cap g_jg_i^{n-1}(H')=\emptyset$.
\end{itemize}
Then it follows from (\ref{l1}) and (\ref{l2}) that
\begin{align*}
(1)f_{\vec{q}_i}f_{\vec{q}_j}^{n-1}(H)&=f_{\vec{q}_i}f_{\vec{q}_j}^{n-1}(l(H')) =f_{\vec{q}_i}f_{\vec{q}_j}^{n-2}(l\circ g_i(H')) =l(g_ig_j^{n-1}(H'))\\
&\subset l\circ g_j(\Delta')=f_{\vec{q}_j}\circ l(\Delta')=f_{\vec{q}_j}(\Delta),\\
(2)f_{\vec{q}_i}f_{\vec{q}_j}^{n-1}(H)&\cap f_{\vec{q}_j}f_{\vec{q}_i}^{n-1}(H)
=l(g_ig_j^{n-1}(H')\cap g_jg_i^{n-1}(H'))
=\emptyset.
\end{align*}
\end{proof}
\subsection{Definition of $K_\beta$ for $\beta\in(3/2,\beta^*]$} Because of the disappearance of the triple overlap, we only need one two-sided coin to be tossed to determine the transformation on the point. However, the partition of $S_\beta$ is complicated by the presence of holes.

First we partition $\Delta_1$ into $\tilde{E}_i$ and $\tilde{C}_{ij}$ (see Figure \ref{P2}), where
\begin{align*}
\tilde{E}_0&=([0,\frac{1}{\beta})\times [0,\frac{1}{\beta})) \backslash H,\\
\tilde{E}_1&=\{(x,y): 0 \leq y < \frac{1}{\beta},\frac{1}{\beta(\beta-1)} < x+y  \leq  \frac{1}{\beta-1}\} \backslash H,\\
\tilde{E}_2&=\{(x,y): 0 \leq x < \frac{1}{\beta},\frac{1}{\beta(\beta-1)}<x+y \leq \frac{1}{\beta-1}\} \backslash H,\\
\tilde{C}_{01}&=\{(x,y): x\geq \frac{1}{\beta}, y\geq  0 , x+y \leq \frac{1}{\beta(\beta-1)}\},\\
\tilde{C}_{12}&=\{(x,y): x\geq \frac{1}{\beta}, y\geq \frac{1}{\beta}, x+y \leq \frac{1}{\beta-1}\},\\
\tilde{C}_{02}&=\{(x,y): x\geq 0,  y\geq \frac{1}{\beta}, x+y \leq  \frac{1}{\beta(\beta-1)}\}.
\end{align*}
Here $H=\{(x,y): x<\frac{1}{\beta},y<\frac{1}{\beta},x+y>\frac{1}{\beta(\beta-1)}\}$.

Then we have the following lemma.
\begin{figure}[h]
\begin{tikzpicture}[scale=3.6,very thick]
\pgfsetfillopacity{0.6}
\fill[fill=blue!50,draw=black,thin] (0,0) -- (25/24,0)node[below]{$\frac{1}{\beta(\beta-1)}$}-- (0,25/24)node[left]{$\frac{1}{\beta(\beta-1)}$}--cycle;
\fill[fill=blue!50,draw=black,thin] (5/8,0)node[below]{$\frac{1}{\beta}$}--(5/3,0)node[below]{$\frac{1}{\beta-1}$}--(5/8,25/24)--cycle;
\fill[fill=blue!50,draw=black,thin] (0,5/8)node[left]{$\frac{1}{\beta}$} -- (25/24,5/8)-- (0,5/3)node[left]{$\frac{1}{\beta-1}$}--cycle;
\draw [->] (-0.3,0) -- (2,0) node[at end, below] {$x$};
\draw [->] (0,-0.15) -- (0,1.9) node[at end, left] {$y$};
\node[below] at (4/24,2/8) {$\tilde{E}_0$};
\node[below] at (3/4,2/8) {$\tilde{C}_{01}$};
\node[below] at (29/24,2/8) {$\tilde{E}_1$};
\node[below] at (4/24,20/24) {$\tilde{C}_{02}$};
\node[below] at (3/4,20/24) {$\tilde{C}_{12}$};
\node[below] at (4/24,29/24) {$\tilde{E}_2$};
\node[below] at (13/24,16/24) {$H$};
\end{tikzpicture}
\caption{$\Delta_1$ for $\frac{3}{2}<\beta\leq \beta^*$}
\label{P2}
\end{figure}
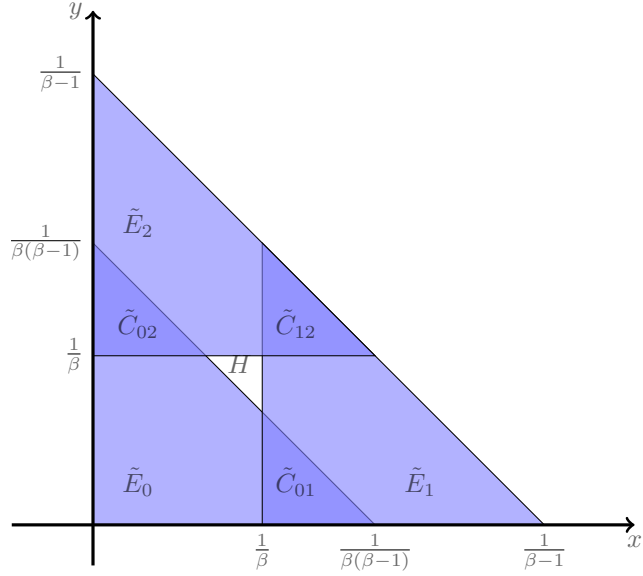

\begin{lemma}\label{delta2}
For $\beta\in(3/2,\beta^*]$.
Let $(x,y)\in S_\beta$ and $(x,y)=\sum_{i=1}^\infty\frac{a_i}{\beta^i}$ with $a_i\in\{\vec{q}_0,\vec{q}_1,\vec{q}_2\}$ be a representation of $(x,y)$ in base $\beta$. one has\\
(\romannumeral1) If $(x,y)\in \tilde{E}_i$ for some $i\in\{0,1,2\}$, then $a_1=\vec{q}_i$,\\
(\romannumeral2) If $(x,y)\in \tilde{C}_{ij}$ for some $ij\in\{01,12,02\}$, then $a_1\in\{\vec{q}_i,\vec{q}_{j}\}$.
\end{lemma}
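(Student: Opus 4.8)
The plan is to mirror the proof of Lemma \ref{delta1}, since the two statements have the same shape; the only genuinely new ingredient is that the hypothesis $\beta>3/2$ (equivalently, the emptiness of the triple overlap) must be invoked to handle the double-overlap regions $\tilde C_{ij}$. First I would record the three elementary consequences of fixing the leading digit of a representation $\vec z=\sum_{i\geq 1}a_i\beta^{-i}$ with $a_i\in\{\vec q_0,\vec q_1,\vec q_2\}$. Writing $\vec q_0=(0,0)$, $\vec q_1=(1,0)$, $\vec q_2=(0,1)$ and using $\|a_i\|_1\leq 1$ together with the geometric sum $\sum_{i\geq 2}\beta^{-i}=\tfrac{1}{\beta(\beta-1)}$, one obtains: if $a_1=\vec q_0$ then $x+y\leq\tfrac{1}{\beta(\beta-1)}$; if $a_1=\vec q_1$ then $x\geq\tfrac1\beta$; and if $a_1=\vec q_2$ then $y\geq\tfrac1\beta$. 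These three implications drive both parts.

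For part (\romannumeral1) I would eliminate the two ``wrong'' digits in each case. On $\tilde E_0\subseteq[0,\tfrac1\beta)\times[0,\tfrac1\beta)$ both $x<\tfrac1\beta$ and $y<\tfrac1\beta$ hold, so $a_1=\vec q_1$ and $a_1=\vec q_2$ are each excluded by the displayed implications, forcing $a_1=\vec q_0$. On $\tilde E_1$, where $y<\tfrac1\beta$ and $x+y>\tfrac{1}{\beta(\beta-1)}$, the choice $a_1=\vec q_0$ violates $x+y\leq\tfrac{1}{\beta(\beta-1)}$ while $a_1=\vec q_2$ violates $y<\tfrac1\beta$, so $a_1=\vec q_1$; the case $\tilde E_2$ is symmetric.

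For part (\romannumeral2) the single excluded digit is ruled out by the strict inequality $\tfrac2\beta>\tfrac{1}{\beta(\beta-1)}$, which holds precisely because $\beta>3/2$ (it rearranges to $2(\beta-1)>1$). Concretely, on $\tilde C_{01}$ one has $x\geq\tfrac1\beta$ and $x+y\leq\tfrac{1}{\beta(\beta-1)}$, so if $a_1=\vec q_2$ then also $y\geq\tfrac1\beta$, giving $x+y\geq\tfrac2\beta>\tfrac{1}{\beta(\beta-1)}$, a contradiction; hence $a_1\in\{\vec q_0,\vec q_1\}$. On $\tilde C_{12}$, where $x\geq\tfrac1\beta$ and $y\geq\tfrac1\beta$, assuming $a_1=\vec q_0$ forces $x+y\leq\tfrac{1}{\beta(\beta-1)}<\tfrac2\beta\leq x+y$, again impossible, so $a_1\in\{\vec q_1,\vec q_2\}$; finally $\tilde C_{02}$ is symmetric to $\tilde C_{01}$.

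The main conceptual departure from Lemma \ref{delta1} is exactly this last step: in the regime $\beta\leq 3/2$ the three images $f_{\vec q_i}(\Delta)$ genuinely overlap in a triple-overlap cell $C_{012}$, where all three digits can occur and no two-element conclusion is available; here, because $\beta>3/2$, the inequality $2(\beta-1)>1$ guarantees that a point in $\tilde C_{ij}$ cannot meet the coordinate bound forced by the third digit, which is what upgrades the conclusion to the sharper alternative $a_1\in\{\vec q_i,\vec q_j\}$. I would also verify that the defining inequalities of the $\tilde C_{ij}$ — which, unlike the $\beta\leq 3/2$ case, no longer carry an explicit upper bound on the ``free'' coordinate — are consistent with this, noting that $x\geq\tfrac1\beta$ together with $x+y\leq\tfrac{1}{\beta(\beta-1)}$ already forces $y<\tfrac1\beta$ when $\beta>3/2$, so the regions are well posed and the argument closes.
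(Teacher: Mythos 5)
Your proof is correct and follows essentially the same route as the paper, which simply states that the argument is ``similar to Lemma \ref{delta1}'' and leaves the details to the reader. You have faithfully reconstructed that argument, and you correctly identify the one point where it must be adapted: since $\tilde C_{ij}$ no longer carries an explicit bound on the free coordinate, the excluded digit is ruled out via $\tfrac{2}{\beta}>\tfrac{1}{\beta(\beta-1)}$, which is exactly where the hypothesis $\beta>3/2$ enters.
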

\begin{proof} The proof is similar with Lemma \ref{delta1}.
\end{proof}

From the discussion in Proposition \ref{structure}, we know that all holes $f_{\vec{q}_i}^n(H)$ in the attractor $S_\beta$ are in $\tilde{E}_i$(see Figure \ref{radial}).
And the `holes' $f_{\vec{q}_i}f_{\vec{q}_j}^n(H)(i\neq j)$ that should exist in $\tilde{C}_i$ are covered by $f_{\vec{q}_j}(S_\beta)$, i.e. $f_{\vec{q}_i}f_{\vec{q}_j}^n(H)(i\neq j)\subset S_\beta$,
which leaves only one choice $\vec{q}_j$ for the first digit of the expansions of the points in $f_{\vec{q}_i}f_{\vec{q}_j}^n(H)$.
Now we can partition the Sierpinski carpet $S_\beta$ into equality regions $E_i$ and switch regions $C_{ij}$.
\begin{align*}
E_0&=\tilde{E}_0
\bigcup \cup_{n=1}^\infty f_{\vec{q}_1}f_{\vec{q}_0}^n(H)
\bigcup \cup_{n=1}^\infty f_{\vec{q}_2}f_{\vec{q}_0}^n(H)
\setminus\cup_{n=0}^\infty f_{\vec{q}_0}^nH,\\
E_1&=\tilde{E}_1
\bigcup \cup_{n=1}^\infty f_{\vec{q}_0}f_{\vec{q}_1}^n(H)
\bigcup \cup_{n=1}^\infty f_{\vec{q}_2}f_{\vec{q}_1}^n(H)
\setminus \cup_{n=0}^\infty f_{\vec{q}_1}^nH,\\
E_2&=\tilde{E}_2
\bigcup \cup_{n=1}^\infty f_{\vec{q}_1}f_{\vec{q}_2}^n(H)
\bigcup \cup_{n=1}^\infty f_{\vec{q}_0}f_{\vec{q}_2}^n(H)
\setminus \cup_{n=0}^\infty f_{\vec{q}_2}^nH,\\
C_{01}&=\tilde{C}_{01}\setminus  (\cup_{n=1}^\infty f_{\vec{q}_0}f_{\vec{q}_1}^n(H) \bigcup \cup_{n=1}^\infty f_{\vec{q}_1}f_{\vec{q}_0}^n(H)),\\
C_{12}&=\tilde{C}_{12}\setminus  (\cup_{n=1}^\infty f_{\vec{q}_2}f_{\vec{q}_1}^n(H) \bigcup \cup_{n=1}^\infty f_{\vec{q}_1}f_{\vec{q}_2}^n(H)),\\
C_{02}&=\tilde{C}_{02}\setminus  (\cup_{n=1}^\infty f_{\vec{q}_0}f_{\vec{q}_2}^n(H) \bigcup \cup_{n=1}^\infty f_{\vec{q}_2}f_{\vec{q}_0}^n(H)).
\end{align*}
The selection of the digits in the expansion becomes more accurate.
\begin{lemma}\label{lem w1}
For $\beta\in(3/2,\beta^*]$.
Let $\vec{z}\in S_\beta$ and $\vec{z}=\sum_{i=1}^\infty\frac{a_i}{\beta^i}$ with $a_i\in\{\vec{q}_0,\vec{q}_1,\vec{q}_2\}$ be a representation of $\vec{z}$ in base $\beta$. one has\\
(\romannumeral1) If $\vec{z}\in E_i$ for some $i\in\{0,1,2\}$, then $a_1=\vec{q}_i$,\\
(\romannumeral2) If $\vec{z}\in C_{ij}$ for some $ij\in\{01,12,02\}$, then $a_1\in\{\vec{q}_i,\vec{q}_{j}\}$.
\end{lemma}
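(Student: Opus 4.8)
The plan is to reduce both assertions to Lemma~\ref{delta2} together with the description of the holes in Proposition~\ref{structure}, using the elementary principle that the admissible first digits of a point $\vec{z}\in S_\beta$ are exactly those $\vec{q}_i$ for which $\vec{z}\in f_{\vec{q}_i}(S_\beta)$.

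Part (ii) is immediate. By construction $C_{ij}\subseteq\tilde{C}_{ij}$, so every $\vec{z}\in C_{ij}$ already lies in $\tilde{C}_{ij}$ and Lemma~\ref{delta2}(ii) gives $a_1\in\{\vec{q}_i,\vec{q}_j\}$ with no reference to the holes at all.

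For part (i) I would split $E_i$ along its defining union. The points of $E_i$ that come from $\tilde{E}_i$ (namely $\tilde{E}_i$ with the genuine holes $\cup_{n\ge0}f_{\vec{q}_i}^n(H)$ removed) still lie in $\tilde{E}_i$, so Lemma~\ref{delta2}(i) forces $a_1=\vec{q}_i$ directly. The real work is the adjoined covered holes $f_{\vec{q}_k}f_{\vec{q}_i}^{n}(H)$ with $k\ne i$. I would first record the elementary identity $f_{\vec{q}_i}(\Delta)\cap f_{\vec{q}_k}(\Delta)=\tilde{C}_{ik}$, obtained by intersecting the defining inequalities of the two sub-triangles. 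Combining the inclusion $f_{\vec{q}_k}f_{\vec{q}_i}^{n}(H)\subset f_{\vec{q}_i}(\Delta)$ (established in the proof of Proposition~\ref{structure}) with the trivial $f_{\vec{q}_k}f_{\vec{q}_i}^n(H)\subset f_{\vec{q}_k}(\Delta)$ shows that such a point $\vec{z}$ lies in $\tilde{C}_{ik}$, whence Lemma~\ref{delta2}(ii) narrows the first digit to $a_1\in\{\vec{q}_i,\vec{q}_k\}$.

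The decisive step is to exclude $a_1=\vec{q}_k$. Were $\vec{q}_k$ an admissible first digit, then $f_{\vec{q}_k}^{-1}(\vec{z})=\beta\vec{z}-\vec{q}_k$ would belong to $S_\beta$; but the membership $\vec{z}\in f_{\vec{q}_k}f_{\vec{q}_i}^{n}(H)$ says precisely that $f_{\vec{q}_k}^{-1}(\vec{z})\in f_{\vec{q}_i}^{n}(H)$, which Proposition~\ref{structure} identifies as a genuine hole of $S_\beta$, hence disjoint from $S_\beta$ --- a contradiction. Therefore $a_1=\vec{q}_i$, completing part (i). I expect this digit-forcing argument to be the only genuine obstacle: it is exactly the place where one must exploit that a true hole $f_{\vec{q}_i}^n(H)$ is absent from $S_\beta$ while a covered hole $f_{\vec{q}_k}f_{\vec{q}_i}^n(H)$, although contained in $S_\beta$, is reachable only through the single map $f_{\vec{q}_i}$ --- which is precisely why these covered holes were reassigned from the switch region $C_{ik}$ to the equality region $E_i$ when the partition was defined.
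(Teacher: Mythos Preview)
Your proposal is correct and follows essentially the same route as the paper. The paper's proof is terser --- it simply states that it suffices to handle $\vec{z}\in f_{\vec{q}_i}f_{\vec{q}_j}^n(H)$ (your covered-hole case with the indices relabeled), invokes Lemma~\ref{delta2} via the inclusion $f_{\vec{q}_i}f_{\vec{q}_j}^n(H)\subset\tilde{C}_{ij}$, and then derives the same contradiction from $f_{\vec{q}_i}^{-1}(\vec{z})\in f_{\vec{q}_j}^n(H)$ landing in a genuine hole --- but the logical structure, the reduction to Lemma~\ref{delta2}, and the decisive hole-contradiction are identical to yours.
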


\begin{proof}
It is enough to prove that $a_1=\vec{q}_j$ for $\vec{z}\in f_{\vec{q}_i}f_{\vec{q}_j}^n(H)$.

Suppose $a_1\neq \vec{q}_j$. Since $f_{\vec{q}_i}f_{\vec{q}_j}^n(H)\subset \tilde{C}_{ij}$, it follows from Lemma \ref{delta2} that $a_1=\vec{q}_i$. Then
$$f_{\vec{q}_i}^{-1}(\vec{z})= \beta\vec{z}-\vec{q}_i=\sum_{i=1}^\infty\frac{a_{i+1}}{\beta^i}\in S_\beta,$$
which is a contradiction to the fact that $f_{\vec{q}_i}^{-1}(\vec{z})\in f_{\vec{q}_j}^n(H)$ which is a hole.
\end{proof}
Let
$$C=C_{01}\cup C_{12}\cup C_{02},\ \ E=E_0 \cup E_1 \cup E_2.$$
Recall $ \Omega= \{0,1\}^\mathbb{N}$ with the product $\sigma$-algebra $\mathcal{A}$ and $\sigma:\Omega\rightarrow\Omega$ is the left shift,
and define $ K_\beta:\Omega \times S_\beta \rightarrow \Omega \times S_\beta$ by

\begin{equation*}
K_\beta(\omega,\vec{z})=
\begin{cases}
(\omega,\beta\vec{z}-\vec{q}_i), &\mbox{if $\vec{z}\in E_i,i=0,1,2$},\\
(\sigma\omega,\beta \vec{z}-\vec{q}_i), &\mbox{if $\omega_1=0$ and $\vec{z}\in C_{ij},\ ij\in\{01,12,02\}$,}\\
(\sigma\omega,\beta \vec{z}-\vec{q}_j), &\mbox{if $\omega_1=1$ and $\vec{z}\in C_{ij},\ ij\in\{01,12,02\}$.}
\end{cases}
\end{equation*}
The digits are given by
\begin{equation*}
d_1 = d_1(w,\vec{z})=
\begin{cases}
\vec{q}_i,  &\mbox{if $\vec{z}\in E_i,i=0,1,2$},\\
\  &\mbox{or $(\omega,\vec{z})\in \{\omega_1=0\}\times C_{ij},\ ij\in\{01,12,02\}$,}\\
\vec{q}_j, &\mbox{if $(\omega,\vec{z})\in \{\omega_1=1\}\times C_{ij},\ ij\in\{01,12,02\}$.}
\end{cases}
\end{equation*}
Then
\begin{equation*}
K_\beta(\omega,\vec{z})=
\begin{cases}
(\omega,\beta\vec{z}-d_1), &\mbox{if $\vec{z}\in E$},\\
(\sigma\omega,\beta \vec{z}-d_1), &\mbox{if $\vec{z}\in C$}.
\end{cases}
\end{equation*}

Set $d_n=d_n(\omega,\vec{z})=d_1(K_\beta^{n-1}(\omega,\vec{z}))$, and $\pi_2:\Omega \times S_\beta \rightarrow S_\beta$ be the canonical projection onto the second coordinate, then
$$\Vert\vec{z}-\sum_{i=1}^n\frac{d_i}{\beta^i}\Vert_1=\frac{\Vert\pi_2(K_\beta^{n}(\omega,\vec{z}))\Vert_1}{\beta_n}.$$
Since $\pi_2(K_\beta^{n}(w,\vec{z}))\in \Delta$ and $\Delta$ is a bounded set in $\mathbb{R}^2$, it follows that
 for all $\omega\in\Omega$ and for all $\vec{z}\in S_\beta$ one has
$$\vec{z}=\sum_{i=1}^\infty \frac{d_i}{\beta^i}= \sum_{i=1}^\infty \frac{d_i(\omega,\vec{z})}{\beta^i}.$$

One can see that the random map $K_\beta$ for $\beta\in(3/2,\beta^*]$ satisfies similar properties as that for  $\beta\in(1,3/2]$. For each point $\vec{z}\in S_\beta$, consider the set
$$D_{\vec{z}}=\{(d_1(\omega,\vec{z}),d_2(\omega,\vec{z}),\ldots):\omega\in\Omega\}.$$
\begin{theorem}
Suppose $\omega,\omega'\in\Omega$ are such that $\omega\prec\omega'$. Then
$$(d_1(\omega,\vec{z}),d_2(\omega,\vec{z}),\ldots)\preceq (d_1(\omega',\vec{z}),d_2(\omega',\vec{z}),\ldots).$$
\end{theorem}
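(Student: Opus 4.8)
The plan is to reproduce the argument of Theorem~\ref{3.1}, which is in fact simpler here: because the triple overlap $C_{012}$ has disappeared in the regime $\beta\in(3/2,\beta^*]$, only a single two-sided coin is ever tossed and only the one shift $\sigma$ on $\Omega$ is involved. First I would set $m:=\min\{i:\omega_i\neq\omega'_i\}$. Since $\omega\prec\omega'$ and the alphabet is $\{0,1\}$, this forces $\omega_m=0$, $\omega'_m=1$, and $\omega_i=\omega'_i$ for all $i<m$. The goal is then to show that the two digit sequences agree up to some position and that, at the first position where they differ, the digit coming from $\omega$ is strictly smaller.

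Recall that $K_\beta$ shifts the $\Omega$-coordinate only when the orbit visits the switch region $\Omega\times C$, and leaves it fixed on $\Omega\times E$. I would let $t_j$ denote the time of the $j$-th visit of the orbit of $(\omega,\vec z)$ to $\Omega\times C$, and record the bookkeeping fact that at the $j$-th such visit the coin consumed is exactly the original symbol $\omega_j$ (the preceding $j-1$ visits having shifted away $\omega_1,\dots,\omega_{j-1}$). The key point is that the evolution of the $S_\beta$-coordinate is deterministic on $E$ and coin-determined on $C$; hence, so long as the coins consumed agree, the two orbits have identical $S_\beta$-coordinate. Since $\omega_i=\omega'_i$ for $i<m$, the choices made at the first $m-1$ visits coincide, so by induction $\pi_2(K_\beta^i(\omega,\vec z))=\pi_2(K_\beta^i(\omega',\vec z))$ for all $0\le i\le t_m$, and therefore $d_i(\omega,\vec z)=d_i(\omega',\vec z)$ for all $1\le i\le t_m$.

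Finally I would split into two cases. If the orbit visits $\Omega\times C$ fewer than $m$ times, i.e.\ $t_m=\infty$, then only coins on which $\omega$ and $\omega'$ agree are ever used, so the two digit sequences are identical and $\preceq$ holds with equality. Otherwise $t_m<\infty$, and $K_\beta^{t_m}(\omega,\vec z)$ lies in $\Omega\times C_{ij}$ for some $ij\in\{01,12,02\}$. At this step the coin for $(\omega,\vec z)$ is $\omega_m=0$, producing digit $\vec{q}_i$, while the coin for $(\omega',\vec z)$ is $\omega'_m=1$, producing digit $\vec{q}_j$; here Lemma~\ref{lem w1} guarantees these are precisely the two admissible choices. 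Since $i<j$ gives $\vec{q}_i<\vec{q}_j$, we obtain $d_{t_m+1}(\omega,\vec z)<d_{t_m+1}(\omega',\vec z)$ while all earlier digits agree, so by the definition of the lexicographical order the sequences satisfy $\prec$, hence $\preceq$. The only genuinely delicate point is the indexing claim that the $j$-th coin flip uses $\omega_j$ together with the finite-visit edge case; once these are pinned down the argument is routine, and I expect no real obstacle beyond this bookkeeping.
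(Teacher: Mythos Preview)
Your proposal is correct and follows precisely the same route as the paper: the paper's proof simply says ``This result is easier to prove than Theorem~\ref{3.1} using the same method,'' and your argument is exactly the simplified version of that proof obtained by dropping the $C_{012}$/$\Upsilon$ coordinate and working only with the visit times $t_j$ to $\Omega\times C$. The invocation of Lemma~\ref{lem w1} is not strictly needed (the digit assignment on $C_{ij}$ comes directly from the definition of $K_\beta$), but it does no harm.
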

\begin{proof}
This result is easier to prove than Theorem \ref{3.1} using the same method.
\end{proof}
\begin{theorem}\label{thm w1}
For $\beta\in(3/2,\beta^*]$.
Let $\vec{z}\in S_\beta$ and $\vec{z}=\sum_{i=1}^\infty\frac{a_i}{\beta^i}$ with $a_i\in\{\vec{q}_0,\vec{q}_1,\vec{q}_2\}$ be a representation of $\vec{z}$ in base $\beta.$ Then there exists an  $\omega\in\Omega$ such that $a_i=d_i(\omega,\vec{z})$.
\end{theorem}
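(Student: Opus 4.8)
The plan is to adapt the proof of Theorem \ref{thm w} to the present setting, exploiting the essential simplification that for $\beta\in(3/2,\beta^*]$ there is no triple-overlap region $C_{012}$: only the two-sided coin encoded by $\Omega$ is required, and the auxiliary space $\Upsilon$ (together with the counter $l_n$) disappears entirely. Accordingly, I would first set $\vec{z}_n=\sum_{i=1}^\infty a_{i+n-1}\beta^{-i}$, so that $\vec{z}_1=\vec{z}$ and $\vec{z}_{n+1}=\beta\vec{z}_n-a_n$, and let
$$k_n(\vec{z})=\sum_{i=1}^n\mathbf{1}_C(\vec{z}_i)$$
record the number of visits of the orbit to the switch region $C$ up to time $n$.

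Next I would construct, by induction on $n$, a nested sequence of cylinders $\Omega_n\subseteq\cdots\subseteq\Omega_1\subseteq\Omega$, where $\Omega_n$ has length $k_n(\vec{z})$, with the property that $d_i(\omega,\vec{z})=a_i$ for $1\leq i\leq n$ and every $\omega\in\Omega_n$ (a cylinder of length $0$ being the whole space $\Omega$). The inductive step is governed entirely by Lemma \ref{lem w1}. If $\vec{z}_{n+1}\in E_i$, then $a_{n+1}=\vec{q}_i$ is forced and $k_{n+1}(\vec{z})=k_n(\vec{z})$, so I keep $\Omega_{n+1}=\Omega_n$. If instead $\vec{z}_{n+1}\in C_{ij}$ with $ij\in\{01,12,02\}$, then $k_{n+1}(\vec{z})=k_n(\vec{z})+1$ and $a_{n+1}\in\{\vec{q}_i,\vec{q}_j\}$; I then fix the $k_{n+1}$-th coordinate of $\omega$ by setting $\Omega_{n+1}=\{\omega\in\Omega_n:\omega_{k_{n+1}}=0\}$ if $a_{n+1}=\vec{q}_i$ and $\Omega_{n+1}=\{\omega\in\Omega_n:\omega_{k_{n+1}}=1\}$ if $a_{n+1}=\vec{q}_j$. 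By the definition of $K_\beta$ and of the digit map $d_1$ in this section, this choice guarantees $d_{n+1}(\omega,\vec{z})=a_{n+1}$ for all $\omega\in\Omega_{n+1}$, completing the induction.

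Finally, I would argue that $\cap_n\Omega_n$ is non-empty: if $K_\beta$ hits $\Omega\times C$ infinitely often then $k_n(\vec{z})\to\infty$ and the intersection shrinks to a single point, while if it does so only finitely often the lengths $k_n(\vec{z})$ stabilize and $\cap_n\Omega_n$ is a full cylinder; in either case any $\omega\in\cap_n\Omega_n$ satisfies $d_i(\omega,\vec{z})=a_i$ for all $i\geq1$. The only genuinely substantive point is the two-choice digit structure on the switch regions $C_{ij}$ valid even inside the hole-covered parts of $\tilde{C}_{ij}$; but this has already been absorbed into Lemma \ref{lem w1}, so I expect no serious obstacle, and the construction is a routine one-coin simplification of the proof of Theorem \ref{thm w}.
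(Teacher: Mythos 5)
Your proposal is correct and is essentially identical to the paper's argument: the paper simply states that the proof follows from Lemma \ref{lem w1} by the same method as Theorem \ref{thm w}, and your write-up carries out exactly that one-coin adaptation (dropping $\Upsilon$ and the counter $l_n$, keeping the nested cylinders indexed by $k_n$). No gaps.
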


\begin{proof}
Using Lemma \ref{lem w1}, the proof is similar with Theorem \ref{thm w}.
\end{proof}

\subsection{Unique $K_\beta$-invariant measure of maximal entropy.} On the set $\Omega\times S_\beta$ we consider the product $\sigma$-algebra $\mathcal{A\times S}$. Recall the function
$\rho_2: \{\vec{q}_0,\vec{q}_1,\vec{q}_2\}^\mathbb{N} \rightarrow \Upsilon$ which is given by
$$\rho_2(\vec{q}_{b_1},\vec{q}_{b_2},\vec{q}_{b_3},\ldots)=(b_1,b_2,b_3,\ldots).$$
Define the function
$\rho: \Omega \times S_\beta\rightarrow \{\vec{q}_0,\vec{q}_1,\vec{q}_2\}^\mathbb{N}$ by
$$\rho_1(\omega,\vec{z})=(d_1(\omega,\vec{z}), d_2(\omega,\vec{z}),\ldots).$$
Denote by $\varphi=\rho_2\circ\rho_1$ the function from $\Omega \times S_\beta$ to $\Upsilon$.
It is easy to check that $\varphi$ is measurable and $\varphi\circ K_\beta=\sigma'\circ \varphi $. It follows from Theorem \ref{thm w1} that $\varphi$ is surjective.

Let
\begin{align*}
Z&=\{(\omega,\vec{z})\in\Omega\times S_\beta: K_\beta^n(\omega,\vec{z})\in\Omega\times C \text{ infinitely often}\},\\
D&=\{(b_1,b_2,\ldots)\in \Upsilon: \sum_{i=1}^\infty\frac{\vec{q}_{b_{j+i-1}}}{\beta^i}\in C \text{ for infinitely many } j\text{'s}\}.
\end{align*}
Then $\varphi(Z)=D, K_\beta^{-1}(Z)=Z$ and $(\sigma')^{-1}(\Upsilon)=\Upsilon.$ Let $\varphi'$ be the restriction of the map $\varphi$ on $Z$.

\begin{lemma}\label{bi}
The map $\varphi':Z\rightarrow D$ is a bimeasurable bijection.
\end{lemma}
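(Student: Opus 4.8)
The plan is to mirror the proof of Lemma~\ref{bi'}, constructing an explicit inverse for $\varphi'$; the situation is in fact simpler here because the triple overlap $C_{012}$ has disappeared and only the binary coin $\omega$ is needed. First I would record that $Z$ is Borel: since $K_\beta$ is measurable,
$$Z=\bigcap_{n=1}^\infty\bigcup_{m=n}^\infty K_\beta^{-m}(\Omega\times C)$$
is a Borel subset of the Polish space $\Omega\times S_\beta$, and $\varphi$ (hence its restriction $\varphi'$) is Borel measurable, as already noted in the text.

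Next I would build $(\varphi')^{-1}$ on $D$ by hand. Given $(b_1,b_2,\ldots)\in D$, set $\vec{z}=\sum_{i=1}^\infty\vec{q}_{b_i}\beta^{-i}$, which lies in $S_\beta$ as a convergent $\beta$-representation, and define the successive return times to the switch region,
$$r_1=\min\Big\{j\ge 1:\sum_{i=1}^\infty\tfrac{\vec{q}_{b_{j+i-1}}}{\beta^i}\in C\Big\},\qquad r_k=\min\Big\{j>r_{k-1}:\sum_{i=1}^\infty\tfrac{\vec{q}_{b_{j+i-1}}}{\beta^i}\in C\Big\}.$$
Because $(b_1,b_2,\ldots)\in D$, all the $r_k$ exist. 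At each $r_k$ the shifted tail lies in some $C_{ij}$, so Lemma~\ref{lem w1}(ii) forces $b_{r_k}\in\{i,j\}$; I then put $\omega_k=0$ if $b_{r_k}=i$ and $\omega_k=1$ if $b_{r_k}=j$. This determines $\omega=(\omega_1,\omega_2,\ldots)\in\Omega$ uniquely, and I set $(\varphi')^{-1}(b_1,b_2,\ldots)=(\omega,\vec{z})$.

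Then I would verify that the two compositions are the identity. Since the $K_\beta$-orbit of $(\omega,\vec{z})$ meets $\Omega\times C$ exactly at the times $r_k$, which recur infinitely often, $(\omega,\vec{z})\in Z$. Tracking the digits: at every non-switch time the tail lies in some $E_i$ and Lemma~\ref{lem w1}(i) forces the digit to be $\vec{q}_{b_j}$, while at each switch time $r_k$ the prescribed $\omega_k$ reproduces exactly $\vec{q}_{b_{r_k}}$; hence $\rho_1(\omega,\vec{z})=(\vec{q}_{b_1},\vec{q}_{b_2},\ldots)$ and $\varphi'(\omega,\vec{z})=(b_1,b_2,\ldots)$, giving surjectivity and $\varphi'\circ(\varphi')^{-1}=\mathrm{id}$. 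For injectivity, if $\varphi'(\omega,\vec{z})=\varphi'(\omega',\vec{z}')$ then the digit sequences coincide, so $\vec{z}=\sum_i d_i\beta^{-i}=\vec{z}'$; moreover, on $Z$ the switch times recur infinitely often and each $\omega_k$ is forced by $b_{r_k}$, so $\omega=\omega'$.

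The only nonroutine point is the measurability of the inverse, and it is supplied directly by Theorem~\ref{polish}: $\varphi'$ is a one-to-one Borel map from the Borel set $Z$ of the Polish space $\Omega\times S_\beta$ into the Polish space $\Upsilon$, so $\varphi'(Z)=D$ is Borel and $(\varphi')^{-1}$ is Borel measurable, making $\varphi'$ a bimeasurable bijection. I expect no serious obstacle beyond carefully invoking Lemma~\ref{lem w1} to see that the binary choice at each switch time is the only freedom in the coding, so that $\omega$ is genuinely determined by the digit sequence; everything else is the verbatim simplification of Lemma~\ref{bi'} with the $\Upsilon$-coordinate and the region $C_{012}$ removed.
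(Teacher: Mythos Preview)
Your proposal is correct and follows essentially the same approach as the paper: construct the inverse by reading off $\vec z$ from the digit sequence, define the return times $r_k$ to $C$, recover $\omega_k$ from which of the two admissible digits $b_{r_k}$ equals, and then invoke Theorem~\ref{polish} for bimeasurability. Your write-up is in fact more careful than the paper's (you explicitly check $Z$ is Borel and verify both compositions are the identity), but the underlying argument is the same simplification of Lemma~\ref{bi'} with the $\Upsilon$-coordinate removed.
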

\begin{proof}
For any sequence $(b_1,b_2,\ldots)\in D$, we can get $\vec{z}=\sum_{i=1}^\infty\frac{\vec{q}_{b_i}}{\beta^i}$.
To get $\omega$, define
$$r_1=\min\{j\geq1:\sum_{i=1}^\infty\frac{\vec{q}_{b_{j+i-1}}}{\beta^i}\in C\},\ \
r_k=\min\{j>r_{k-1}:\sum_{i=1}^\infty\frac{\vec{q}_{b_{j+i-1}}}{\beta^i}\in C\}.$$
If $b_{r_k}\in\{0,2\}$, let $\omega_k=b_{r_k}/2$.
If $b_{r_k}=1$ and $\sum_{i=1}^\infty\frac{\vec{q}_{b_{r_k+i-1}}}{\beta^i}\in C_0$, let $\omega_k=1$.
If $b_{r_k}=1$ and $\sum_{i=1}^\infty\frac{\vec{q}_{b_{r_k+i-1}}}{\beta^i}\in C_1$, let $\omega_k=0$.
Define $(\varphi')^{-1}:\Upsilon'\rightarrow Z$ by
$$(\varphi')^{-1}((b_1,b_2,\ldots))=(w,\vec{z}).$$
$(\varphi')^{-1}$ is measurable by Theorem \ref{polish}, and is the inverse of $\varphi'$.
\end{proof}

\begin{lemma}\label{fullmeasure}
$\mathbb{P}(D)=1$, where $\mathbb{P}$ is the uniform product measure.
\end{lemma}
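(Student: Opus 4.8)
The plan is to reproduce the block–Borel–Cantelli argument of Lemma~\ref{m}, the only genuinely new feature being that in the range $(3/2,\beta^*]$ the switch regions carry holes, so landing a tail in $\tilde C_{01}$ no longer suffices: one must land it in $C_{01}=\tilde C_{01}\setminus\bigl(\cup_n f_{\vec q_0}f_{\vec q_1}^n(H)\cup\cup_n f_{\vec q_1}f_{\vec q_0}^n(H)\bigr)$. Concretely, it is enough to exhibit a finite word $\boldsymbol b$ of length $L$ such that every sequence whose first $L$ digits equal $\boldsymbol b$ codes a point of $C$. Granting this, the set $D''$ of sequences in which $\boldsymbol b$ occurs at infinitely many positions satisfies $D''\subset D$, and partitioning $\Upsilon$ into consecutive blocks of length $L$ (each independently equal to $\boldsymbol b$ with probability $3^{-L}$) yields $\mathbb P(D'')=1$ by the same disjoint-block computation as in Lemma~\ref{m}, whence $\mathbb P(D)=1$.

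For the word I would take $\boldsymbol b=1\underbrace{0\cdots0}_{L-1}$ and consider $\vec z=\tfrac{\vec q_1}{\beta}+\sum_{i=2}^{L}\tfrac{\vec q_0}{\beta^i}+\sum_{i>L}\tfrac{\vec q_{b_{i-L}}}{\beta^i}$ with an arbitrary tail. The leading $\vec q_1$ gives $x\ge 1/\beta$, while the run of zeros and the bounded tail give $y\le\tfrac{1}{\beta^{L}(\beta-1)}$ and $x+y\le\tfrac1\beta+\tfrac{1}{\beta^{L}(\beta-1)}$. Since $\beta<2$ forces $\tfrac1\beta<\tfrac{1}{\beta(\beta-1)}$, for $L$ large the positive gap $\tfrac{2-\beta}{\beta(\beta-1)}$ absorbs the tail and $\vec z\in\tilde C_{01}$; this is verbatim the $C_{01}$ estimate of Lemma~\ref{m}.

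The heart of the matter is upgrading $\vec z\in\tilde C_{01}$ to $\vec z\in C_{01}$, i.e. excluding the two hole families. The family $f_{\vec q_1}f_{\vec q_0}^n(H)$ is dispatched by Lemma~\ref{lem w1}: each of its points has first digit forced to $\vec q_0$, whereas $\vec z$ admits a representation with first digit $\vec q_1$, so $\vec z$ lies in none of them. The family $f_{\vec q_0}f_{\vec q_1}^n(H)$ I would exclude geometrically. On $H$ one has $y>\tfrac{1}{\beta(\beta-1)}-x>\tfrac{2-\beta}{\beta(\beta-1)}>0$, so each $f_{\vec q_0}f_{\vec q_1}^n(H)$ is bounded away from the $x$-axis (hence its closure omits $(1/\beta,0)$), and by Proposition~\ref{structure} these holes contract toward the fixed-point corner $\bigl(\tfrac{1}{\beta(\beta-1)},0\bigr)$, so their union accumulates only there. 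A short compactness argument (a sequence of hole points converging to $(1/\beta,0)$ would force either $(1/\beta,0)\in\overline{f_{\vec q_0}f_{\vec q_1}^n(H)}$ for some fixed $n$, or convergence to the far corner — both impossible) shows $(1/\beta,0)$ is at positive distance $\varepsilon$ from $\cup_n f_{\vec q_0}f_{\vec q_1}^n(H)$. Since every admissible $\vec z$ lies within $\sqrt2\,\beta^{-L}/(\beta-1)$ of $(1/\beta,0)$ uniformly in the tail, taking $L$ large enough that this is below $\varepsilon$ places $\vec z\in C_{01}\subset C$, as required.

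I expect this last exclusion — the uniform, all-$n$ avoidance of $\cup_n f_{\vec q_0}f_{\vec q_1}^n(H)$ — to be the main obstacle, since it is the one ingredient with no analogue in the hole-free Lemma~\ref{m} and is precisely where the self-similar description of the holes from Proposition~\ref{structure}, together with the strictly positive height of $H$, must be used. Once the neighbourhood $\varepsilon$ is secured, the coordinate estimate and the Borel–Cantelli bookkeeping proceed exactly as before.
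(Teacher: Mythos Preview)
Your argument is correct, and the block Borel--Cantelli bookkeeping is exactly that of the paper, but your choice of prefix forces you into a more laborious hole-avoidance step than necessary. Taking $\boldsymbol b=1\,0^{L-1}$ pins $\vec z$ near $(1/\beta,0)$ on the $x$-axis; this lands in $\tilde C_{01}$ but sits close to the region where the holes $f_{\vec q_0}f_{\vec q_1}^n(H)$ accumulate, so you are driven to split off one family via Lemma~\ref{lem w1} and to handle the other by the two-case compactness argument producing a uniform clearance $\varepsilon$. That works, but it is indirect.

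The paper instead inserts a single $\vec q_2$ at position $3$, taking in effect $\boldsymbol b=1\,0\,2\,0^{L-3}$, so that
\[
\vec z_l=\frac{\vec q_1}{\beta}+\frac{\vec q_2}{\beta^{3}}+\sum_{i>l}\frac{\vec q_{b_{i-l}}}{\beta^{i}}.
\]
One still has $x_l\ge 1/\beta$ and, for $l$ large, $x_l+y_l\le 1/\beta+1/\beta^3+1/(\beta^l(\beta-1))\le 1/(\beta(\beta-1))$ (the strict inequality $1/\beta+1/\beta^3<1/(\beta(\beta-1))$ holds throughout $(3/2,\beta^*]$), so $\vec z_l\in\tilde C_{01}$. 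The payoff of the extra digit is that now $y_l\ge 1/\beta^{3}$, whereas every point of $\bigcup_{n\ge1}\bigl(f_{\vec q_0}f_{\vec q_1}^n(H)\cup f_{\vec q_1}f_{\vec q_0}^n(H)\bigr)$ satisfies $y<1/\beta^{3}$: on $H$ one has $y<1/\beta$, and each of the $n+1\ge 2$ maps applied divides the $y$-coordinate by $\beta$. Thus both hole families are excluded by a single coordinate inequality, with no compactness step and no appeal to Lemma~\ref{lem w1}. Your route and the paper's reach the same conclusion; the paper's is shorter because the prefix is chosen so that hole-avoidance is automatic rather than something to be argued for.
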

\begin{proof}
The proof is similar with Theorem \ref{m}. For any sequence $(b_1,b_2,\ldots)\in \Upsilon$ and $l\geq 3$, define
$$\vec{z}_l=(x_l,y_l)=\frac{\vec{q}_1}{\beta}+\frac{\vec{q}_2}{\beta^3}+\frac{\vec{q}_{b_1}}{\beta^{l+1}}+\frac{\vec{q}_{b_2}}{\beta^{l+2}}+\cdots.$$
Since $\beta\in[3/2,\beta^*)$ and $\lim_{n\rightarrow 0}\frac{1}{\beta^n(\beta-1)}=0$, there exsits $L>0$ such that for any $l\geq L$,
\begin{align*}\label{y1}
&x_l\geq \frac{1}{\beta},\ \ \ \ y_l\geq \frac{1}{\beta^3},\\
&x_l+y_l\leq\frac{1}{\beta}+\frac{1}{\beta^3}+\frac{1}{\beta^n(\beta-1)}\leq\frac{1}{\beta(\beta-1)}.
\end{align*}
It follows that $(x_l,y_l)\in \tilde{C}_{01}$. Notice that for any point $(x,y)\in \cup_{n=1}^\infty f_{\vec{q}_0}f_{\vec{q}_1}^n(H)\bigcup\cup_{n=1}^\infty f_{\vec{q}_1}f_{\vec{q}_0}^n(H)$, we have $y<1/\beta^3$, which implies that $(x_l,y_l)\in C_{01}\subset C$ for any $l\geq L$. Let
\begin{equation*}
D'=\{(b_1,b_2,\ldots)\in \Upsilon: b_jb_{j+1}\ldots b_{j+L-1}=1\underbrace{00\ldots 0}_{N-1 \text{ times }} \text{ for infinitely many } j\text{'s}\},
\end{equation*}
then $D'\subset D$. Since $\mathbb{P}(D')=1$, we have $\mathbb{P}(D)=1$.
\end{proof}

Now, consider the $K_\beta$-invariant measure
$\nu_\beta$ defined on
$\mathcal{A}\times \mathcal{S}$ by
$\nu_\beta (A)=\mathbb{P}(\varphi(Z \cap A))$. We have the following theorem.
\begin{theorem}
Let $3/2<\beta\leq\beta^*$. The dynamical systems
$(\Omega\times S_\beta, \mathcal{A}\times \mathcal{S},\nu_\beta, K_\beta)$
and
$(\Upsilon,\mathcal{B},\mathbb{P},\sigma')$
 are isomorphic. Moreover, the measure $\nu_\beta$ is the unique $K_\beta$-invariant measure of maximal entropy.
\end{theorem}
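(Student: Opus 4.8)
The plan is to follow the two-part strategy of Section~4, namely the isomorphism argument behind Theorem~\ref{4.6} together with the entropy estimates of Lemmas~\ref{entropy3} and~\ref{max entropy}, which here simplifies considerably: for $3/2<\beta\le\beta^*$ there is no triple-overlap region $C_{012}$, so a single coin space $\Omega$ records every choice and the set $Z$ is controlled by visits to $\Omega\times C$ alone. For the isomorphism I would combine Lemma~\ref{bi} and Lemma~\ref{fullmeasure} exactly as Theorem~\ref{4.6} is deduced. By construction $\varphi$ is measurable, surjective (Theorem~\ref{thm w1}) and satisfies $\varphi\circ K_\beta=\sigma'\circ\varphi$, while $K_\beta^{-1}(Z)=Z$; hence $\varphi'=\varphi|_Z:Z\to D$ is a bimeasurable bijection intertwining $K_\beta|_Z$ with $\sigma'|_D$. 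Since $\nu_\beta(A)=\mathbb{P}(\varphi(Z\cap A))$ we get $\nu_\beta(Z)=\mathbb{P}(\varphi(Z))=\mathbb{P}(D)=1$, so that $Z$ carries full $\nu_\beta$-measure and $D$ full $\mathbb{P}$-measure. Therefore $\varphi'$ is the desired measurable isomorphism between $(\Omega\times S_\beta,\mathcal{A}\times\mathcal{S},\nu_\beta,K_\beta)$ and $(\Upsilon,\mathcal{B},\mathbb{P},\sigma')$, and in particular $h_{\nu_\beta}(K_\beta)=h_{\mathbb{P}}(\sigma')=\log 3$.

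For maximality and uniqueness I would show that any $K_\beta$-invariant measure $\mu\ne\nu_\beta$ has $h_\mu(K_\beta)<\log 3$. Both $Z$ and $Z^c$ are $K_\beta$-invariant Borel sets, so writing $\alpha=\mu(Z^c)$ I decompose $\mu=(1-\alpha)\mu_Z+\alpha\mu_{Z^c}$ into the normalized restrictions to $Z$ and $Z^c$, which gives the affine identity $h_\mu(K_\beta)=(1-\alpha)h_{\mu_Z}(K_\beta)+\alpha h_{\mu_{Z^c}}(K_\beta)$, just as in Lemma~\ref{max entropy}. On $Z$ the isomorphism $\varphi'$ identifies $\mu_Z$ with the $\sigma'$-invariant measure $\mu_Z\circ(\varphi')^{-1}$ supported on $D\subseteq\Upsilon$; since $\mathbb{P}$ is the unique measure of maximal entropy of the full shift $(\Upsilon,\sigma')$, we obtain $h_{\mu_Z}(K_\beta)\le\log 3$, with equality forcing $\mu_Z\circ(\varphi')^{-1}=\mathbb{P}$ and hence (as $\nu_\beta\circ(\varphi')^{-1}=\mathbb{P}$) $\mu_Z=\nu_\beta$.

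The main obstacle is the strict bound on $Z^c$, i.e. proving $h_{\mu_{Z^c}}(K_\beta)<\log 3$ whenever $\alpha>0$. Here I would introduce the absorbing unique-expansion set
$$H_u=\Big\{\vec z=\sum_{i=1}^\infty\frac{\vec q_{b_i}}{\beta^i}\in S_\beta:\ \sum_{i=1}^\infty\frac{\vec q_{b_{j+i-1}}}{\beta^i}\notin C\ \text{for all } j\ge 1\Big\},$$
and observe, as in Lemma~\ref{entropy3}, that $\Omega\times H_u\subseteq K_\beta^{-1}(\Omega\times H_u)$ and $\cup_{i\ge 0}K_\beta^{-i}(\Omega\times H_u)=Z^c$, so $K_\beta$-invariance yields $\mu_{Z^c}(\Omega\times H_u)=1$. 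On $\Omega\times H_u$ the map $K_\beta$ never shifts $\omega$, so it is the product $I_\Omega\times K_\beta'$ with $K_\beta'(\vec z)=\beta\vec z-\vec q_i$ on $H_u$; since $h(I_\Omega)=0$ and the two coordinates are independent, $h_{\mu_{Z^c}}(K_\beta)=h_{\mu'}(K_\beta')$ for the $H_u$-marginal $\mu'$. The coding map $\vec z\mapsto\rho_2(\rho_1(0^\infty,\vec z))$ then gives a measurable isomorphism, invertible by Theorem~\ref{polish}, carrying $(H_u,K_\beta')$ onto the $\sigma'$-invariant subset
$$D_u=\Big\{(b_i)_{i\ge 1}\in\Upsilon:\ \sum_{i=1}^\infty\frac{\vec q_{b_{j+i-1}}}{\beta^i}\notin C\ \text{for all } j\ge 1\Big\}\subseteq\Upsilon.$$

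The crucial point is that a sequence in $D_u$ can never contain the block $1\,0\cdots0$ used in Lemma~\ref{fullmeasure}, since the occurrence of that block at any position forces the corresponding orbit point into $C_{01}\subset C$; hence $D_u$ is disjoint from the full-measure set $D'$ of Lemma~\ref{fullmeasure}, so $\mathbb{P}(D_u)=0$. Consequently $\mu'\circ(\text{coding})^{-1}\ne\mathbb{P}$, and uniqueness of the measure of maximal entropy on the full shift gives $h_{\mu_{Z^c}}(K_\beta)=h_{\mu'\circ(\text{coding})^{-1}}(\sigma')<\log 3$. Feeding this back into the affine decomposition shows $h_\mu(K_\beta)<\log 3$ as soon as $\alpha>0$, while $h_{\nu_\beta}(K_\beta)=\log 3$; therefore $\nu_\beta$ is the unique $K_\beta$-invariant measure of maximal entropy. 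I expect essentially all the difficulty to sit in this last step, namely identifying the absorbing set $H_u$, justifying the product reduction $K_\beta=I_\Omega\times K_\beta'$ at the level of entropy, and verifying that the associated subshift $D_u$ is $\mathbb{P}$-null; the isomorphism statement itself is a direct consequence of Lemmas~\ref{bi} and~\ref{fullmeasure}.
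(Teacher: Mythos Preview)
Your proposal is correct and follows essentially the same approach as the paper. The paper's proof is terse: it invokes Lemmas~\ref{bi} and~\ref{fullmeasure} for the isomorphism, notes that any other measure supported on $Z$ has entropy strictly below $\log 3$, and then simply says that for measures giving positive mass to $Z^c$ the conclusion follows ``by a similar discussion with that of Lemma~\ref{max entropy}.'' You have spelled out precisely that omitted discussion---the absorbing set $H_u$, the product reduction $K_\beta|_{\Omega\times H_u}=I_\Omega\times K_\beta'$, and the fact that the coded subshift $D_u$ is $\mathbb{P}$-null via the block from Lemma~\ref{fullmeasure}---which is exactly the simplified one-component analogue of Lemma~\ref{entropy3} (here there is no $C_{012}$, so $Z_3,Z_4$ disappear and only the $Z_5$-type argument survives).
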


\begin{proof}
It follows from Lemmas \ref{bi} and \ref{fullmeasure} that $\varphi: (\Omega\times S_\beta, \mathcal{A}\times \mathcal{S},\nu_\beta, K_\beta)\rightarrow (\Upsilon,\mathcal{B},\mathbb{P},\sigma') $ is an isomorphism,
which implies that $h_{\nu_\beta}(K_\beta)=h_{\mathbb{P}}(\sigma')=\log 3$ and any other $K_\beta$-invariant measure with support $Z$ has entropy strictly less than $\log 3$.
Let $\mu$ be a  $K_\beta$-invariant measure for which $\mu(Z^c)>0$. By a similar discussion with that of Lemma \ref{max entropy}, we have $h_\mu(K_\beta) < \log 3.$
\end{proof}

\section*{Acknowledgements}
The first and third author were supported by National Natural Science Foundation of China (NSFC) No.~12071148 and Science and Technology Commission of Shanghai Municipality (STCSM)  No.~18dz2271000.
The first author was also supported by China Scholarship Council No.202006140156 during her visit to Utrecht University.


\begin{thebibliography}{1}

\bibitem{BBQ}
Bahsoun W, Bose C and Quas A.
\newblock {\em Deterministic representation for position dependent random maps.}
\newblock Discrete Conti. Dyn. Syst., 22(3):529-540, 2012.

\bibitem{BBD}
Barrionuevo J, Burton R, Dajani K and Kraaikamp C.
\newblock {\em Ergodic properties of generalized Lüroth series.}
\newblock Acta Arithmetica, 74: 311-327, 1996.

\bibitem{BG}
Bahsoun W and G\'ora P.
\newblock {\em Position dependent random maps in one and higher dimensions.}
\newblock Studia Mathematica, 166(3): 271-286, 2005.

\bibitem{GB}
Boyarsky A and G\'ora P.
\newblock {\em Absolutely continuous invariant measures for piecewise expanding
$C^2$ transformations in $\mathbb{R}^N$.}
\newblock Israel Jour. Math., Vol. 67, No. 3, 1989, 272-286.

\bibitem{BG1}
Boyarsky A and G\'ora P.
\newblock {\em Laws of chaos.}
\newblock Birkhauser, 1997.

\bibitem{BMS}
Broomhead D, Montaldi J and Sidorov N.
\newblock {\em Golden Gasket: Variations on the Sierpi\'nski Sieve.}
\newblock Nonlinearity, 17: 455-1480, 2004.
\bibitem{DC}
Dajani K, and Kraaikamp C.
\newblock  {\em Ergodic Theory of Numbers. 1st ed. Vol. 29. }
\newblock  Mathematical Association of America, 2002.

\bibitem{DK2002}
Dajani K and Kraaikamp C.
\newblock {\em From greedy to lazy expansions and their driving dynamics.}
\newblock Expo. Math., 20: 315-327, 2002.

\bibitem{DK2003}
Dajani K and Kraaikamp C.
\newblock {\em Random $\beta$-expansions.}
\newblock Ergod. Th. and Dynam. Sys., 23: 461-479, 2003.

\bibitem{DV2005}
Dajani K and de Vries M.
\newblock {\em Measures of maximal entropy for random $\beta$-expansions.}
\newblock J. Eur. Math. Soc., 7: 51-68, 2005.

\bibitem{DV2007}
Dajani K and de Vries M.
\newblock {\em Invariant densities for random $\beta$-expansions.}
\newblock J. Eur. Math. Soc., 9: 157-176, 2007.

\bibitem{F}
Falconer K.
\newblock {\em Fractal Geometry: Mathematical Foundations and Applications, 3nd Edition.}
\newblock  John Wiley and Sons, Ltd., 2014.

\bibitem{G}
Giusti E.
\newblock {\em Minimal Surfaces and Functions of Bounded Variation.}
\newblock Birkhauser, 1984.



\bibitem{H}
Hochman M.
\newblock {\em On self-similar sets with overlaps and inverse theorems for entropy in $\mathbb{R}^d$.}
\newblock Mem. Am. Math. Soc.,to appear,(arXiv:1503.09043).

\bibitem{HP}
Hasselblatt B and Plante D Jr.
\newblock {\em On the interior of `fat' Sierpi\'nski triangles.}
\newblock Exp. Math., 23: 285-309, 2014.

\bibitem{KL}
Kong D and Li W.
\newblock {\em Critical base for the unique codings of fat Sierpi\'nski gasket.}
\newblock Nonlinearity, 33(9): 4484-4511, 2020.

\bibitem{JP}
Jordan T and Pollicott M.
\newblock {\em Properties of measures supported on fat Sierpinski carpets.}
\newblock Ergod. Theor. Dynam. Syst., 26: 739-54, 2006.



\bibitem{SS}
Simon K and Solomyak B.
\newblock {\em On the dimension of self-similar sets.}
\newblock Fractals, 10: 59-65, 2002.

\bibitem{S2007}
Sidorov N.
\newblock {\em Combinatorics of linear iterated function systems with overlaps.}
\newblock Nonlinearity, 20: 1299-312, 2007.

\bibitem{SSM}
Srivastava S M.
\newblock {\em A course on Borel sets.}
\newblock  Graduate Texts in Mathematics, 180, Springer, New
York, 1998.



\end{thebibliography}
\end{document}